\theoremstyle{plain}
\newtheorem{theorem}{Theorem}
\newtheorem{lemma}[theorem]{Lemma}
\newtheorem{proposition}[theorem]{Proposition}
\newtheorem{corollary}[theorem]{Corollary}
\newtheorem{conjecture}[theorem]{Conjecture}
\theoremstyle{definition}
\newtheorem{remark}[theorem]{Remark}
\newtheorem{definition}[theorem]{Definition}
 \numberwithin{equation}{section}
 \numberwithin{theorem}{section}
\DeclareMathOperator{\End}{End}
\DeclareMathOperator{\Hom}{Hom}
\DeclareMathOperator{\Rep}{Rep}
\DeclareMathOperator{\ev}{ev}
\DeclareMathOperator{\coev}{coev}
\DeclareMathOperator{\CE}{CE}
\DeclareMathOperator{\CF}{CF}
\DeclareMathOperator{\Gr}{Gr}
\DeclareMathOperator{\Irr}{Irr}
\newcommand{\funF}{\mathcal{F}}
\newcommand{\oneL}{1_{\Lambda}}
\newcommand\be            {\begin{equation}}
\newcommand\ee            {\end{equation}}
\newcommand{\ot}{\otimes}
\newcommand\SF{\mathcal{S}\hspace{-.65pt}\mathcal{F}}
\newcommand{\K}{\kappa}
\newcommand{\tildechi}{\phi}
\newcommand{\Rad}{\rho}
\newcommand{\muldiag}{\hspace{3pt}\begin{picture}(-1,1)(-1,-3)\circle*{4}\end{picture}\hspace{6pt}}
\newcommand\eps           {\varepsilon}
\newcommand\id            {id}
\newcommand\Id            {I\hspace{-1pt}d}
\newcommand\one           {{\bf1}}
\newcommand\Cb            {\mathbb{C}}
\newcommand\Zb            {\mathbb{Z}}
\newcommand\Cc            {\mathcal{C}}
\newcommand\Ec            {\mathcal{E}}
\newcommand\Gc            {\mathcal{G}}
\newcommand\Zc            {\mathcal{Z}}
\newcommand\h            {\mathfrak{h}}
\newcommand\VOA			{\mathcal{V}}
\newcommand{\Vect}{{\cal V}{\it ect}}
\newcommand{\sVect}{{\it s}{\cal V}{\it ect}}
\begin{document}

\thispagestyle{empty}
\def\thefootnote{\fnsymbol{footnote}}
\begin{flushright}
ZMP-HH/16-8\\
Hamburger Beitr\"age zur Mathematik 593
\end{flushright}
\vskip 3em
\begin{center}\LARGE
The non-semisimple Verlinde formula\\ 
and pseudo-trace functions
\end{center}

\vskip 2em
\begin{center}
{\large 
Azat M. Gainutdinov\,$^{a,b,c}$~~and~~Ingo Runkel\,$^a$~~\footnote{Emails: {\tt azat.gainutdinov@uni-hamburg.de}, {\tt ingo.runkel@uni-hamburg.de}}}
\\[1.5em]
{\sl\small $^a$ Fachbereich Mathematik, Universit\"at Hamburg\\
Bundesstra\ss e 55, 20146 Hamburg, Germany}\\[0.5em]
{\sl\small $^b$ DESY, Theory Group,\\ Notkestra\ss e 85, 22603 Hamburg,
Germany}\\[0.5em]
{\sl\small $^c$ Laboratoire de Math\'ematiques et Physique Th\'eorique CNRS,\\
Universit\'e de Tours,
Parc de Grammont, 37200 Tours, 
France}
\end{center}

\vskip 2em
\begin{center}
  May 2016
\end{center}
\vskip 2em

\begin{abstract}
Using results of Shimizu on internal characters we prove a useful non-semisimple variant of the categorical Verlinde formula for factorisable finite tensor categories.
Conjecturally, examples of such categories are given by the representations $\Rep\VOA$ of a vertex operator algebra $\VOA$ subject to certain finiteness conditions.
Combining this with results on pseudo-trace functions by Miyamoto and Arike-Nagatomo, one can make a precise conjecture for a non-semisimple modular Verlinde formula which relates modular properties of pseudo-trace functions for $\VOA$ 
and the product in the Grothendieck ring of $\Rep\VOA$.
We test this conjecture in the example of
the vertex operator algebra of
$N$ pairs of symplectic fermions
by explicitly computing the modular $S$-transformation of the pseudo-trace functions.
\end{abstract}

\setcounter{footnote}{0}
\def\thefootnote{\arabic{footnote}}

\newpage

\tableofcontents

\section{Introduction}

One of the more surprising outcomes of the theory of vertex operator algebras (VOAs) is the Verlinde formula for VOAs with semisimple representation theory (and some additional properties) \cite{Verlinde:1988sn,Huang:2004bn}. It relates the behaviour of characters of the VOA under the modular $S$-transformation with the fusion tensor product of irreducible representations.

\medskip

As explained clearly
in \cite{Fuchs:2006nx}, there are several aspects to the Verlinde formula. We will focus on the categorical aspect and the modular aspect.

\medskip

\noindent
{\em Categorical Verlinde formula:} Let $k$ be an algebraically closed field of characteristic zero and let $\Cc$ be a $k$-linear finite braided tensor category.
Assume further that $\Cc$ is {\em factorisable}, which is a non-degeneracy condition on the braiding (see Section \ref{sec:non-deg}). 

Let now $\Cc$ be in addition semisimple 
-- $\Cc$ is then called a {\em modular tensor category}
(to be precise, `modular' also requires ribbon).
A key application of such categories is that they are the defining data of a certain type of three-dimensional topological field theory (3d\,TFT) \cite{retu,tur,Bartlett:2015baa}. From this point of view, it is not too surprising that certain Hom-spaces of $\Cc$ carry a mapping class group action (actually, maybe less obviously, a projective such action). In particular, for
\be
	L ~:= \bigoplus_{U \in \Irr(\Cc)} U^* \ot U ~~ \in~ \Cc\ ,
\ee
where $\Irr(\Cc)$ is a choice of representatives of the isomorphism classes of simple objects in $\Cc$,
the space $\Cc(\one,L)$ carries a projective $SL(2,\Zb)$-action. 
$\Cc(\one,L)$ has a preferred basis given by the coevaluation maps $\chi_U := \widetilde\coev_U : \one \to U^* \ot U$. The generator $S = \big( \begin{smallmatrix} 0 & -1 \\ 1 & 0 \end{smallmatrix} \big)$ of $SL(2,\Zb)$ is represented by
\be\label{eq:intro-SC}
	S_\Cc(\chi_U) = \big(\mathrm{Dim}\, \Cc \big)^{-\tfrac12} \hspace{-.3em} \sum_{X \in \Irr(\Cc)} \hspace{-.3em} s_{U^*,X}\, \chi_X  
	\qquad , \quad
	\mathrm{Dim}\,\Cc = \hspace{-.3em}\sum_{X \in \Irr(\Cc)} \hspace{-.3em}s_{\one,X}
	 \ ,
\ee
where $s_{A,B} \in k$ is the categorical trace of the double-braiding $c_{B,A} \circ c_{A,B}$. In the application of ribbon categories to link invariants, $s_{A,B}$ is the invariant of the Hopf-link with appropriate orientations and where the two ribbon circles are labelled $A$ and $B$, respectively. 

The Grothendieck ring $\Gr(\Cc)$ of $\Cc$ is freely generated as a $\Zb$-module by the classes $[U]$, $U \in \Irr(\Cc)$. The tensor product induces a ring structure with structure constants
\be
	[U] \cdot [V] = \sum_{W \in \Irr(\Cc)} N_{UV}^{~W} \, [W] 
	 \ .
\ee
	The structure constants of $\Gr(\Cc)$ are also called {\em fusion rules} of $\Cc$.
The categorical Verlinde formula for a modular tensor category $\Cc$ states
\be\label{eq:intro-ver-orig}
	N_{UV}^{~W} ~=~ \frac{1}{\mathrm{Dim} \,\Cc }\sum_{X \in \Irr(\Cc)} \frac{s_{U,X}s_{V,X}s_{W^*,X}}{s_{\one,X}} \ .
\ee
This algebraic result is relatively straightforward to prove, see e.g.\ \cite[Thm.\,4.5.2]{tur}. 

In Section \ref{sec:GrVer} we recall an isomorphism between $\Cc(\one,L)$ and 
the space of endomorphisms of the identity functor $\End(\Id_\Cc)$ \cite{Lyubashenko:1995}.
Transporting the algebra structure of $\End(\Id_\Cc)$ to $\Cc(\one,L)$ results in the product $\chi_U \muldiag \chi_V = \delta_{U,V} (\mathrm{Dim}\Cc)^{\frac12}/s_{\one,U} \cdot \chi_U$.
One can rewrite \eqref{eq:intro-ver-orig} as
\be\label{eq:intro-ver}
	S_\Cc^{-1}\big( \, S_\Cc(\chi_U) \muldiag S_\Cc(\chi_V)\, \big)
	~= \hspace{-.3em}
	\sum_{W \in \Irr(\Cc)} \hspace{-.5em} N_{U,V}^{~W} \, \chi_W \ .
\ee

Results of Shimizu \cite{Shimizu:2015} 
imply that \eqref{eq:intro-ver} still holds if we drop the semisimplicity requirement from $\Cc$ (Theorem \ref{thm:ver}). All that changes is that $L$ is generalised to the coend $L = \int^{X \in \Cc} X^* \ot X$, and that the action of $S$ on $\Cc(\one,L)$ is defined in terms of a non-degenerate Hopf-pairing and an integral on $L$ \cite{Lyubashenko:1995}, see Section \ref{sec:GrVer}.

\medskip

\noindent
{\em Modular Verlinde formula:} 
Let $\VOA$ be a VOA which is $C_2$-cofinite, simple as a $\VOA$-module, isomorphic to the contragredient module $\VOA'$, and non-negatively graded with $\VOA_0 = \Cb \one$. For $\VOA$ one can define a space of torus one-point functions $C_1(\VOA)$, which is a subspace of the space of functions $\VOA \times \mathbb{H} \to \Cb$ which are linear in $\VOA$ and holomorphic on the upper half plane $\mathbb{H}$. $C_1(\VOA)$ is finite dimensional and invariant under modular transformations \cite{Zhu1996}.

If $\VOA$ is in addition rational (and so $\Rep(\VOA)$ is semisimple), Zhu proved that $C_1(\VOA)$ has a basis given by the characters $\hat\chi_U$ of irreducible $\VOA$-modules. Consequently, there is a unique matrix $\mathbb{S}_{U,X}$ such that the image $\hat\chi_U|_S$ of the character $\hat\chi_U$ under the modular $S$-transformation is
\be\label{eq:ssi-irred-char-S}
	\hat\chi_U|_S ~= 
	\hspace{-1em}
	\sum_{X \in \Irr(\Rep(\VOA))} \hspace{-1em} \mathbb{S}_{U,X} \,  
	\, \hat\chi_X \ .
\ee 
In the modular Verlinde formula, one defines the map $S_\VOA$ in the same way as $S_\Cc$ in \eqref{eq:intro-SC}, but with $\mathbb{S}_{U^*,X}$ instead of $(\mathrm{Dim} \Cc)^{-1/2} \, s_{U^*,X}$. The claim then is that \eqref{eq:intro-ver}, with $S_\VOA$ in place of $S_\Cc$, computes the structure constants of the Grothendieck ring $\Gr(\Rep(\VOA))$.
To prove the modular Verlinde formula in this generality (rational VOAs with the above properties) is hard and was achieved by Huang in \cite{Huang:2004bn}.

\medskip

Conceptually, the relation between the categorical and modular Verlinde formula can be understood as follows. $\Rep(\VOA)$ is a modular tensor category and the topological modular functor defined by the 3d\,TFT obtained from $\Rep(\VOA)$ should in a suitable sense be equivalent to the complex modular functor obtained from the conformal blocks associated to $\VOA$. While in this generality, the equivalence of modular functors is an open problem, it is known that the categorical and the modular $SL(2,\Zb)$-actions
agree \cite{Huang2005}.

\medskip

Let us now drop the rationality requirement, so that $\Rep(\VOA)$ is not necessarily semisimple (but still finite abelian, see Section \ref{sec:voa}). A version of 3d\,TFTs for not necessarily semisimple factorisable finite ribbon categories was given in \cite{Kerler:2001}. 
Combining this with results on internal characters \cite{Shimizu:2015} (see Section \ref{sec:GrVer}) and with pseudo-trace functions  \cite{Miyamoto:2002ar,Arike:2011ab} (see Section \ref{sec:ptfun}) leads to a precise conjecture on how the modular Verlinde formula might generalise to such non-semisimple situations. This is detailed in Theorem \ref{thm:main} (which is based on conjectures given there).

The generalisation states that -- conjecturally -- the product in $\Gr(\Rep(\VOA))$ can be computed by the following procedure. (It is not a priori clear why the maps appearing below exist. This is part of the conjecture, see Section \ref{sec:voa} for details.)
\begin{enumerate}
\item Compute the irreducible $\VOA$-modules $U \in \Irr(\Rep(\VOA))$ and choose a projective generator $G$ of $\Rep(\VOA)$.

\item The pseudo-trace functions for $G$ are parametrised by central forms $C(\End_\VOA(G))$ on $\End_\VOA(G)$.
Denote by $\varphi_U \in C(\End_\VOA(G))$, $U \in\Irr(\Rep(\VOA))$, the
elements corresponding to the irreducible characters $\hat\chi_U$. The $\varphi_U$ are expressed as traces over irreducible $\End_\VOA(G)$-modules in \eqref{eq:varphiM-via-tr} below.
Compute the endomorphism $S_\VOA$
-- given by the modular $S$-transformation -- on the linear span of the $\varphi_U$.

\item The modular transformation of the vacuum character can be used to define  a linear isomorphism between $\End(\Id_{\Rep(\VOA)})$ and $C(\End_\VOA(G))$. Use this isomorphism to transport the algebra structure of $\End(\Id_{\Rep(\VOA)})$ to $C(\End_\VOA(G))$ and denote the resulting product 
	by `$\muldiag$'.
\item For $A,B \in \Irr(\Rep(\VOA))$ compute the unique constants $N_{AB}^{~C}$ in
\be
	S_\VOA^{-1}\big( \, S_\VOA(\varphi_A) \muldiag S_\VOA(\varphi_B) \, \big)
	~= \hspace{-1em}
	\sum_{C \in \Irr(\Rep(\VOA))} \hspace{-1em} N_{AB}^{~C} ~ \varphi_C \ .
\ee
\end{enumerate}
Conjecturally, the $N_{AB}^{~C}$ are the structure constants of $\Gr(\Rep(\VOA))$.

\medskip

In Section \ref{sec:sf} we apply this procedure to the VOA $\VOA_\mathrm{ev}$ given by the even part of $N$ pairs of symplectic fermions. The result agrees with the (also conjectural) fusion rules of $\VOA_\mathrm{ev}$.

\medskip

Finitely non-semisimple generalisations of the Verlinde formula 
were first considered in \cite{Flohr:2001zs} and in \cite{Fuchs:2003yu}. 
Further previous discussions can be found in 
\cite{Fuchs:2006nx,Flohr:2007jy,Gaberdiel:2007jv,Gainutdinov:2007tc,Pearce:2009pg}, 
see Remark \ref{rem:comments-ver} for more details.

\medskip

Related results on the logarithmic Verlinde formula and pseudo-trace functions have been obtained independently in \cite{Creutzig:2016fms}.

\bigskip

\noindent
{\bf Acknowledgements:}
We thank Thomas Creutzig, Vanda Farsad,  Terry Gannon,  Ehud Meir  and Christoph Schweigert for fruitful discussions.
We are grateful to
	Christian Blanchet, Vanda Farsad, J\"urgen Fuchs, Antun Milas and David Ridout
for helpful comments on a draft of this paper.
The work of AMG was  supported by a Humboldt fellowship,  DESY and CNRS.
The authors would like to thank the organisers of the Humboldt Kolleg ``Colloquium on Algebras and Representations -- Quantum 2016'' (Cordoba, Argentina), where this work was started, for a stimulating workshop and for their hospitality.

\bigskip

In all of this paper, $k$ will be an algebraically closed field of characteristic zero. 

\section{Non-degeneracy for braided finite tensor categories}
\label{sec:non-deg}

In this section we briefly recall Shimizu's result on the equivalence of several non-degeneracy conditions for braided finite tensor categories \cite{Shimizu:2016}.

\medskip

Following \cite{Etingof:2003}, a {\em finite tensor category} $\Cc$ is a category which
\begin{itemize}
\item is a $k$-linear finite abelian category (i.e.\ equivalent as a $k$-linear category to that of finite-dimensional modules over a finite-dimensional $k$-algebra),
\item is a monoidal category with $k$-bilinear tensor product functor,
\item is rigid (i.e.\ has left and right duals),
\item has a simple tensor unit $\one$.
\end{itemize}
We remark that, because $\Cc$ is rigid, the tensor product functor is exact in each argument.

\medskip

Let $\Cc$ be a braided finite tensor category, with braiding $c$ and 
left  (co)evaluation maps $\ev_X: X^*\ot X \to \one $ and $\coev_X: \one \to X\ot X^*$, for $X\in \Cc$ and its left dual $X^*$.
 Let $L$ be the coend 
(see e.g.\ \cite{Kerler:2001} or the review 
in~\cite[Sec.~4]{Fuchs:2010mw})
\be
	L ~=\, \int^{X \in \Cc} \hspace{-.5em} X^* \ot X \ .
\ee
We denote by 
\be
	\iota_X : X^* \ot X \longrightarrow L \quad , \quad  X \in \Cc \ ,
\ee
the corresponding dinatural transformation.
The coend $L$ exists since $\Cc$ is a finite tensor category (see e.g.\ \cite[Cor.\,5.1.8]{Kerler:2001}). It carries the structure of a Hopf algebra
in $\Cc$ and is equipped with a Hopf pairing $\omega : L \ot L \to \one$ \cite{Lyubashenko:1995}. The Hopf pairing is defined uniquely via (we omit the `$\ot$' between objects
	and use `$\sim$' for a successive application of associativity or unit isomorphisms)
\begin{align}
	\omega \circ (\iota_X \ot \iota_Y)
~=~& \big[
(X^*  X)  (Y^*  Y)
\xrightarrow{\sim}
X^*  ((X  Y^*)  Y)
\xrightarrow{\id \ot (c_{Y^*,X} \circ c_{X,Y^*}) \ot \id}
X^*  ((X  Y^*)  Y)
\nonumber\\
& \quad
\xrightarrow{\sim}
(X^*  X)  (Y^*  Y)
\xrightarrow{
\ev_X \ot \ev_Y}
\one  \one
\xrightarrow{\sim}
\one \big] \ .
\end{align}

As in \cite{Lyubashenko:1995,Shimizu:2015}, we define the space of {\em central elements} as 
$\CE(\Cc) = \Cc(L,\one)$
 and that of {\em class functions} as $\CF(\Cc) = \Cc(\one,L)$. 
The Hopf pairing of $L$ defines the linear map
\be\label{eq:Omega-def}
	\Omega : \CF(\Cc) \longrightarrow \CE(\Cc) \quad , \quad f \mapsto \omega \circ (f \ot \id)  \ . 
\ee

With these preparations, we can state four natural non-degeneracy conditions on the braided finite tensor category $\Cc$:
\begin{enumerate}
\item Every transparent object in $\Cc$ is isomorphic to a direct sum of tensor units. ($T \in\Cc$ is {\em transparent} if for all $X \in \Cc$, $c_{X,T} \circ c_{T,X} = \id_{T\ot X}$.)
\item The canonical braided monoidal functor 
$\Cc \boxtimes \overline{\Cc} \to \Zc(\Cc)$ 
is an equivalence. (Here, $\boxtimes$ is the Deligne product, $\overline{\Cc}$ is the same tensor category as $\Cc$, but has inverse braiding, 
and $\Zc(\Cc)$ is the Drinfeld centre of $\Cc$.)
\item The pairing $\omega : L \ot L \to \one$ is non-degenerate (in the sense that there exists a copairing $\one \to L \ot L$).
\item $\Omega$ is an isomorphism.
\end{enumerate}
For $\Cc$ semisimple, it has been known for some time that conditions
 1--4 are 
equivalent \cite{Bruguieres:2000,Muger2001b}. Recently, Shimizu was able to show this equivalence in general:

\begin{theorem}[\cite{Shimizu:2016}]\label{thm:factequiv}
For a braided finite tensor category $\Cc$,
conditions 1--4 are equivalent.
\end{theorem}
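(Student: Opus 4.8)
The plan is to split the four conditions into two naturally paired equivalences and then connect them. The \emph{coend pair} $(3)\Leftrightarrow(4)$ concerns only the Hopf pairing $\omega$ and the induced map $\Omega$, while the \emph{structural pair} $(1)\Leftrightarrow(2)$ concerns the M\"uger centre of transparent objects and the Drinfeld centre $\Zc(\Cc)$. I would prove each pair by the arguments sketched below and then bridge them through the observation that the coend $L$ itself lives in $\Zc(\Cc)$, so that non-degeneracy of $\omega$ and invertibility of the centre functor become two faces of the same factorisability statement.

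For the coend pair, I would first observe that $\Omega$ is, up to the duality isomorphisms $\Cc(L\ot L,\one)\cong\Cc(L,L^*)$ and $\Cc(L,\one)\cong\Cc(\one,L^*)$, nothing but composition with the \emph{mate} $\hat\omega : L \to L^*$ of the pairing $\omega$. Condition (3), the existence of a copairing, is equivalent to $\hat\omega$ being an isomorphism \emph{in} $\Cc$; granting this, $(3)\Rightarrow(4)$ is immediate, since composing with an isomorphism induces an isomorphism $\CF(\Cc)\to\CE(\Cc)$. The delicate direction is $(4)\Rightarrow(3)$: from invertibility of $\Omega$ on the single Hom-space $\Cc(\one,L)\cong\End(\Id_\Cc)$ I must recover invertibility of $\hat\omega$ as a morphism of objects. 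Here I would use that $L$ is a Hopf, hence Frobenius, algebra in $\Cc$ and that $\hat\omega$ is compatible with its (co)multiplication; feeding in the two-sided integral and cointegral of $L$ lets a distinguished-Hom-space isomorphism propagate to an isomorphism of the underlying objects. This integral/Frobenius input is precisely what upgrades the vector-space statement (4) to the object-level statement (3).

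For the structural pair I would work with the canonical braided tensor functor $G:\Cc\boxtimes\overline{\Cc}\to\Zc(\Cc)$, $(X,Y)\mapsto X\ot Y$ equipped with the half-braiding built from $c$ and $c^{-1}$. Following Brugui\`eres and M\"uger \cite{Bruguieres:2000,Muger2001b}, one shows that the failure of $G$ to be fully faithful is measured exactly by the transparent objects, so $G$ is fully faithful iff the M\"uger centre equals $\langle\one\rangle$, i.e.\ condition (1). Since $\mathrm{FPdim}\,\Zc(\Cc)=(\mathrm{FPdim}\,\Cc)^2=\mathrm{FPdim}(\Cc\boxtimes\overline{\Cc})$ for finite tensor categories, a fully faithful tensor functor between these two categories of equal Frobenius--Perron dimension has as its image a tensor subcategory of full dimension, hence all of $\Zc(\Cc)$, and is therefore an equivalence; this gives $(1)\Rightarrow(2)$. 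Conversely an equivalence is fully faithful, forcing the M\"uger centre to be trivial, which is $(2)\Rightarrow(1)$.

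The bridge, which I expect to be the main obstacle, is to identify the coend non-degeneracy with invertibility of $G$. Because $\Cc$ is braided, $L$ carries a canonical half-braiding and lifts to an object of $\Zc(\Cc)$; under the comparison with $\Cc\boxtimes\overline{\Cc}$ I would show that the Hopf pairing $\omega$, being assembled from the double braiding $c_{Y^*,X}\circ c_{X,Y^*}$, is non-degenerate exactly when $L$ is a \emph{factorisable} Hopf algebra in $\Cc$, and that this factorisability is in turn equivalent to $G$ being an equivalence, i.e.\ $(2)\Leftrightarrow(3)$. Both this bridge and the direction $(4)\Rightarrow(3)$ share the same essential difficulty: passing from a pairing- or Hom-level non-degeneracy to a genuine equivalence or isomorphism of objects. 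The only leverage for that passage is the Frobenius structure of $L$ (its integral and cointegral) together with the dimension count $\mathrm{FPdim}\,\Zc(\Cc)=(\mathrm{FPdim}\,\Cc)^2$, so I would expect the bulk of the work to lie in establishing these inputs and in verifying the compatibility of $\hat\omega$ and of $\omega$ with the Hopf-algebra structure of $L$.
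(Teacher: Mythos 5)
First, a point of calibration: the paper does not prove this statement at all --- Theorem~\ref{thm:factequiv} is imported verbatim from Shimizu \cite{Shimizu:2016} --- so your proposal has to stand on its own, and as it stands it contains one genuinely circular ingredient. In your step $(4)\Rightarrow(3)$ (and again in the bridge) the ``only leverage'' you identify is the two-sided integral and cointegral of $L$ based at the tensor unit. But for a general braided finite tensor category the object of integrals of the coend is the distinguished invertible object $D$, i.e.\ the integral is a morphism $D \to L$; an integral $\Lambda_L : \one \to L$ exists precisely when $\Cc$ is unimodular \cite{Shimizu:2014}. As the paper itself remarks after Definition~2.2, unimodularity is here deduced \emph{from} factorisability, and the normalised two-sided integral with $\omega \circ (\Lambda_L \ot \Lambda_L) = \id_\one$ is essentially a repackaging of the non-degeneracy you are trying to prove. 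So the Frobenius structure you plan to ``feed in'' is not available at that stage of the argument; you would first need an independent proof that condition~4 forces unimodularity, and none is indicated.

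Second, the two pivots carrying the rest of the plan are asserted rather than proved, and they are where the theorem actually lives. The lemma ``$G$ is fully faithful iff the M\"uger centre is trivial'' is attributed to \cite{Bruguieres:2000,Muger2001b}, but those arguments are for premodular/semisimple categories: they decompose objects into simples and use traces, neither of which survives in the finite non-semisimple setting; the implication (M\"uger centre trivial $\Rightarrow$ $G$ fully faithful) in this generality is essentially the content of Shimizu's theorem, and your proposal presupposes it. Likewise the bridge $(2)\Leftrightarrow(3)$ is explicitly left open. What is sound: the identification of condition~3 with invertibility of the mate $\hat\omega : L \to L^*$ and the easy implication $(3)\Rightarrow(4)$; the equality $\mathrm{FPdim}\,\Zc(\Cc) = (\mathrm{FPdim}\,\Cc)^2$ for finite tensor categories and the fact that a fully faithful, dominant tensor functor between finite tensor categories of equal Frobenius--Perron dimension is an equivalence \cite{EGNO-book}, so the dimension count in $(1)\Rightarrow(2)$ would indeed close the argument \emph{once} full faithfulness is known. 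Note also that the comparatively easy direction connecting the coend to transparency needs no integral: for $T$ transparent the monodromy inside $\omega \circ (\iota_T \ot \iota_Y)$ is the identity, so if $\hat\omega$ is invertible then $\iota_T$ factors through $\one$, which together with the injectivity of internal characters (Theorem~\ref{thm:chiinj}) constrains $T$ --- an indication that the workable route manipulates the coend and the pairing directly rather than via integrals at the unit. Net effect: your outline is a plausible map of the terrain, but it reduces the theorem to two unproven claims of the same depth as the theorem itself, plus one tool whose existence presupposes the conclusion.
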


Following the nomenclature in Hopf algebras, we say:

\begin{definition}
A braided finite tensor category satisfying the equivalent conditions 
1--4 above is called {\em factorisable}.
\end{definition}

For a factorisable finite tensor category $\Cc$, the coend $L$ admits a left/right integral\footnote{A left integral is a morphism $\Lambda_L:\one \to L$ satisfying $\mu_L\circ(\id_L\otimes\Lambda_L)=\Lambda_L\circ \varepsilon_L$, where $\mu_L$ is the multiplication on $L$ and $\varepsilon_L$ is the counit of $L$. A right integral is defined similarly.} 
$\Lambda_L : \one \to L$
	satisfying $\omega \circ (\Lambda_L \ot \Lambda_L) = \id_\one$ \cite{Lyubashenko:1995}, see also \cite[Sect.\,5.2.3]{Kerler:2001}.
	An integral is unique up to a scalar factor, and so the normalisation condition $\omega \circ (\Lambda_L \ot \Lambda_L) = \id_\one$ determines $\Lambda_L$ up to a sign. 
The existence of $\Lambda_L$ also implies that $\Cc$ is unimodular. (See \cite{Etingof:2004} for the definition of unimodularity; for the relation to integrals, see \cite[Thm.\,6.8]{Shimizu:2014}).

\section{A categorical Verlinde formula}\label{sec:GrVer}

In this section, $\Cc$ denotes a braided finite tensor category over $k$ which is in addition pivotal. Write 
 $\delta_X : X \to X^{**}$ for the pivotal structure of $\Cc$. 
We choose the right dual of $X\in\Cc$ to be equal to the left dual $X^*$ and define the right duality morphisms as
\begin{align}
\widetilde\ev_X ~&=~ \big[\,X \ot X^* \xrightarrow{\delta_X \ot \id} X^{**} \ot X^* \xrightarrow{\ev_{X^*}} \one \,\big] \ ,
\nonumber \\
\widetilde\coev_X ~&=~ \big[\, \one \xrightarrow{\coev_{X^*}}
 X^{*} \ot X^{**}
\xrightarrow{\id \ot \delta_X^{-1}} X^{*} \ot X \,\big] \ .
\end{align}

\medskip

Denote by $\Irr(\Cc)$ a choice of representatives of the isomorphism classes of simple objects in $\Cc$. Since $\Cc$ is finite, $\Irr(\Cc)$ is a finite set.
The Grothendieck ring $\Gr(\Cc)$ of $\Cc$ is freely generated as a $\Zb$-module by the classes $[U]$, $U \in \Irr(\Cc)$. We write $\Gr_k(\Cc) := k 
	\otimes_\Zb
\Gr(\Cc)$ for the corresponding $k$-algebra. The structure constants $N_{U,V}^{~W}$ of $\Gr(\Cc)$ are defined via
\be
	[U] \, [V] ~=~ \sum_{W \in \Irr(\Cc)} N_{U,V}^{~W} \, [W] 
	\qquad , ~~ U,V \in \Irr(\Cc) \ .
\ee

\medskip

The {\em internal character}	of $V \in \Cc$ is the element $\chi_V \in \CF(\Cc)$ given by \cite{Fuchs:2013lda,Shimizu:2015}\footnote{
  We deviate slightly from the convention in \cite[Sect.\,5.2]{Shimizu:2016}, where the internal character is defined as $\chi^\text{Shimizu}_{V} = \iota_{V^*} \circ (\delta_V \ot \id) \circ \coev_V$.  The relation to \eqref{eq:chiV-def} is $\chi_V = \chi^\text{Shimizu}_{V^*}$. In the semisimple case and for $V \in \Irr(\Cc)$, $\chi_V$  is related to the natural endomorphism $I_V$ of the identity functor given by
   $(I_V)_X = \delta_{V,X} \id_X$ ($X \in \Irr(\Cc)$), while $\chi^\text{Shimizu}_{V}$ is related to $I_{V^*}$, see 
   point 3 of Remark \ref{rem:ver}.
  }
\be\label{eq:chiV-def}
	\chi_V ~=~ 
	\big[\, \one \xrightarrow{\widetilde\coev_V} V^* \ot V   \xrightarrow{\iota_{V}} L \,\big] \ .
\ee

\begin{remark}
When $\Cc$ is $\Rep H$ for $H$ a 
	finite-dimensional
ribbon Hopf algebra over $k$, then $\one=k$
	and $L = H^*$ with coadjoint action. Thus
the images $\chi_V(1)$
are linear forms on~$H$ 
invariant under the coadjoint action (called $q$-characters). They are equal to the trace functions $\chi_V(1)=\mathrm{Tr}_V(uv^{-1}\,\cdot\,)$, with the ribbon element $v$ and the Drinfeld element~$u$.
\end{remark}

\begin{theorem}[{\cite[Cor.\,4.2\,\&\,Thm.\,5.12]{Shimizu:2015}}] 
\label{thm:chiinj}
The map $V \mapsto \chi_V$ factors through $\Gr(\Cc)$. The induced map $\chi : \Gr_k(\Cc) \to \CF(\Cc)$, $[V] \mapsto \chi_V$, is injective. Suppose in addition that $\Cc$ is unimodular. Then $\chi$ is surjective iff $\Cc$ is semisimple.
\end{theorem}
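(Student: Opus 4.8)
The plan is to establish the three claims in turn: additivity of the internal character (so that $V\mapsto\chi_V$ descends to $\Gr_k(\Cc)$), injectivity of the induced map, and the surjectivity criterion under unimodularity. For the factorisation through $\Gr(\Cc)$, I would first promote $\chi_V$ to an internal pivotal trace $\chi_{(V,f)}=\iota_V\circ(\text{pivotal trace of }f)$ that is linear in an endomorphism $f\in\End(V)$ and reduces to $\chi_V$ at $f=\id_V$. Given a short exact sequence $0\to V'\to V\to V''\to 0$, the decisive point is that the pivotal trace annihilates the off-diagonal part of any endomorphism compatible with the two-step filtration; this is a formal consequence of the dinaturality of $\iota$ and the naturality of the pivotal structure $\delta$. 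It then follows that $\chi_V=\chi_{V'}+\chi_{V''}$ as morphisms $\one\to L$, so $\chi_V$ is additive on short exact sequences and depends only on the class in $\Gr(\Cc)$; extending $k$-linearly yields $\chi:\Gr_k(\Cc)\to\CF(\Cc)$.

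For injectivity I would reduce to the classical linear independence of the trace characters of the distinct simple modules of a finite-dimensional $k$-algebra. Writing $\Cc\simeq A\text{-mod}$ and using the description of $L$ as the dual of $A$ with coadjoint action, $\chi_U$ is identified with the linear form $x\mapsto\mathrm{Tr}_U(g\,x)$ for a fixed invertible twist $g$ (the element $uv^{-1}$ of the Remark). Pairing with a primitive idempotent $e_V$, for which $Ae_V$ is the projective cover of $V$, gives $\mathrm{Tr}_U(e_V)=\dim\Hom(Ae_V,U)=\delta_{U,V}$; since $g$ acts invertibly this produces a nonzero diagonal pairing $\langle\chi_U,e_V\rangle\neq 0$ and hence a dual family, forcing the $\chi_U$ to be linearly independent. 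I would emphasise that the obvious pairing with central elements $\CE(\Cc)\cong\End(\Id_\Cc)$ does not suffice: it detects $\chi_U$ only through the quantity $\lambda_U(\alpha)\dim U$, and the categorical dimension $\dim U$ may vanish. Detecting each simple by means of its projective cover is precisely what circumvents this.

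For surjectivity, the easy direction is that $\Cc$ semisimple implies $\chi$ surjective: then $L\cong\bigoplus_{U}U^*\ot U$ and $\CF(\Cc)=\bigoplus_U\Cc(\one,U^*\ot U)\cong k^{\,\#\Irr(\Cc)}$, so injectivity together with the matching of dimensions gives a bijection. For the converse I would argue by dimension count: the image of $\chi$ has dimension $\#\Irr(\Cc)$, so $\chi$ is surjective iff $\dim\CF(\Cc)=\#\Irr(\Cc)$. Here unimodularity enters: it provides a two-sided integral $\Lambda_L:\one\to L$, which is itself a class function in $\CF(\Cc)$, and it identifies $\CF(\Cc)$ with a space of central/symmetric forms on $A$. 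The internal characters span exactly the forms that vanish on $\mathrm{rad}\,A$ (a simple module is annihilated by the radical), a space of dimension $\#\Irr(\Cc)$; the expectation is that for non-semisimple $\Cc$ the integral $\Lambda_L$ does not vanish on $\mathrm{rad}\,A$ and so lies outside the span of the $\chi_U$, whence $\dim\CF(\Cc)>\#\Irr(\Cc)$ and $\chi$ fails to be surjective. Contrapositively, surjectivity forces $\mathrm{rad}\,A=0$, i.e.\ semisimplicity.

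I expect the main obstacle to lie in two places. In injectivity, the real difficulty is genuinely the possibility of simple objects with vanishing categorical dimension, which is what makes the naive central-element pairing fail and forces the projective-cover argument. In surjectivity, the hard implication is ``$\chi$ surjective $\Rightarrow$ semisimple'': it is exactly here that unimodularity is indispensable, guaranteeing that a non-semisimple category carries class functions (such as the integral) beyond the irreducible internal characters; without unimodularity the dimension comparison underlying this step can break down.
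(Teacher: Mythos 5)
You should first be aware that the paper contains no proof of this theorem at all: it is imported from Shimizu \cite[Cor.\,4.2\,\&\,Thm.\,5.12]{Shimizu:2015}, so the benchmark is that source. Your skeleton (additivity of $\chi$ gives the factorisation through $\Gr(\Cc)$; linear independence of the simple characters gives injectivity; a dimension count together with the integral settles surjectivity, the direction ``semisimple $\Rightarrow$ surjective'' via $L\cong\bigoplus_U U^*\ot U$ being fine) is the right one, but each of the three load-bearing steps has a genuine gap. The most serious is additivity. Your claim that the trace ``annihilates the off-diagonal part'' of a filtration-compatible endomorphism, as a \emph{formal} consequence of dinaturality of $\iota$ and naturality of $\delta$, presupposes a decomposition $\id_V = (\text{diagonal}) + (\text{off-diagonal})$, i.e.\ a splitting of the exact sequence --- exactly what a non-semisimple category lacks, and there is no underlying vector space in which to choose one. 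What dinaturality gives formally is only cyclicity, $\chi_V(f\circ g)=\chi_W(g\circ f)$, and cyclicity is strictly weaker: already for $A=k[x]/(x^2)$ and the sequence $0\to S\to A\to S\to 0$ in $A\text{-mod}$, the universal receptacle of cyclic trace functions satisfies $[\id_A]\neq 2[\id_S]$ (cyclicity only forces $[x\cdot(-)]=0$), so $\chi_V=\chi_{V'}+\chi_{V''}$ cannot follow from dinaturality and naturality alone. It is a special property of the coend $L$ in a finite tensor category and is precisely the non-trivial content of Shimizu's Cor.\,4.2.

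For injectivity, your reduction to ``$\Cc\simeq A\text{-mod}$ with $L=A^*$ carrying the coadjoint action and $\chi_U=\mathrm{Tr}_U(g\,\cdot\,)$'' is the Hopf-algebra picture of Remark \ref{rem:ver} (cf.\ the remark after \eqref{eq:chiV-def}) and requires a fibre functor; a general pivotal finite tensor category admits none (the abelian equivalence $\Cc\simeq A\text{-mod}$ says nothing about $\ot$, duals or $L$, and e.g.\ categories with irrational Frobenius--Perron dimensions have no such realisation), so your argument proves the theorem only for $\Cc=\Rep H$. Even there the decisive step fails as stated: the pairing is $\langle\chi_U,e_U\rangle=\mathrm{Tr}_U(g\,e_U)$, which equals the trace of the compression $e_U g e_U$ on the line $e_U U$, and invertibility of $g$ does not make a compression nonzero ($ege$ can vanish for invertible $g$). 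The repair is easy in the Hopf case --- pair with $g^{-1}e_V$, equivalently substitute $x\mapsto g^{-1}x$ to reduce twisted characters to ordinary ones --- but it again lives only in the Hopf picture. Finally, for ``surjective $\Rightarrow$ semisimple'' you explicitly leave the crux as an ``expectation'' (that $\Lambda_L$ does not vanish on $\mathrm{rad}\,A$). In your own algebra picture this is a one-liner you missed: $\Lambda_L$ is the cointegral $\lambda$ of $A$, the form $(a,b)\mapsto\lambda(ab)$ is non-degenerate by Larson--Sweedler, and $\lambda(\mathrm{rad}\,A)=0$ for the two-sided ideal $\mathrm{rad}\,A$ would force $\mathrm{rad}\,A=0$; but an expectation is not a proof, and in the fibre-functor-free setting this must be replaced by Shimizu's argument in Thm.\,5.12, where surjectivity is played against the integral $\Lambda_L:\one\to L$ to show that $\one$ is projective. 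In short: correct architecture, but the three places where the cited proofs do real work --- additivity, injectivity without a fibre functor, and the non-semisimple obstruction to surjectivity --- are exactly the places your proposal does not supply an argument.
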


This theorem holds more generally in pivotal finite tensor categories. If $\Cc$ is factorisable, it is automatically unimodular, so the last part of the theorem applies in this case.

\medskip

The Hopf algebra structure on $L$ induces an algebra structure on central elements and on class functions: Let $f,g \in \CE(\Cc)$ and $a,b \in \CF(\Cc)$. Then
\begin{align}
	f * g &\,:=\, \big[ L \xrightarrow{\Delta_L} L \ot L \xrightarrow{f \ot g} \one \ot \one \xrightarrow{\sim} \one \big] \ ,
	\nonumber \\
	a \cdot b &\,:=\, \big[\, \one \xrightarrow{\sim} \one \ot \one \xrightarrow{ a \ot b} L \ot L \xrightarrow{\mu_L} L \,\big] \ .
\end{align}
The units are given by the counit and unit of $L$, respectively: $1_{\CE} = \eps_L$, $1_{\CF} = \eta_L$.

Let $\End(\Id_\Cc)$ denote the $k$-algebra of natural endomorphisms of the identity functor on $\Cc$. Given $\alpha \in \End(\Id_\Cc)$, we obtain an element 
$\psi(\alpha) \in \CE(\Cc)$ via $\psi(\alpha) \circ \iota_X = \ev_X \circ (\id \ot \alpha_X)$ for all $X \in \Cc$. This defines a $k$-linear map 
\be
\psi : \End(\Id_\Cc) \longrightarrow \CE(\Cc) \ .
\ee
We have ({\cite{Lyubashenko:1995}, see also {\cite[Prop.\,5.2.5]{Kerler:2001}):

\begin{lemma}\label{lem:EndId-CE}
$\psi$ is an isomorphism of $k$-algebras.
\end{lemma}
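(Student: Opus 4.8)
The plan is to establish the two halves of the statement separately: first that $\psi$ is a linear isomorphism, and then that it carries composition in $\End(\Id_\Cc)$ to the convolution product $*$ on $\CE(\Cc)$. For the first half I would work entirely through the universal property of the coend. By definition of $L = \int^{X} X^*\ot X$, an element of $\CE(\Cc)=\Cc(L,\one)$ is the same datum as a dinatural family $h_X:X^*\ot X\to\one$, via $h_X = f\circ\iota_X$. Rigidity supplies, for each fixed $X$, a linear bijection $\Cc(X,X)\xrightarrow{\sim}\Cc(X^*\ot X,\one)$, $\alpha_X\mapsto\ev_X\circ(\id\ot\alpha_X)$, with inverse $h\mapsto(\id_X\ot h)\circ(\coev_X\ot\id_X)$ (the two snake identities check this). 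The crux of the first half is then that, under this family of bijections, a family $\{\alpha_X\}$ is natural in $X$ exactly when $\{\ev_X\circ(\id\ot\alpha_X)\}$ is dinatural in $X$. This is a one-line computation: for $f:X\to Y$ one rewrites both sides of the dinaturality equation $h_X\circ(f^*\ot\id_X)=h_Y\circ(\id_{Y^*}\ot f)$ using the naturality of evaluation, $\ev_Y\circ(\id_{Y^*}\ot f)=\ev_X\circ(f^*\ot\id_X)$, and finds that dinaturality of $h$ is equivalent to $\alpha_Y\circ f=f\circ\alpha_X$; since $\ev_Y\circ(\id\ot-)$ is injective, this is precisely naturality of $\alpha$. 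The same computation simultaneously shows that $\psi(\alpha)$ is a well-defined element of $\CE(\Cc)$ and that $\psi$ is a linear bijection onto $\CE(\Cc)$.

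For multiplicativity I would use the explicit coproduct of the coend Hopf algebra, which takes the braiding-free form $\Delta_L\circ\iota_X=(\iota_X\ot\iota_X)\circ(\id_{X^*}\ot\coev_X\ot\id_X)$. Precomposing $\psi(\alpha)*\psi(\beta)$ with $\iota_X$ and inserting the definitions gives
\[
(\psi(\alpha)*\psi(\beta))\circ\iota_X = (\ev_X\ot\ev_X)\circ(\id_{X^*}\ot\alpha_X\ot\id_{X^*}\ot\beta_X)\circ(\id_{X^*}\ot\coev_X\ot\id_X).
\]
Here the two evaluations together with the inserted coevaluation form a zig-zag on the middle $X\ot X^*$ pair; collapsing it by the snake identity and sliding $\alpha_X,\beta_X$ along the strands by naturality leaves a single evaluation of the outer $X^*\ot X$ with the composite endomorphism inserted, namely $\ev_X\circ(\id_{X^*}\ot(\alpha_X\circ\beta_X))=\psi(\alpha\circ\beta)\circ\iota_X$. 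By the universal property of $L$ this yields $\psi(\alpha)*\psi(\beta)=\psi(\alpha\circ\beta)$. For the units one simply notes $\psi(\id_{\Id_\Cc})\circ\iota_X=\ev_X=\eps_L\circ\iota_X$, so $\psi(\id_{\Id_\Cc})=\eps_L=1_{\CE}$.

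The main obstacle is the multiplicativity diagram chase: one must fix the correct coproduct $\Delta_L$ in its braiding-free form and then carry out the zig-zag collapse while keeping careful track of which strand each of $\alpha_X$ and $\beta_X$ sits on and of the order in which the composite $\alpha_X\circ\beta_X$ emerges. This is cleanest to present in string-diagram notation rather than symbolically. A useful consistency check is that the order of composition is immaterial: any two elements of $\End(\Id_\Cc)$ commute, since naturality of $\alpha$ applied to the morphism $\beta_X:X\to X$ gives $\alpha_X\circ\beta_X=\beta_X\circ\alpha_X$; hence $\CE(\Cc)$ is forced to be commutative under $*$, which the computation should reproduce.
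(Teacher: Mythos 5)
Your proof is correct and follows essentially the same route as the paper's source for this lemma (the paper cites \cite{Lyubashenko:1995} and \cite[Prop.\,5.2.5]{Kerler:2001} rather than proving it): bijectivity via the coend's universal property combined with the rigidity adjunction, and multiplicativity via the braiding-free coproduct $\Delta_L\circ\iota_X=(\iota_X\ot\iota_X)\circ(\id_{X^*}\ot\coev_X\ot\id_X)$ and a snake-identity collapse. In particular, your componentwise inverse $h\mapsto(\id_X\ot h)\circ(\coev_X\ot\id_X)$ is, for $h=f\circ\iota_X$, exactly the explicit formula for $\psi^{-1}(f)_X$ displayed in the paper after the lemma.
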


In particular, $\CE(\Cc)$ is commutative. The inverse map to $\psi$ can be given explicitly: For $X \in \Cc$, $f \in \CE(\Cc)$ (omitting `$\ot$' between objects)
\be
	\psi^{-1}(f)_X = \big[ X \xrightarrow{\sim} \one X
	\xrightarrow{\coev_X \ot \id} (X  X^*)  X
	\xrightarrow{\sim} X  (X^*  X)
	\xrightarrow{\id \ot \iota_X} X L
	\xrightarrow{\id \ot f} X \one
	\xrightarrow{\sim} X\big] \ .
\ee

Recall the map $\Omega : \CF(\Cc) \to \CE(\Cc)$ defined in \eqref{eq:Omega-def}.

\begin{lemma} \label{lem:Omalg}
$\Omega$ is a $k$-algebra map.
\end{lemma}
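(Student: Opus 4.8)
The plan is to deduce the statement directly from the axioms of the Hopf pairing $\omega$ on $L$, avoiding any manipulation of its explicit coend definition. Recall from \cite{Lyubashenko:1995} that $\omega$ is a Hopf pairing; in particular it intertwines the product in its first argument with the coproduct in its second argument, which in the graphical calculus of $\Cc$ reads
\be\label{eq:pairing-axiom}
	\omega \circ (\mu_L \ot \id_L)
	~=~ (\omega \ot \omega) \circ (\id_L \ot c_{L,L} \ot \id_L) \circ (\id_L \ot \id_L \ot \Delta_L)
\ee
as morphisms $L \ot L \ot L \to \one$, and it is unital in the sense that $\omega \circ (\eta_L \ot \id_L) = \eps_L$. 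The product $*$ on $\CE(\Cc)$ uses $\Delta_L$ and the product $\cdot$ on $\CF(\Cc)$ uses $\mu_L$, so \eqref{eq:pairing-axiom} is exactly the compatibility one needs to move $\Omega$ past both products at once.

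First I would unfold the left-hand side. For $a,b \in \CF(\Cc)$ we have $a \cdot b = \mu_L \circ (a \ot b)$ up to unit isomorphisms, and $\Omega(f) = \omega \circ (f \ot \id_L)$ by \eqref{eq:Omega-def}, hence
\be
	\Omega(a \cdot b) ~=~ \omega \circ (\mu_L \ot \id_L) \circ (a \ot b \ot \id_L) \ ,
\ee
where the domain $\one \ot \one \ot L$ is identified with $L$. Substituting \eqref{eq:pairing-axiom} and commuting $a \ot b$ past $\id_L \ot \id_L \ot \Delta_L$, the coproduct acts only on the free leg while $a$ and $b$ feed the first two legs. The braiding $c_{L,L}$ then sits between the output of $b : \one \to L$ and the first tensorand of $\Delta_L$; by naturality of $c$ together with $c_{\one,L} = \id_L$ it reduces to the identity. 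What remains pairs $a$ with the first tensorand and $b$ with the second, i.e.
\be
	\Omega(a \cdot b) ~=~ \big(\, \Omega(a) \ot \Omega(b) \,\big) \circ \Delta_L ~=~ \Omega(a) * \Omega(b) \ ,
\ee
which is the product $*$ on $\CE(\Cc)$. For units, $1_{\CF} = \eta_L$ and the unitality of $\omega$ give $\Omega(\eta_L) = \omega \circ (\eta_L \ot \id_L) = \eps_L = 1_{\CE}$. Together these show that $\Omega$ is a $k$-algebra map.

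The main obstacle is purely one of braided bookkeeping: writing \eqref{eq:pairing-axiom} with the braiding placed so that the two legs of $\omega \ot \omega$ receive the tensorands of $\Delta_L$ in the order matching the definition of $*$, and checking that the braiding $c_{L,L}$ introduced there becomes trivial once its inner leg is plugged with a morphism out of $\one$. If one would rather not invoke the abstract Hopf-pairing axioms, the identity \eqref{eq:pairing-axiom} itself can be established from the explicit formula for $\omega$: precompose both sides with $\iota_X \ot \iota_Y \ot \iota_Z$, use the dinaturality of the $\iota$'s and the descriptions of $\mu_L$ and $\Delta_L$ on the coend, and compare the resulting double braidings. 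This string-diagram verification is routine but is the one place where the orientation of the braidings must be tracked with care.
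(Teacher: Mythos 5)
Your proof follows the same route as the paper's, which disposes of the lemma in one line by citing the Hopf-pairing property of $\omega$ together with the commutativity of $\CE(\Cc)$; your contribution is to write out the unfolding in detail, and the key mechanism you identify is correct: after precomposing the three-strand compatibility between $\mu_L$, $\Delta_L$ and $\omega$ with $a \ot b$, the inserted braiding $c_{L,L}$ trivialises by naturality together with $c_{\one,L} = \id$, because one of its strands is fed from $\one$. (This is exactly the categorical counterpart of the use of ad-invariance of $q$-characters in the proof of Drinfeld's lemma, cf.\ the remark following Lemma~\ref{lem:Omalg} in the paper.) The unit computation $\Omega(\eta_L) = \eps_L$ is also fine. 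The one soft spot is your phrase ``writing [the compatibility] with the braiding placed so that the two legs of $\omega \ot \omega$ receive the tensorands of $\Delta_L$ in the order matching the definition of $*$'': the form of the Hopf-pairing axiom is not yours to choose --- it is fixed by the conventions defining $\mu_L$, $\Delta_L$ and $\omega$ on the coend (cf.\ \cite[Sect.\,3.1]{Shimizu:2016}), and with the opposite leg-order (which one indeed gets in the standard braiding-free formulation $\omega \circ (\mu_L \ot \id_L) = \omega \circ (\id_L \ot \omega \ot \id_L) \circ (\id_{L \ot L} \ot \Delta_L)$) your computation yields $\Omega(a \cdot b) = \Omega(b) * \Omega(a)$, i.e.\ a priori only an anti-algebra map. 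This is precisely why the paper's proof invokes, in addition, that $\CE(\Cc)$ is commutative --- immediate from Lemma~\ref{lem:EndId-CE}, since $\End(\Id_\Cc)$ is always commutative --- which renders the order of the two factors irrelevant. Adding that one sentence makes your argument convention-independent and complete; your proposed fallback of verifying the compatibility on the coend via $\iota_X \ot \iota_Y \ot \iota_Z$ and dinaturality would also settle the order, but it is unnecessary work once commutativity is in hand.
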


\begin{proof}
This is an immediate consequence of $\omega$ being a Hopf pairing (see e.g.\ \cite[Sect.\,3.1]{Shimizu:2016} for conventions) and of $\CE(\Cc)$ being commutative.
\end{proof}

\begin{remark}
When $\Cc=\Rep H$ for a finite-dimensional 
quasi-triangular
 Hopf algebra~$H$, 
the map $\Omega$ is the Drinfeld mapping from the space of $q$-characters (linear forms on $H$ invariant under the coadjoint $H$-action) to the centre of $H$, given by $\chi(\cdot)\mapsto \chi(M')M''$ for the monodromy matrix $M=R_{21}R_{12}$, while Lemma~\ref{lem:Omalg} reduces to Drinfeld's lemma~\cite{Drinfeld}. For a factorisable Hopf algebra, the Drinfeld mapping is an isomorphism of the algebras.
\end{remark}

Recall from Theorem \ref{thm:factequiv} that the invertibility of $\Omega$ was one of the equivalent characterisations of factorisability
of a braided finite tensor category $\Cc$.

\begin{lemma}[{\cite[Sec.\,4.5]{Fuchs:2010mw} and \cite[Thm.\,3.11\,\&\,Prop.\,3.14]{Shimizu:2015}}]
$\chi : \Gr_k(\Cc) \to \CF(\Cc)$ is a $k$-algebra map.
\end{lemma}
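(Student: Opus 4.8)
\emph{Proof plan.}
By Theorem~\ref{thm:chiinj} the assignment $V\mapsto\chi_V$ already descends to a $k$-linear map on $\Gr_k(\Cc)$, and the product of $\Gr(\Cc)$ is the one induced by $\ot$, so that $[U]\,[V]=[U\ot V]$ and the unit is $[\one]$. Hence being an algebra map amounts to the two identities
\[
	\chi_\one = 1_{\CF} = \eta_L
	\qquad\text{and}\qquad
	\chi_{U\ot V} = \chi_U \cdot \chi_V \quad (U,V\in\Cc),
\]
where $\cdot$ is the product on $\CF(\Cc)$ induced by $\mu_L$. The unit identity is immediate: $\one^*=\one$, the right coevaluation $\widetilde\coev_\one$ is the canonical isomorphism $\one\xrightarrow{\sim}\one^*\ot\one$, and by the Hopf-algebra structure of $L$ the unit $\eta_L$ is exactly $\iota_\one$ precomposed with this isomorphism (see \cite[Sec.~5.2]{Kerler:2001}); composing gives $\chi_\one=\iota_\one\circ\widetilde\coev_\one=\eta_L$.

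For the multiplicative identity I would unfold both sides as morphisms $\one\to L$. Using \eqref{eq:chiV-def} and the definition of $\cdot$,
\[
	\chi_U\cdot\chi_V = \mu_L\circ(\iota_U\ot\iota_V)\circ(\widetilde\coev_U\ot\widetilde\coev_V),
\]
with the unit isomorphism $\one\xrightarrow{\sim}\one\ot\one$ suppressed. The one external input is the explicit description of the coend multiplication: its defining property writes $\mu_L\circ(\iota_U\ot\iota_V)=\iota_{U\ot V}\circ\lambda_{U,V}$ for a reshuffling morphism $\lambda_{U,V}\colon(U^*\ot U)\ot(V^*\ot V)\to(U\ot V)^*\ot(U\ot V)$ assembled from the associator, the canonical identification $(U\ot V)^*\cong V^*\ot U^*$, and a single braiding moving the $V^*$-leg in front of $U^*\ot U$ (part of the braided Hopf-algebra structure of $L$, \cite[Sec.~5.2]{Kerler:2001}). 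The goal is then the purely diagrammatic identity $\lambda_{U,V}\circ(\widetilde\coev_U\ot\widetilde\coev_V)=\widetilde\coev_{U\ot V}$, which upon postcomposition with $\iota_{U\ot V}$ yields $\chi_{U\ot V}$.

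To prove this last identity I would combine two facts. First, the tensor compatibility of the right coevaluation, $\widetilde\coev_{U\ot V}=(\id_{V^*}\ot\widetilde\coev_U\ot\id_V)\circ\widetilde\coev_V$ (up to associativity), which follows from unwinding $\widetilde\coev$ through the pivotal structure $\delta$ and the snake identities. Second, naturality of the braiding applied to $\widetilde\coev_U\colon\one\to U^*\ot U$, together with $c_{\one,V^*}=\id_{V^*}$:
\[
	c_{U^*\ot U,\,V^*}\circ(\widetilde\coev_U\ot\id_{V^*})
	= (\id_{V^*}\ot\widetilde\coev_U)\circ c_{\one,V^*}
	= \id_{V^*}\ot\widetilde\coev_U .
\]
Thus the braiding in $\lambda_{U,V}$ slides the whole $\widetilde\coev_U$-block past $V^*$ without leaving any residue, and the two facts combine to give exactly $\widetilde\coev_{U\ot V}$. (Should the chosen convention instead produce $\iota_{V\ot U}\circ\lambda'_{U,V}$, one finishes either by dinaturality of the coend leg or by noting $\chi_{V\ot U}=\chi_{U\ot V}$, since $\chi$ factors through the commutative ring $\Gr_k(\Cc)$.) Carrying this out in graphical calculus makes the cancellation transparent.

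The main obstacle is therefore not conceptual but a matter of conventions: one must pin down the precise form of $\lambda_{U,V}$ in the Hopf-algebra structure on $L$ and match the right-duality maps $\widetilde\ev,\widetilde\coev$, built here from $\delta_X$, with those of \cite{Shimizu:2015,Kerler:2001}, carrying through the normalisation shift $\chi_V=\chi^{\text{Shimizu}}_{V^*}$ recorded after \eqref{eq:chiV-def}. Conceptually the statement is forced: for $\Cc=\Rep H$ it is the morphism-level shadow of $\mathrm{Tr}_{U\ot V}=\mathrm{Tr}_U\cdot\mathrm{Tr}_V$ under the product on $H^*$ dual to the coproduct of $H$, so the only genuine work is verifying that the single braiding in $\lambda_{U,V}$ cancels as above.
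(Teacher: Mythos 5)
Your proposal is correct and is essentially the argument behind the paper's proof, which here consists only of the citations to \cite[Sec.\,4.5]{Fuchs:2010mw} and \cite[Thm.\,3.11\,\&\,Prop.\,3.14]{Shimizu:2015}: there, multiplicativity is established exactly as you do, via the universal description of $\mu_L$ on the coend, naturality of the braiding applied to morphisms out of $\one$ (which trivialises the single braiding in the product), and monoidality of the pivotal structure giving $\widetilde\coev_{U\ot V}=(\id_{V^*}\ot\widetilde\coev_U\ot\id_V)\circ\widetilde\coev_V$, with the unit case being the identification $\eta_L=\iota_\one\circ(\one\xrightarrow{\sim}\one^*\ot\one)$. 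Your hedge on conventions — resolving a possible $\iota_{V\ot U}$ via dinaturality of the coend leg or via commutativity of $\Gr_k(\Cc)$, which holds since $\Cc$ is braided — correctly disposes of the only genuine pitfall.
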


By Theorem \ref{thm:chiinj}, the internal 
characters $\{ \chi_U \,|\, U \in \Irr(\Cc) \}$ are linearly independent in $\CF(\Cc)$. Since $\chi$ is an algebra map,
i.e.\ $\chi_{V\otimes W} = \chi_V \cdot \chi_W$,
 the structure constants of $\Gr_k(\Cc)$ can be uniquely recovered from
\be
\chi_U \cdot \chi_V ~=~ \sum_{W \in \Irr(\Cc)} N_{U,V}^{~W} \, \chi_W \ .
\ee

As a corollary to Theorem \ref{thm:chiinj} and Lemma \ref{lem:Omalg}, one obtains:

\begin{corollary}\label{cor:ver}
For $U,V \in \Irr(\Cc)$,
\be
	\Omega^{-1}\big( \, \Omega(\chi_U) * \Omega(\chi_V)\, \big)
	~=~
	\sum_{W \in \Irr(\Cc)} N_{U,V}^{~W} \, \chi_W \ .
\ee
\end{corollary}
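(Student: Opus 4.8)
The plan is to obtain this as an immediate consequence of the two algebra-map statements already in place, namely that $\chi : \Gr_k(\Cc) \to \CF(\Cc)$ is a morphism of $k$-algebras and that $\Omega : \CF(\Cc) \to \CE(\Cc)$ is one as well (Lemma \ref{lem:Omalg}). The conceptual picture is that $\Omega$ intertwines the two products: the product $\cdot$ on $\CF(\Cc)$, built from the multiplication $\mu_L$ of $L$, and the product $*$ on $\CE(\Cc)$, built from the comultiplication $\Delta_L$. Hence conjugating $*$ by $\Omega$ simply returns $\cdot$, and $\cdot$ already encodes the fusion rules through $\chi$. So the corollary should require no genuine computation beyond unwinding these facts, provided $\Omega$ is invertible.

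Concretely, I would carry out three steps. First, I invoke that $\chi$ is an algebra map, i.e.\ $\chi_{V \ot W} = \chi_V \cdot \chi_W$, which on $\Irr(\Cc)$ gives the identity displayed just before the corollary,
\be
	\chi_U \cdot \chi_V ~=~ \sum_{W \in \Irr(\Cc)} N_{U,V}^{~W}\, \chi_W \ .
\ee
Second, I apply $\Omega$ to both sides and use Lemma \ref{lem:Omalg} to move $\Omega$ through the product, $\Omega(\chi_U) * \Omega(\chi_V) = \Omega(\chi_U \cdot \chi_V)$. Third, since $\Cc$ is factorisable, condition 4 of Theorem \ref{thm:factequiv} guarantees that $\Omega$ is an isomorphism, so $\Omega^{-1}$ exists; applying it yields
\be
	\Omega^{-1}\big( \, \Omega(\chi_U) * \Omega(\chi_V) \, \big)
	~=~ \chi_U \cdot \chi_V
	~=~ \sum_{W \in \Irr(\Cc)} N_{U,V}^{~W}\, \chi_W \ ,
\ee
which is exactly the claim.

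I do not expect a substantive obstacle here, since all the weight is carried by the cited lemmas; the corollary is essentially a repackaging of ``$\Omega$ is an algebra isomorphism.'' The two points that genuinely need to be in place — and which I would double-check rather than reprove — are that the ``algebra map'' property of Lemma \ref{lem:Omalg} is asserted precisely for the source $(\CF(\Cc),\cdot)$ and target $(\CE(\Cc),*)$ appearing in the statement, and that invertibility of $\Omega$ is available, for which one must assume (or recall) that $\Cc$ is factorisable. If one wanted a self-contained derivation instead of citing Lemma \ref{lem:Omalg}, the only real work would be verifying that $\omega$ being a Hopf pairing, together with the commutativity of $\CE(\Cc)$ from Lemma \ref{lem:EndId-CE}, forces $\Omega(a \cdot b) = \Omega(a) * \Omega(b)$; that string-diagram computation is the actual content, and it is exactly what Lemma \ref{lem:Omalg} has already dispatched.
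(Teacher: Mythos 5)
Your proposal is correct and matches the paper's own (implicit) argument exactly: the paper presents the corollary as an immediate consequence of the multiplicativity of $\chi$ (the algebra-map lemma cited just before it), the multiplicativity of $\Omega$ (Lemma \ref{lem:Omalg}), and the invertibility of $\Omega$ guaranteed by factorisability via condition 4 of Theorem \ref{thm:factequiv}. Your additional remark --- that the only substantive content is the Hopf-pairing computation already dispatched by Lemma \ref{lem:Omalg}, and that factorisability must be in force for $\Omega^{-1}$ to exist --- is precisely the right bookkeeping.
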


To get a variant of the Verlinde formula, our next point is to introduce the modular $S$-transformation on $\End(\Id_\Cc)$. 
	To do so, we will need the isomorphism 
\be\label{eq:R-map-def}
	\Rad : \CE(\Cc) \longrightarrow \CF(\Cc)
	\quad , \quad
	f \longmapsto (f \ot \id) \circ \Delta_L \circ \Lambda_L \ .
\ee
	The inverse of $\Rad$ can be given explicitly in terms of the cointegral $\lambda_L : L \to \one$ of $L$ obtained 
from $\Lambda_L$ via $\lambda_L = \Omega(\Lambda_L)$. 
Note that from the normalisation condition on $\Lambda_L$, we have $\lambda_L \circ \Lambda_L = \id_\one$. For the inverse of $\Rad$ one finds
\be
\Rad^{-1}(a) = \big[\, L \xrightarrow{\sim} \one L \xrightarrow{a \ot \id} LL
\xrightarrow{S_L \ot \id} LL \xrightarrow{\mu_L} L \xrightarrow{\lambda_L} \one \,\big] \ ,
\ee
see e.g.\ \cite[Cor.\,4.2.13]{Kerler:2001}. We remark that $\Rad$ is in general not an algebra map. This can be seen explicitly in the example in Section \ref{sec:sf}.

\medskip

We define the map $S_\Cc : \End(\Id_\Cc) \to \End(\Id_\Cc)$ as
\be\label{eq:SC-def}
S_\Cc ~=~ \big[\, \End(\Id_\Cc) \xrightarrow{\,\psi\,} \CE(\Cc) \xrightarrow{~\Rad~} \CF(\Cc) \xrightarrow{~\Omega~} \CE(\Cc) \xrightarrow{~\psi^{-1}~} \End(\Id_\Cc) \, \big] \ .
\ee
We have seen that $\psi$ and $\Omega$ are algebra maps, while $\Rad$ is in general not.
Thus in general $S_\Cc$ is not an algebra map, either.

\begin{remark}
In the application to mapping class groups developed in \cite{Lyubashenko:1995,Lyubashenko:1994tm}, 
$\CF(\Cc) = \Cc(\one,L)$ is the vector space associated to a torus and carries a projective $SL(2,\Zb)$ action. The operator implementing the modular $S$-transformation on $\CF(\Cc)$ is given by 
$S_{\CF} := \Rad \circ \Omega : \CF(\Cc) \to \CF(\Cc)$, which, when transported to $\End(\Id_\Cc)$ via 
$(\Rad \circ \psi)^{-1}$, results in $S_\Cc$. 
\end{remark}

Define, 	for $V \in \Cc$,
\be\label{eq:tildechi-def}
\tildechi_V := \psi^{-1}(\Rad^{-1}(\chi_V)) \in \End(\Id_\Cc)\ .
\ee
Since $\psi$ and $\Rad$ are isomorphisms, by Theorem~\ref{thm:chiinj} the set $\{\, \tildechi_U \,|\, U \in \Irr(\Cc)\,\}$ is linearly independent.
We can now restate Corollary~\ref{cor:ver} as the following theorem, which is the Verlinde-type formula we wish to employ later:

\begin{theorem}\label{thm:ver}
Let $\Cc$ be a factorisable pivotal finite tensor category. Then the structure constants of $\Gr(\Cc)$ can be uniquely recovered from knowing
\begin{itemize}
\item the algebra $\End(\Id_\Cc)$,
\item the elements $\tildechi_U \in \End(\Id_\Cc)$ for $U \in \Irr(\Cc)$,
\item the linear map $S_\Cc : \End(\Id_\Cc) \to \End(\Id_\Cc)$,
\end{itemize}
via, for $U,V \in \Irr(\Cc)$,
\be\label{eq:ver}
	S_\Cc^{-1}\big( \, S_\Cc(\tildechi_U) \circ S_\Cc(\tildechi_V)\, \big)
	~=~
	\sum_{W \in \Irr(\Cc)} N_{U,V}^{~W} \, \tildechi_W \ .
\ee
\end{theorem}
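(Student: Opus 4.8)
The plan is to reduce the statement to Corollary~\ref{cor:ver} by a direct unwinding of the definitions, exploiting the fact that the map $\tildechi_V = \psi^{-1}(\Rad^{-1}(\chi_V))$ and the map $S_\Cc = \psi^{-1}\circ\Omega\circ\Rad\circ\psi$ are built from the same four ingredients $\psi,\Omega,\Rad,\chi$, with the non-multiplicative map $\Rad$ entering the two in inverse ways. The crucial observation is that $\Rad$ therefore cancels completely, so its failure to be an algebra map never obstructs the argument. Concretely, I would first compute, for $U\in\Irr(\Cc)$,
\[
S_\Cc(\tildechi_U) \;=\; \psi^{-1}\big(\Omega(\Rad(\psi(\psi^{-1}(\Rad^{-1}(\chi_U)))))\big) \;=\; \psi^{-1}(\Omega(\chi_U)),
\]
where the inner $\psi\circ\psi^{-1}$ and $\Rad\circ\Rad^{-1}$ cancel and remove every occurrence of $\Rad$.

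Next I would transport the composition product along $\psi$. By Lemma~\ref{lem:EndId-CE}, $\psi$ is an isomorphism of $k$-algebras from $(\End(\Id_\Cc),\circ)$ to $(\CE(\Cc),*)$, so applying $\psi$ to the composite and cancelling $\psi\circ\psi^{-1}$ gives
\[
\psi\big(S_\Cc(\tildechi_U)\circ S_\Cc(\tildechi_V)\big) \;=\; \Omega(\chi_U) * \Omega(\chi_V),
\]
and hence $S_\Cc(\tildechi_U)\circ S_\Cc(\tildechi_V) = \psi^{-1}\big(\Omega(\chi_U)*\Omega(\chi_V)\big)$.

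Finally I would apply $S_\Cc^{-1} = \psi^{-1}\circ\Rad^{-1}\circ\Omega^{-1}\circ\psi$, valid since $\psi$, $\Omega$ (by factorisability, Theorem~\ref{thm:factequiv}) and $\Rad$ are isomorphisms. The outer $\psi\circ\psi^{-1}$ cancels once more, leaving $\psi^{-1}\big(\Rad^{-1}(\Omega^{-1}(\Omega(\chi_U)*\Omega(\chi_V)))\big)$. Now Corollary~\ref{cor:ver} identifies $\Omega^{-1}(\Omega(\chi_U)*\Omega(\chi_V))$ with $\sum_{W}N_{U,V}^{~W}\,\chi_W$, and using linearity of $\psi^{-1}\circ\Rad^{-1}$ together with $\psi^{-1}(\Rad^{-1}(\chi_W))=\tildechi_W$ I would distribute over the sum to obtain $\sum_{W}N_{U,V}^{~W}\,\tildechi_W$, which is \eqref{eq:ver}. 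Since the $\tildechi_U$ are linearly independent (as noted after \eqref{eq:tildechi-def}), the $N_{U,V}^{~W}$ are uniquely recoverable from the left-hand side, giving the asserted recovery statement.

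As for the main difficulty: there is essentially no analytic obstacle, since the theorem is a formal consequence of the structural results already assembled — Shimizu's statements packaged in Theorem~\ref{thm:chiinj}, the algebra-map properties of $\psi$, $\Omega$ and $\chi$, and factorisability ensuring $\Omega$ is invertible. The only genuine subtlety is bookkeeping: one must track carefully which of $\psi,\Omega,\Rad,\chi$ respects products and which does not, and verify that every appearance of the non-multiplicative $\Rad$ is matched by an inverse. I expect the cancellation $S_\Cc(\tildechi_U)=\psi^{-1}(\Omega(\chi_U))$ to be the conceptual heart of the argument, as it is precisely the reason the definitions of $\tildechi_V$ and $S_\Cc$ were arranged as they are.
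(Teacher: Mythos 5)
Your proof is correct and follows essentially the same route as the paper, which presents Theorem~\ref{thm:ver} precisely as a restatement of Corollary~\ref{cor:ver} obtained by transporting along the isomorphism $\psi^{-1}\circ\Rad^{-1}$, with the cancellation $S_\Cc(\tildechi_U)=\psi^{-1}(\Omega(\chi_U))$ you identify as the conceptual heart recorded explicitly in point~2 of Remark~\ref{rem:ver}. Your bookkeeping of which maps among $\psi$, $\Omega$, $\Rad$, $\chi$ are multiplicative, and the appeal to linear independence of the $\tildechi_U$ for uniqueness of the $N_{U,V}^{~W}$, match the paper's argument exactly.
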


We state the theorem in this form, because it matches the data used in the conjectural modular Verlinde formula in Section \ref{sec:voa}.

\begin{remark}\label{rem:ver}
	Let $\Cc$ be as in Theorem \ref{thm:ver}.
\begin{enumerate}
\setlength{\leftskip}{-1em}
\item Equation~\eqref{eq:ver} shows in particular that the linear span of the $S_\Cc(\phi_U)$, $U \in \Irr(\Cc)$, is a 
subalgebra of $\End(\Id_\Cc)$. Thus, to evaluate \eqref{eq:ver} it is enough to know $S_\Cc$ on the linear span of the $\phi_U$. By Theorem \ref{thm:chiinj}, in the non-semisimple case this span is a proper subspace of $\End(\Id_\Cc)$.

\item Substituting the definitions, one checks that the natural transformations 
$S_\Cc(\tildechi_V) = \psi^{-1}(\Omega(\chi_V))$ 
are given by
\begin{align}
 S_\Cc(\tildechi_V)_X
=
\big[
& X 
\xrightarrow{\sim} \one X
\xrightarrow{\widetilde\coev_{V} \ot \id} (V^{*} V) X
\xrightarrow{\sim} V^{*} (V X)
\xrightarrow{\id \ot (c^{-1}_{V,X} \circ c^{-1}_{X,V})} V^{*} (V X)
\nonumber \\ 
& \quad \xrightarrow{\sim} (V^{*} V) X
\xrightarrow{\ev_{V} \otimes \id} \one X
\xrightarrow{\sim}X
\big] \ .
\label{eq:SC-on-tilde-chi}
\end{align}
In particular, $S_\Cc(\tildechi_\one)_X = \id_X$ for all $X$.
Note that $\chi_{\one}$ is the unit $\eta_L$ in $L$ and thus $\psi(\tildechi_{\one}) = \Rad^{-1}(\chi_{\one}) = \lambda_L$ is the cointegral of $L$. Therefore, the operator implementing the modular $S$-transformation on $\CE(\Cc)$ given by $S_{\CE}:= \Omega \circ \Rad: \CE(\Cc) \to \CE(\Cc)$ maps $\lambda_L$ to the unit in $\CE(\Cc)$ which is $\varepsilon_L$. We also have $S_{\CE}(\varepsilon_L) = \lambda_L$ (indeed, $\Rad(\varepsilon_L)=\Lambda_L$ and $\Omega(\Lambda_L)=\lambda_L$). Thus $S_{\CE}^2$ applied to the unit or the cointegral is the identity.
We note that in general the square of the $S$-transformation 
$S_{\CF}=\Rad\circ\Omega$ 
 acts as $\chi_V \mapsto \chi_{V^*}$.

\item
Suppose that $\Cc$ is 
semisimple (in case $\Cc$ is ribbon, this means that $\Cc$ is a modular tensor category).
Then $L = \bigoplus_{U \in \Irr(\Cc)} U^* \ot U$
 and we denote the embedding and projection morphisms by $e_{U}: U^*\otimes U\to L$ and $p_U: L\to U^*\otimes U$. 
The internal characters are then given by $\chi_U = e_U\circ  \widetilde\coev_U$.
 The  integral $\Lambda_L$ and cointegral $\lambda_L = \Omega(\Lambda_L)$ are found to be
\be\label{eq:ssi-LamL}
	\Lambda_L = (\mathrm{Dim}\,\Cc)^{-\frac12} \!\! \sum_{U \in \Irr(\Cc)} \!\!\!\dim(U) \, e_{U} \circ \widetilde\coev_U
	\quad , \quad
	\lambda_L = (\mathrm{Dim}\,\Cc)^{\frac12} \ev_\one \circ\,  p_{\one} \ .
\ee
The normalisation condition $\omega \circ (\Lambda_L \ot \Lambda_L) = \id_\one$ determines $\Lambda_L$ up to a sign. This amounts to fixing a choice of square root $(\mathrm{Dim}\,\Cc)^{1/2}$. The maps $\psi^{-1} : \CE(\Cc)\to\End(\Id_\Cc)$ and $\Rad : \CE(\Cc) \to \CF(\Cc)$ are given by, for $U,X \in \Irr(\Cc)$,
\be
\psi^{-1}(\ev_U \circ p_U)_X = \delta_{U,X} \, \id_X
\quad , \quad
\Rad(\ev_U \circ p_U) = \frac{\dim(U)}{(\mathrm{Dim}\,\Cc)^{\frac12}} \, e_U \circ \widetilde\coev_U \ .
\ee
Using this, one finds that the elements $\tildechi_U \in \End(\Id_\Cc)$ are, for $U,X \in \Irr(\Cc)$,
\be\label{eq:ssi-tildechi}
(\tildechi_U)_X = \delta_{U,X} \, \frac{(\mathrm{Dim}\,\Cc)^{\frac12}}{\dim(U)} \, \id_X  \ .
\ee
The action of $S_\Cc$ from \eqref{eq:SC-on-tilde-chi} becomes, for $U,X \in \Irr(\Cc)$,
\be\label{eq:ssi-S-action}
 S_\Cc(\tildechi_U)_X
	= \frac{s_{U^*,X}}{s_{\one,X}} \, \id_X \ ,
\ee
where the $s_{A,B} \in k$ are determined by the categorical trace via $s_{A,B} \, \id_\one = \mathrm{tr}(c_{B,A} \circ c_{A,B})$. From this, \eqref{eq:ver} produces the usual semisimple categorical Verlinde formula, see e.g.\ \cite[Thm.\,4.5.2]{tur}.

\item
Let $G$ be a projective generator in $\Cc$, for example $G = \bigoplus_{U \in \Irr(\Cc)} P_U$, with $P_U$ the projective cover of $U$. Then $\End(\Id_\Cc) \to Z(\End(G))$, $\eta \mapsto \eta_G$, is an isomorphism of $k$-algebras. 
We can group the $P_U$ according to the block of $\Cc$ they belong to. (By a block we mean the full subcategory given by an equivalence class of objects with respect to the equivalence relation generated by $X \sim Y$ if there is a nonzero morphism $X \to Y$.) Then $\End(\Id_\Cc) \cong Z(\End(G))$ has the block-decomposition
\be
	\End(\Id_\Cc) \cong \bigoplus_{\beta \in \text{blocks}} Z(\End(G_\beta))
	\qquad , ~~\text{where}~~
	G_\beta ~= \hspace{-1.5em}\bigoplus_{U\in\Irr(\Cc) \text{ is in block $\beta$}} \hspace{-1.5em} P_U \ ,
\ee
as $k$-algebras. In this sense, $S_\Cc$ block-diagonalises 
the fusion rules (that is, the structure constants of $\Gr(\Cc)$). 
In particular, in the semisimple case, $P_U=U$ and the fusion rules are fully diagonalised.

\item
	In the non-semisimple case, the idea that the $S$-transformation 
block-diagonalises the fusion rules was explored further in~\cite[Thm.\,4.1.1]{Gainutdinov:2007tc} where a Verlinde formula similar to~\eqref{eq:ver} was also stated. The Verlinde-type formula~\cite[Eqn.\,(4.4)]{Gainutdinov:2007tc} expresses the structure constants $N_{U,V}^W$ in terms of  structure constants for multiplication in the block-diagonalised basis, which contains the $\tildechi_V$ as basis elements. 
Using $S^2_\Cc(\id_{\Cc})=\id_{\Cc}$,
it was also shown that the structure constants for multiplication in the block-diagonalised basis
satisfy a system of linear equations with coefficients from the ``vacuum'' row of the $S$-matrix (the coefficients in the $S$-transformation of the unit), see \cite[Eqn.\,(4.5)]{Gainutdinov:2007tc} for more details. 

\item A result related to Theorem \ref{thm:ver} exists for factorisable ribbon Hopf algebras \cite[Thm.\,3.14]{Cohen:2008}. There, the structure constants are restricted to a subspace (the Higman ideal) and then diagonalised. In the non-semisimple case, the Higman ideal is strictly smaller than the Reynolds ideal
 (which is dual to the space spanned by characters of irreducible representations) \cite[Cor.\,2.3]{Cohen:2008} and the diagonalised form does not allow one to recover the structure constants.
\end{enumerate}
\end{remark}

\section{Pseudo-trace functions for modules of a VOA}\label{sec:ptfun}

We briefly review torus one-point functions of a VOA and their modular properties \cite{Zhu1996,Miyamoto:2002ar}, as well as the construction of such one-point functions via pseudo-traces by Arike and Nagatomo \cite{Arike:2011ab}.

\medskip

Let $\VOA = (\VOA,Y,1,\omega)$ be a VOA. In the seminal paper \cite{Zhu1996}, Zhu starts by defining a functor $\VOA \mapsto A(\VOA)$ from VOAs to $\Cb$-algebras. The algebra $A(\VOA)$ is now called Zhu's algebra. Zhu proved that there is a one-to-one correspondence between irreducible $\VOA$-modules\footnote{
We will not give many details on VOAs and their modules. We just mention that in this section ``module'' stands for ``admissible module'' aka ``$\Zb_{\ge 0}$-gradable weak module''.}
} and irreducible $A(\VOA)$-modules. Furthermore, in \cite{Zhu1996} an important finiteness condition on $\VOA$ is introduced, now called $C_2$-cofiniteness (see e.g.\ \cite{Miyamoto:2002ar} for more details and references). For a $C_2$-cofinite VOA $\VOA$, $A(\VOA)$ is finite-dimensional \cite[Prop.\,3.6]{Dong:1997ea}. Consequently, $\VOA$ only has finitely many simple modules.

\medskip

For a $C_2$-cofinite VOA $\VOA$, Zhu introduces the $\Cb$-vector space of torus 1-point functions $C_1(\VOA)$ (see \cite{Zhu1996,Dong:1997ea} and e.g.\ \cite[Sect.\,5]{Miyamoto:2002ar}). An element $\xi \in C_1(\VOA)$ is a function
\be 
\xi ~:~ \VOA \times \mathbb{H} \longrightarrow \Cb \ ,
\ee
which is linear in $\VOA$, analytic on the upper half plane $\mathbb{H}$, and subject to further conditions for which we refer to \cite{Zhu1996,Dong:1997ea}.
The space $C_1(\VOA)$ is finite-dimensional and invariant under modular transformation in the following sense: Let $\gamma = \big( \begin{smallmatrix} a & b \\ c & d \end{smallmatrix} \big) \in SL(2,\Zb)$ and $\xi \in C_1(\VOA)$. For $v \in \VOA_{[h]}$ (see \cite{Zhu1996,Dong:1997ea} for this grading on $\VOA$) define
\be\label{eq:modxfer}
   \xi|_\gamma(v,\tau) ~:=~
  (c\tau + d)^{-h} \,\xi\big( v , \tfrac{a\tau + b}{c \tau +d} \big)
\ee
and extend linearly. Then (\cite{Zhu1996}, see \cite[Thm.\,5.4]{Dong:1997ea} for this version):

\begin{theorem}\label{thm:C1-mod-inv}
$\xi|_\gamma \in C_1(\VOA)$.
\end{theorem}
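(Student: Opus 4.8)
The plan is to verify modular invariance directly against the conditions that cut $C_1(\VOA)$ out of the space of all $\tau$-holomorphic, $\VOA$-linear functions, exploiting the fact that those conditions are built from the Eisenstein series, whose modular transformation behaviour is classical. Recall that, beyond linearity and holomorphy, membership in $C_1(\VOA)$ is governed by Zhu's recursion relations, phrased in terms of the square-bracket VOA structure $(Y[\,\cdot\,],\,L[n],\,\VOA_{[h]})$, schematically
\be
  \xi(u[-1]v, \tau) ~=~ \sum_{k \geq 1} G_{2k}(\tau)\, \xi(u[2k-1]v, \tau) \ ,
\ee
together with the translation condition $\xi(L[-1]v, \tau) = 0$ and a conformal condition tying $\xi(L[0]v,\tau)$ to $\tfrac{1}{2\pi i}\partial_\tau \xi(v,\tau)$ (itself carrying Eisenstein corrections), where the $G_{2k}$ are the normalised Eisenstein series. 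The entire content of the theorem is that this family of relations is preserved under $\xi \mapsto \xi|_\gamma$.

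Since $SL(2,\Zb)$ is generated by $S = \big(\begin{smallmatrix} 0 & -1 \\ 1 & 0 \end{smallmatrix}\big)$ and $T = \big(\begin{smallmatrix} 1 & 1 \\ 0 & 1 \end{smallmatrix}\big)$, and since $\gamma \mapsto (\xi \mapsto \xi|_\gamma)$ is a genuine action via the automorphy cocycle $j(\gamma,\tau)=c\tau+d$, I would only check stability of $C_1(\VOA)$ under these two generators. Linearity and holomorphy are immediate in either case, because $\tau \mapsto \gamma\tau$ is a biholomorphism of $\mathbb{H}$ and $(c\tau+d)^{-h}$ is holomorphic and nowhere vanishing on $\mathbb{H}$. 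The $T$-case, $\tau \mapsto \tau+1$, is essentially formal: the Eisenstein series are $1$-periodic and the prefactor is trivial, so each recursion relation is manifestly carried to itself.

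The substantive case is $S$, which combines $\tau \mapsto -1/\tau$ with the prefactor $(c\tau+d)^{-h} = \tau^{-h}$. Here I would insert the transformation laws $G_{2k}(-1/\tau) = \tau^{2k} G_{2k}(\tau)$ for $k \geq 2$, substitute $\tau \mapsto -1/\tau$ into the recursion satisfied by $\xi$, and multiply through by the power of $\tau$ dictated by the square-bracket weights of $u[-1]v$ and $u[2k-1]v$; the weight bookkeeping in \eqref{eq:modxfer} is arranged precisely so that this reproduces, term by term, the recursion for $\xi|_S$. The main obstacle, and the technical heart of Zhu's original argument, is that $G_2$ is not a genuine modular form: it obeys the quasi-modular law $G_2(-1/\tau) = \tau^2 G_2(\tau) + (\text{anomalous term})$. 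I would show that this anomaly is cancelled exactly by the weight-$2$ contribution coming from the $L[-1]$-vanishing and $L[0]$-derivative conditions (this is also where the Weierstrass functions $\wp,\zeta$ underlying the $G_{2k}$ enter, through the quasi-periodicity that produces the anomaly in the first place). The upshot is that $\xi|_S$ again satisfies all of Zhu's relations, hence lies in $C_1(\VOA)$; combined with the $T$-case and the group relations, this yields $\xi|_\gamma \in C_1(\VOA)$ for every $\gamma \in SL(2,\Zb)$.
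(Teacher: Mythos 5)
You are reconstructing a theorem for which the paper itself supplies no proof at all: it is imported from Zhu \cite{Zhu1996}, in the formulation of \cite[Thm.\,5.4]{Dong:1997ea}, so your proposal has to be measured against the original argument. At the level of strategy it matches that argument exactly: $C_1(\VOA)$ is cut out, inside the linear-in-$v$ and $\tau$-holomorphic functions, by finitely many families of square-bracket relations with Eisenstein coefficients; one checks stability under the generators $T$ and $S$ only (legitimate, since $\xi \mapsto \xi|_\gamma$ is a right action via the cocycle $c\tau+d$); the $T$-case is trivial by $1$-periodicity; for $S$ one feeds in $G_{2k}(-1/\tau) = \tau^{2k} G_{2k}(\tau)$ for $k \ge 2$; and the whole difficulty is the quasi-modular anomaly of $G_2$. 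All of this is the architecture of Zhu's proof.

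The genuine gap is that the relations you propose to verify are not the relations defining $C_1(\VOA)$, and the one you display is false. Zhu's recursion for $u[-1]v$ holds for honest trace functions and reads $\mathrm{tr}_M\,o(u[-1]v)\,q^{L_0-c/24} = \mathrm{tr}_M\,o(u)o(v)\,q^{L_0-c/24} + \sum_{k\ge1} G_{2k}(\tau)\,\mathrm{tr}_M\,o(u[2k-1]v)\,q^{L_0-c/24}$; the two-insertion term is not a one-point function of any vector and cannot be dropped (it is generically nonzero, e.g.\ $u=v=\omega$ gives $\mathrm{tr}_M\,L_0^{\,2}\,q^{L_0-c/24}$), which is precisely why no $u[-1]$-relation appears among the defining conditions. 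With normalisations as in \cite{Zhu1996,Dong:1997ea}, the space is instead cut out by
\begin{align*}
&\xi(u[0]v,\tau) = 0 \ ,\\
&\xi(u[-2]v,\tau) + \sum_{k\ge2} (2k-1)\, G_{2k}(\tau)\, \xi(u[2k-2]v,\tau) = 0 \ ,\\
&\xi(L[-2]v,\tau) = \tfrac{1}{2\pi i}\,\partial_\tau\, \xi(v,\tau) + \sum_{k\ge1} G_{2k}(\tau)\, \xi(L[2k-2]v,\tau) \ ,
\end{align*}
where the first condition subsumes your translation condition (as $L[-1] = \tilde\omega[0]$ in the square-bracket structure), the second involves only the honestly modular $G_{2k}$, $k \ge 2$, and the third is the sole entry point of $G_2$. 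Consequently the anomaly cancellation also sits elsewhere than you place it: writing $G_2(-1/\tau) = \tau^2 G_2(\tau) + \alpha\tau$ with $\alpha \neq 0$, the anomalous term multiplies $\xi(L[0]v,\tau) = h\,\xi(v,\tau)$ and is absorbed by the extra summand produced when $\tfrac{1}{2\pi i}\partial_\tau$ hits the automorphy factor $\tau^{-h}$ in $\xi|_S$ — the $L[-1]$- (more generally $u[0]v$-) vanishing condition plays no role in this cancellation. With the condition set corrected your verification goes through and reproduces Zhu's proof; as written, the check is neither performable (the displayed recursion fails) nor sufficient to conclude membership in $C_1(\VOA)$.
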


\medskip

A VOA $\VOA$ is called {\em rational} if every $\VOA$-module is fully reducible. In analogy to semisimple rings, this already implies that $\VOA$ only has finitely many distinct irreducible modules \cite{Dong:1995}. Furthermore, in this case $A(\VOA)$ is finite-dimensional and semisimple.

For $\VOA$ $C_2$-cofinite and rational, Zhu \cite{Zhu1996} proved the remarkable result that the irreducible characters of $\VOA$ (with insertion of a zero-mode of $\VOA$) span $C_1(\VOA)$ and hence provide vector valued modular forms.
Miyamoto was able to generalise this considerably by dropping the rationality requirement: For a non-negatively graded $C_2$-cofinite VOA $\VOA$ for which every simple $\VOA$-module is infinite-dimensional (cf.\ \cite[Rem.\,3.3.5]{Arike:2011ab}), $C_1(\VOA)$ is spanned by so-called pseudo-trace functions \cite[Thm.\,5.5]{Miyamoto:2002ar}.

\medskip

The pseudo-trace functions in \cite{Miyamoto:2002ar} are defined in terms of $n$'th Zhu algebras. An easier version of pseudo-trace functions was introduced by Arike and Nagatomo \cite{Arike:2011ab}. It is proved in \cite[Thm.\,4.3.4]{Arike:2011ab} that these pseudo-trace functions lie in $C_1(\VOA)$, but it is not known if they form a spanning set.

We proceed to review the pseudo-trace functions of \cite{Arike:2011ab}.

\medskip

For a $k$-algebra $A$, denote by
\be
	C(A) ~=~ \big\{ \,\varphi : A \to k \,\big|\, \varphi(ab) = \varphi(ba) \text{ for all }a,b \in A \,\big\} 
\ee
the space of {\em central forms} on $A$. 
	By definition, for $\varphi \in C(A)$, the pairing $(a,b) \to \varphi(ab)$ is symmetric. If it is also non-degenerate, $\varphi$ turns $A$ into a symmetric Frobenius algebra.
We recall the following simple lemma (see e.g.\ \cite[Lem.\,2.5]{Broue:2009}):

\begin{lemma}\label{lem:ZA-CA-iso}
Let $A$ be a symmetric Frobenius algebra over $k$. Let $\eps : A \to k$ induce the non-degenerate pairing on $A$. Then the map $Z(A) \to C(A)$, $z \mapsto \eps(z \cdot (-))$, is an isomorphism.
\end{lemma}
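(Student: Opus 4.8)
The plan is to recognise the claimed map as the restriction of the standard isomorphism $A \xrightarrow{\sim} A^*$ induced by a non-degenerate bilinear form, and then to pin down its image. Concretely, write $\langle a,b\rangle := \eps(ab)$ for the pairing. By hypothesis this pairing is symmetric, i.e.\ $\eps(ab)=\eps(ba)$ for all $a,b$, and non-degenerate; since $A$ is finite-dimensional, the linear map
\be\label{eq:Phi-iso-plan}
	\Phi : A \longrightarrow A^* \quad , \quad x \longmapsto \eps(x \cdot (-))
\ee
is therefore a $k$-linear isomorphism. The map in the statement, $z \mapsto \eps(z\cdot(-))$, is precisely $\Phi$ restricted to the subspace $Z(A)$, regarded as landing in $C(A)\subseteq A^*$. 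So the whole lemma reduces to the single claim that $\Phi$ carries $Z(A)$ \emph{onto} $C(A)$; injectivity then comes for free from \eqref{eq:Phi-iso-plan}.

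First I would check the easy inclusion $\Phi(Z(A)) \subseteq C(A)$, i.e.\ well-definedness. Given $z \in Z(A)$ and $a,b \in A$, I compute $\eps(zab)$ and $\eps(zba)$ and show they agree: centrality gives $zba = bza$, and symmetry of $\eps$ turns $\eps(bza)$ into $\eps(zab)$. Hence $\eps(z\cdot(-)) \in C(A)$. Linearity of $z \mapsto \eps(z\cdot(-))$ is immediate, and injectivity is immediate from the non-degeneracy underlying \eqref{eq:Phi-iso-plan}: if $\eps(za)=0$ for all $a$ then $z=0$.

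The one step with actual content is surjectivity, and this is where I expect the main (mild) obstacle to lie, since it requires showing that the preimage of a central form is automatically \emph{central}. Given $\varphi \in C(A)$, I use that $\Phi$ is onto to pick the unique $z := \Phi^{-1}(\varphi) \in A$ with $\varphi = \eps(z\cdot(-))$, and then argue $z \in Z(A)$. For any $a,b\in A$ the central-form property and symmetry give
\be
	\eps(zab) = \varphi(ab) = \varphi(ba) = \eps(zba) = \eps(azb) \ ,
\ee
where the last equality is symmetry applied to the pair $(a, zb)$. Rearranging yields $\eps\big((za-az)\,b\big) = 0$ for all $b \in A$, so non-degeneracy forces $za = az$ for every $a$, i.e.\ $z \in Z(A)$. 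This shows $C(A) \subseteq \Phi(Z(A))$, completing the identification $\Phi(Z(A)) = C(A)$ and hence establishing that $z \mapsto \eps(z\cdot(-))$ is an isomorphism $Z(A) \xrightarrow{\sim} C(A)$. The key inputs throughout are exactly the two defining features of a symmetric Frobenius algebra — symmetry and non-degeneracy of $\eps(\,\cdot\,\cdot\,)$ — and the argument is otherwise purely formal.
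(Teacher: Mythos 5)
Your proof is correct and coincides with the standard argument: the paper gives no proof of its own but cites \cite[Lem.\,2.5]{Broue:2009}, where the same reasoning appears -- the Frobenius form yields an isomorphism $\Phi : A \xrightarrow{\sim} A^*$, centrality of $z$ together with symmetry of $\eps$ shows $\Phi(Z(A)) \subseteq C(A)$, and non-degeneracy forces the preimage of a central form to be central, exactly as in your computation $\eps\big((za-az)b\big)=0$. Nothing further is needed.
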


Let 
$\varphi \in C(A)$ be a central form. For each finitely-generated projective $A$-module $P$, we define a trace function $t^\varphi_P : \End_A(P) \to k$,
	also called a Hattori-Stallings trace \cite{Hattori:1965,Stallings:1965},
as follows.
Choose a finite-dimensional vector space $X$ such that there is a surjective $A$-module map $\pi : A \ot X \to P$. Since $P$ is projective, 
there is an $A$-module map $\iota : P \to A \ot X$ which is right-inverse to $\pi$, i.e.\ $\pi \circ \iota = \id_P$. 
Define $t^\varphi_{P}(f)$ as the image of $1 \in A$ under the map
\begin{align}
	& A = A \ot k 
	\xrightarrow{\id \ot \coev_X} A \ot X \ot X^*
	\xrightarrow{\pi \ot \id} P \ot X^*
	\xrightarrow{f \ot \id} P \ot X^*
\nonumber\\&
	\xrightarrow{\iota \ot \id} A \ot X \ot X^*
	\xrightarrow{\id \ot \widetilde\ev_X} A \ot k = A
	\xrightarrow{\varphi} k \ .
\label{eq:trace-map-def}
\end{align}
One proves that this definition is independent of the choice of $X$, $\pi$ and $\iota$ (see e.g.\ \cite{Arike:2010} and references therein).
It is not difficult to verify the following properties of $t^\varphi$ (all tensor products are over $k$):
\begin{enumerate}
\item For all finitely-generated projective $A$-modules $P$, finite-dimensional $k$-vector spaces $W$ and $f \in \End_A(P \ot W)$ we have that $t^\varphi_{P \ot W}(f)$ is equal to $t^\varphi_{P}$ applied to the partial trace
\be
	\big[\,
	P = P \ot k
	\xrightarrow{\id \ot \coev_W} P \ot W \ot W^*
	\xrightarrow{f \ot \id} P \ot W \ot W^*
	\xrightarrow{\id \ot \widetilde\ev_W} P \ot k = P
	\,\big] \ .
\ee
\item For all finitely-generated projective $A$-modules $P,Q$ and $f : P \to Q$, $g:  Q \to P$ we have
\be
	t^\varphi_Q(f \circ g) = t^\varphi_P(g \circ f) \ .
\ee
\end{enumerate}
For example, 
property 1 follows if, given $A \ot X \xrightarrow{\pi} P \xrightarrow{\iota} A \ot X$ one chooses $X' = X \ot W$, $\pi' = \pi \ot \id_W$, $\iota' = \iota \ot \id_W$. 

\medskip

Let $\VOA$ be a $C_2$-cofinite and non-negatively-graded VOA. The {\em zero mode} $o(v) \in \End(\VOA)$ of a homogeneous element $v \in \VOA_h$ ($h \in \Zb_{\ge 0}$) is defined as the coefficient
\be
	Y(v,x) ~=~ o(v) \, x^{-h} + \text{ (other powers of $x$) } \ .
\ee
We extend $o(v)$ linearly to all of $\VOA$.
Since $[L_0,o(v)]=0$, when acting on a $\VOA$-module, $o(v)$ will preserve generalised $L_0$-eigenspaces.

Let $M = \bigoplus_{h \in \Cb} M_h$ be a finitely-generated $\VOA$-module. The grading is by generalised $L_0$-eigenspaces, and $M_h$ is non-zero only for a countable subset of $\Cb$. Since $M$ is finitely-generated, each $M_h$ is finite-dimensional
(see \cite{Gaberdiel:2000qn} and \cite[Lem.\,2.4]{Miyamoto:2002ar}). 

Let $E := \End_\VOA(M)$ be the $k$-algebra of $\VOA$-intertwiners of $M$. Then $M$ is an $E$-module. 
Suppose further that $M$ is projective as an $E$-module.\footnote{
Instead of putting a condition on $M$,
in \cite{Arike:2011ab}, a subalgebra of $E$ such that $M$ is projective over that subalgebra is used instead (called `projective commutatant' there).
We omit passing to a subalgebra here because in Section \ref{sec:voa} we will need all of $\End_\VOA(M)$ for a projective generator $M$. In this case projectivity as an $\End_\VOA(M)$ is automatic (Proposition \ref{prop:G-EndG-is-proj}).}
Write $c$ for the central charge of $\VOA$ and fix $\varphi \in C(E)$. The {\em pseudo-trace function} $\xi^\varphi_M$ is defined as, for $v \in V$, $\tau \in \mathbb{H}$,
\be\label{def:pseudo-trace}
	\xi^\varphi_M(v,\tau)
	\,=\, t^\varphi_M\big(\,o(v) \,e^{2 \pi i \tau (L_0 - c/24)}\, \big)
	\,=\,
	\sum_{h \in \Cb}
	t^\varphi_{M_h}\big(\,o(v)\, e^{2 \pi i \tau (L_0 - c/24)}\,\big) \, 
	 \ ,
\ee
where 
$M_h$ is obviously a finitely-generated $E$-module and
 the last expression serves as a definition of the trace over the typically not-finitely-generated $E$-module $M$. The sum in the last term is countable (as $M$ is finitely generated as a $\VOA$-module). 
We have:

\begin{proposition}[{\cite{Miyamoto:2002ar}, \cite[Thm.\,4.3.4]{Arike:2011ab}}]
\label{prop:pt-torus1}
Let $\VOA$ be $C_2$-cofinite and non-negatively graded, and let $M$ be a finitely generated $\VOA$-module that is projective as an $\End_\VOA(M)$-module. Then the assignment $\varphi \mapsto \xi^\varphi_M$ defines a map $C(\End_\VOA(M)) \to C_1(\VOA)$.
\end{proposition}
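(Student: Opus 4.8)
The plan is to verify that $\xi^\varphi_M$ satisfies each of the conditions that cut out $C_1(\VOA)$, by re-running Zhu's derivation for ordinary characters with the Hattori--Stallings trace $t^\varphi_M$ in place of the graded trace $\mathrm{Tr}_M$. These conditions are linearity in $\VOA$, holomorphicity in $\tau \in \mathbb H$, and a family of genus-one recursion relations expressing $\xi(a[n]b,\tau)$ through Eisenstein series, $\tau$-derivatives, and lower bracket-mode insertions. The guiding observation is that Zhu's derivation of these relations uses the trace only through its cyclic invariance $\mathrm{Tr}(AB)=\mathrm{Tr}(BA)$ together with the mode commutation relations of $\VOA$; since $t^\varphi$ is $k$-linear and cyclic (property 2 above), the algebraic content transfers unchanged, and $\varphi \mapsto \xi^\varphi_M$ is manifestly linear. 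The remaining work is then analytic.

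First I would make sense of the individual summands in \eqref{def:pseudo-trace}. Every $\phi \in E := \End_\VOA(M)$ commutes with $L_0$, so $E$ preserves each generalized $L_0$-eigenspace $M_h$ and the spectral projection $p_h$ is $E$-linear; hence $M_h$ is a direct summand of the projective $E$-module $M$, and therefore itself finitely-generated projective over $E$ (being finite-dimensional over $k$). Moreover $o(v)$ and $e^{2\pi i \tau (L_0 - c/24)}$ commute with $E$ and preserve $M_h$, so they restrict to elements of $\End_E(M_h)$ on which $t^\varphi_{M_h}$ is defined. Linearity of $\xi^\varphi_M(\,\cdot\,,\tau)$ in $\VOA$ then follows from linearity of $v \mapsto o(v)$ and of $t^\varphi$.

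For the recursion relations I would transcribe Zhu's computation verbatim. Because $o(\cdot)$ and $L_0$ act as $E$-module endomorphisms of each $M_h$, any step that rewrites a trace of a product of zero modes --- moving modes past one another by the commutator formula and closing the expression by cyclicity --- applies equally to $t^\varphi_{M_h}$, producing the same Eisenstein coefficients; summing over $h$ yields the recursion for $\xi^\varphi_M$. For holomorphicity, write $L_0 = h + N$ with $N$ nilpotent, so that each summand equals $q^{\,h-c/24}$ times a polynomial in $\tau$; using the $C_2$-cofiniteness bounds on $\dim_k M_h$ and on the Jordan block sizes of $L_0$, the resulting series converges locally uniformly on $\mathbb H$ and defines a holomorphic function.

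I expect the genuine obstacle to be this last, analytic, step: establishing local uniform convergence of the graded pseudo-trace on all of $\mathbb H$. Unlike the recursion, which is purely formal once cyclicity is granted, convergence requires the $C_2$-cofiniteness growth estimates on $\dim_k M_h$ --- this is where Miyamoto's analysis does its real work --- together with control of the nilpotent dressing $e^{2\pi i \tau N}$, a feature absent for ordinary diagonalizable characters. A secondary point requiring care is to confirm that no step of Zhu's recursion covertly uses finite-dimensionality or the trace being an honest trace rather than merely cyclic; this is routine but should be checked explicitly.
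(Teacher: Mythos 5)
The paper offers no proof of Proposition \ref{prop:pt-torus1}: it is imported from \cite{Miyamoto:2002ar} and \cite[Thm.\,4.3.4]{Arike:2011ab}, so the only meaningful comparison is with the proofs in those references --- and your outline reconstructs their strategy correctly. Your well-definedness step is right and is precisely what justifies the paper's rewriting of \eqref{def:pseudo-trace} as a sum over $h$: since every element of $E=\End_\VOA(M)$ commutes with $L_0$, each $M_h$ is an $E$-stable direct summand of the projective $E$-module $M$, hence finitely generated projective, and $o(v)\,e^{2\pi i\tau(L_0-c/24)}$ restricts to $\End_E(M_h)$. Your key observation --- that Zhu's recursion relations use the trace only through $k$-linearity and cyclicity --- is exactly the point of the Arike--Nagatomo ``symmetric linear function'' formalism, and it is why the paper takes care to record properties 1 and 2 of $t^\varphi$ in Section \ref{sec:ptfun}. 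One refinement worth making explicit: the recursion inserts non-zero modes, which are maps $M_h \to M_{h'}$ between \emph{distinct} graded pieces rather than endomorphisms of a single $M_h$, so it is the two-module form of cyclicity, $t^\varphi_Q(f\circ g)=t^\varphi_P(g\circ f)$, that does the work --- ordinary one-module cyclicity would not suffice.

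One inaccuracy in your analytic step: in \cite{Miyamoto:2002ar} convergence and holomorphy are not obtained from direct growth estimates on $\dim_k M_h$; rather, $C_2$-cofiniteness is used to show that the formal $q$-series satisfies a differential equation with regular singular points whose coefficients are polynomials in Eisenstein series, which yields convergence on all of $\mathbb{H}$ and simultaneously feeds into the modular invariance underlying Theorem \ref{thm:C1-mod-inv}. A direct growth-estimate route (subexponential bounds on $\dim_k M_h$ in the spirit of \cite{Gaberdiel:2000qn}, together with the polynomial-in-$\tau$ dressing from the nilpotent part of $L_0$, which lies in the finite-dimensional algebra $E$ and so has uniformly bounded nilpotency degree) can plausibly be made to work for convergence alone, but it is not the argument of the references. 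As a plan your proposal is sound; as a proof, the two steps you defer --- verifying the recursions line by line for pseudo-traces and the convergence analysis --- are exactly the substance of \cite{Miyamoto:2002ar,Arike:2011ab}.
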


No claim about injectivity or surjectivity (even as $M$ varies) is made in \cite{Arike:2011ab}.\footnote{However, surjectivity is shown for the (differently defined) pseudo-trace functions in \cite{Miyamoto:2002ar}.} This will necessitate Conjecture \ref{conj:C(End)-C1(V)-iso} below.

\section{Conjectural modular Verlinde formula}\label{sec:voa}

This section is the main part of the paper. We conjecture a relation between the categorical construction in Section \ref{sec:GrVer} and the modular properties of the pseudo-trace functions in Section \ref{sec:ptfun}. This allows one to compute the structure constants of the Grothendieck ring from the $S$-transformation of pseudo-trace functions.

\medskip

In this section, let $\VOA$ be a VOA which is
\begin{enumerate}
\item
$C_2$-cofinite,
\item non-negatively graded and satisfies $\VOA_0 = \Cb 1$ (this is sometimes called ``of CFT type'' or ``of positive energy''),
\item simple as a $\VOA$-module,
\item isomorphic as a $\VOA$-module to the contragredient module $\VOA'$ (this amounts to the existence of a non-degenerate invariant pairing).
\end{enumerate}

We define
\be 
	\Rep(\VOA)
\ee 
to be the category consisting of all $\VOA$-modules $M = \bigoplus_{h \in \Cb} M_h$, graded by generalised $L_0$-eigenspaces, with the following property:
For each real number $r$ the sum $\sum_{h \in \Cb, \text{Re}(h)<r} \dim M_h$ is finite. Such modules are called {\em quasi-finite-dimensional generalised $\VOA$-modules}, see \cite[Def.\,1.2]{Huang:2007mj}.

Collecting the results of Propositions 4.1, 4.3 and Theorems 3.24, 4.11 of 
\cite{Huang:2007mj}, which in turn depend on the logarithmic tensor product theory of \cite{Huang:2010}, we get:

\begin{theorem}[\cite{Huang:2007mj,Huang:2010}]
\label{thm:RepV-properties}
The category $\Rep(\VOA)$
\begin{enumerate}
\item
is $\Cb$-linear finite abelian,
\item
is braided monoidal with $\Cb$-bilinear tensor product functor,
\item
has a simple tensor unit.
\end{enumerate}
\end{theorem}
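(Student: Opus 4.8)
The plan is to assemble the three assertions from the established theory rather than to reprove them ab initio, since each is a known consequence of the logarithmic tensor product theory of Huang--Lepowsky--Zhang \cite{Huang:2010} together with the finiteness results extracted from $C_2$-cofiniteness in \cite{Huang:2007mj}. The first step is to verify that the four standing hypotheses on $\VOA$ ($C_2$-cofinite; non-negatively graded with $\VOA_0 = \Cb 1$; simple as a module; isomorphic to its contragredient) are precisely the input required by the cited results, so that their conclusions may be quoted directly for the category of quasi-finite-dimensional generalised modules of \cite[Def.\,1.2]{Huang:2007mj}. Everything then reduces to matching hypotheses to statements and checking that the pieces fit together into a single categorical structure.

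For item (1), the $\Cb$-linear abelian structure is essentially formal: kernels, cokernels, and finite direct sums of quasi-finite-dimensional generalised modules are again of this type, and the grading condition by generalised $L_0$-eigenspaces is preserved under these operations. The substantive content is \emph{finiteness} in the sense of a finite tensor category -- finitely many simple objects, finite-dimensional Hom-spaces, finite length, and enough projectives. I would draw this from $C_2$-cofiniteness via the finite-dimensionality of the (higher) Zhu algebras, as packaged in the relevant statements of \cite{Huang:2007mj}; concretely this identifies $\Rep(\VOA)$ with the category of finite-dimensional modules over a finite-dimensional associative $\Cb$-algebra, which is exactly the notion of $\Cb$-linear finite abelian category used in Section \ref{sec:non-deg}.

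For item (2), the braided monoidal structure is the analytically demanding ingredient and is precisely what the logarithmic tensor product theory supplies. I would invoke the $P(z)$-tensor product of generalised modules, the associativity isomorphism built from the convergence and analytic continuation of products and iterates of logarithmic intertwining operators, and the braiding induced by the monodromy $z \mapsto z e^{\pm i\pi}$; the relevant propositions and theorems of \cite{Huang:2007mj} assert that these data close up into a braided monoidal category with $\Cb$-bilinear tensor product functor. The verification here is limited to confirming that the convergence and the compatibility (pentagon/hexagon) conditions hold under our hypotheses, all of which are contained in the quoted results.

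Item (3) is immediate: the tensor unit is $\VOA$ viewed as a module over itself, and its simplicity is hypothesis (3). The main obstacle throughout is conceptual rather than computational -- one must accept as a black box the deep convergence and extension results underlying \cite{Huang:2010} that guarantee the tensor product functor exists (and, together with the rigidity established later, is exact in each variable). Once these analytic inputs are granted, the proof is purely organisational: combine the finiteness from $C_2$-cofiniteness (item 1), the braided monoidal structure from the tensor product theory (item 2), and the simplicity hypothesis (item 3) into the single statement of the theorem.
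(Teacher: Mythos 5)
Your proposal takes essentially the same route as the paper, which offers no independent proof but simply collects Propositions 4.1, 4.3 and Theorems 3.24, 4.11 of \cite{Huang:2007mj} (resting on the logarithmic tensor product theory of \cite{Huang:2010}) --- exactly your assembly of finiteness from $C_2$-cofiniteness, the braided monoidal structure from the $P(z)$-tensor product theory, and simplicity of the unit from the standing hypothesis on $\VOA$. One small correction: your parenthetical appeal to ``rigidity established later'' is inaccurate in this paper's setting, where rigidity of $\Rep(\VOA)$ is only Conjecture \ref{conj:fact} and is not part of, nor needed for, the statement at hand.
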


Since $\Rep(\VOA)$ is finite abelian, it has a projective generator $G$. We abbreviate
the centraliser of the $\VOA$-action on $G$ as
\be
E := \End_\VOA(G).
\ee
 Then $\Rep(\VOA)$
  is adjointly equivalent to the category of finite-dimensional right $E$-modules\footnote{
See e.g.\ \cite[Sect.\,1.8]{EGNO-book} for the equivalence $\Hom_\VOA(G,-)$. The inverse equivalence is the standard Hom-tensor adjunction.}
\be\label{eq:RepV-modE-equiv}
\xymatrix@C=80pt@W=5pt@M=5pt{
\Rep(\VOA) \ar@/^/[r]^{\funF:=\Hom_{\VOA}(G,-)}
&\text{mod-}E \ar@/^/[l]^{- \otimes_E G}
\ .
}
\ee
For the functor $- \otimes_E G$, $G$ is considered as the left-left $(\VOA,E)$-bimodule. For $M \in \text{mod-}E$, we indeed have that $M \otimes_E G$ is a quasi-finite-dimensional generalised $\VOA$-module. To see this, note that $M \otimes_E G$ can be written as a cokernel
of the difference $l-r$ in
\be\label{eq:tensor_E-cokernel}
	M \otimes_\Cb E \otimes_\Cb G \xrightarrow{\; l-r\; } M \ot_\Cb G \xrightarrow{\; \pi_\ot\; } M \otimes_E G \ ,
\ee
where $l$, $r$ denote the left and right $E$-actions.
 Since the tensor products over $\Cb$ just give finite direct sums of $G$, and since $\Rep\VOA$ contains cokernels, $M \otimes_E G \in \Rep(\VOA)$.

\medskip

For the definition of pseudo-trace functions below we will need:

\begin{proposition}\label{prop:G-EndG-is-proj}
$G$ is projective as a
	left $E$-module
in the category 
of (possibly infinite dimensional) left $E$-modules.
Equivalently, if $G = \bigoplus_{h \in \Cb} G_h$ is the decomposition of $G$ into generalised $L_0$-eigenspaces, each $G_h$ is projective in the category of finite-dimensional left $E$-modules.
\end{proposition}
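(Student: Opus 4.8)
The plan is to deduce the projectivity of $G$ directly from the fact that the reconstruction functor $-\otimes_E G$ of \eqref{eq:RepV-modE-equiv} is an \emph{equivalence}, hence exact, by reading off that exactness as flatness of $G$ on the $E$-side. Since $\Rep(\VOA)$ is finite abelian (Theorem~\ref{thm:RepV-properties}), $E=\End_\VOA(G)$ is a finite-dimensional $k$-algebra, so that finitely generated and finite-dimensional right $E$-modules coincide. First I would note that an equivalence of abelian categories is exact, so $-\otimes_E G\colon \modE\to\Rep(\VOA)$ preserves short exact sequences; post-composing with the exact forgetful functor $\Rep(\VOA)\to\Vect$ (kernels and cokernels of $\VOA$-module maps are computed on underlying vector spaces) shows that $M\mapsto M\otimes_E G$ is an exact functor from finite-dimensional right $E$-modules to $k$-vector spaces.

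To upgrade exactness on the finite-dimensional subcategory to genuine flatness, I would show $\mathrm{Tor}^E_1(M,G)=0$ for every finitely generated right $E$-module $M$. Choosing a presentation $0\to K\to E^{\oplus n}\to M\to 0$ with $K$ finite-dimensional (possible since $M$ is finite-dimensional over the finite-dimensional algebra $E$), the exactness just established applied to this sequence of finite-dimensional modules forces $K\otimes_E G\to E^{\oplus n}\otimes_E G$ to be injective, i.e.\ $\mathrm{Tor}^E_1(M,G)=0$. Because $\mathrm{Tor}$ commutes with filtered colimits and every module is the filtered colimit of its finitely generated submodules, this vanishing extends to all right $E$-modules, so $G$ is flat as a left $E$-module.

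It then remains to pass from flat to projective and to match the two formulations. Since $E$ consists of $\VOA$-module endomorphisms it commutes with $L_0$ and hence preserves the generalised eigenspace decomposition $G=\bigoplus_h G_h$; this is therefore a decomposition of $G$ as a left $E$-module with each $G_h$ finite-dimensional. Each $G_h$ is a direct summand of the flat module $G$, hence flat, and a finitely generated flat module over the (Noetherian, indeed finite-dimensional) algebra $E$ is projective; thus each $G_h$ is projective in $\Emod$, equivalently in finite-dimensional left $E$-modules, which is the second assertion. Finally $G=\bigoplus_h G_h$ is an arbitrary direct sum of projectives and so is itself projective, giving the first assertion; the converse ``$G$ projective $\Rightarrow$ each $G_h$ projective'' is immediate as $G_h$ is a summand of $G$. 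I expect the only genuine subtlety to sit in the first two paragraphs: the crux is recognising that exactness of $-\otimes_E G$ is precisely flatness of the bimodule $G$ on the $E$-side, after which the finiteness of $E$ makes the implication flat $\Rightarrow$ projective automatic and the rest is formal.
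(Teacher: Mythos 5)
Your proof is correct, and it reorganises the paper's argument in a genuinely different way while resting on the same finiteness mechanism. The paper proceeds eigenspace-by-eigenspace: for each $h$ it forms the exact functor $D_h:\Rep(\VOA)\to\Vect^{\mathrm{fd}}$ picking out the generalised $L_0$-eigenspace, and then invokes the abstract Lemma \ref{lem:G-EndG-is-proj} (due to Meir), whose proof composes $D_h$ with the inverse of $\funF$ and uses an Eilenberg--Watts argument to recognise the composite right-exact functor as $(-)\otimes_E D_h(G)$, whence $G_h=D_h(G)$ is flat and, being finite-dimensional, projective. You instead exploit that the inverse equivalence in \eqref{eq:RepV-modE-equiv} is \emph{already given} in tensor form $-\otimes_E G$, so no Eilenberg--Watts step is needed: exactness of the equivalence, composed with the forgetful functor (where your parenthetical about cokernels is exactly what is needed to match the categorical $M\otimes_E G$ of \eqref{eq:tensor_E-cokernel} with the algebraic tensor product), yields $\mathrm{Tor}_1^E(M,G)=0$ for finite-dimensional $M$ via a presentation $0\to K\to E^{\oplus n}\to M\to 0$, i.e.\ flatness of all of $G$ at once; the $L_0$-grading then enters only at the very end, splitting $G$ into finite-dimensional $E$-module summands $G_h$ where ``finitely generated flat over a finite-dimensional algebra $\Rightarrow$ projective'' closes the argument, exactly as in the paper's final step. (Your filtered-colimit extension of the Tor-vanishing is harmless but not needed, since projectivity of each $G_h$ only requires Tor-vanishing against finitely generated modules.) What the two routes buy: yours is more elementary and self-contained, avoiding both the family of functors $D_h$ and the Eilenberg--Watts identification, and it establishes the slightly stronger intermediate statement that the bimodule $G$ is flat over $E$ wholesale; the paper's lemma is more general and portable, applying to an arbitrary exact $k$-linear functor $D$ into finite-dimensional vector spaces rather than only to the tautological one.
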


The proof relies on the following lemma, due to \cite{Meir-priv}.

\begin{lemma}\label{lem:G-EndG-is-proj}
Let $\mathcal{A}$ be a $k$-linear finite abelian category and let $D : \mathcal{A} \to \Vect^\mathrm{fd}(k)$ be an exact 
	$k$-linear
functor to finite-dimensional $k$-vector spaces. Let $G$ be a projective generator of $\mathcal{A}$. Then $D(G)$ is projective as an $\End_\mathcal{A}(G)$-module.
\end{lemma}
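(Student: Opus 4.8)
The plan is to reduce the statement to one about exact functors on a module category, and then combine an Eilenberg--Watts type representability with the classical fact that finitely presented flat modules are projective.

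First I would use that $G$ is a projective generator to pass through the standard equivalence $\funF = \Hom_\mathcal{A}(G,-) : \mathcal{A} \xrightarrow{\sim} \modE$ afforded by $G$ (see e.g.\ \cite[Sect.\,1.8]{EGNO-book}), under which $G$ corresponds to the regular right module $E_E$ and $\End_\mathcal{A}(G)=E$ is identified with $\End_{\modE}(E_E)$ via left multiplication $l_e\colon x\mapsto ex$. Composing $D$ with the inverse equivalence $\funG$ yields an exact $k$-linear functor $\widetilde D := D\circ\funG : \modE \to \Vect^\mathrm{fd}(k)$ with $\widetilde D(E_E)=D(G)$, and the $E$-action on $D(G)$ coming from functoriality of $D$ matches the left $E$-module structure on $\widetilde D(E_E)$ obtained by applying $\widetilde D$ to the endomorphisms $l_e$. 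Thus it suffices to show that $N := \widetilde D(E_E)$ is projective as a left $E$-module.

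Next I would set up the natural transformation $\Theta_M : M \otimes_E N \to \widetilde D(M)$, $m\otimes n \mapsto \widetilde D(r_m)(n)$, where $r_m\colon E_E\to M$ is the right module map $e\mapsto me$. The identity $r_{me}=r_m\circ l_e$ makes $\Theta_M$ well defined over $\otimes_E$, and it is plainly an isomorphism for $M=E_E$, hence for all finite direct sums of copies of $E_E$. Since $E$ is finite-dimensional, every $M\in\modE$ admits a finite free presentation $E^a\to E^b\to M\to 0$; as both $-\otimes_E N$ and $\widetilde D$ are right exact and $\Theta$ is an isomorphism on free modules, the five lemma forces $\Theta_M$ to be an isomorphism for all $M$. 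Therefore $\widetilde D \cong -\otimes_E N$. Finally, since $\widetilde D$ is exact it is in particular left exact, so $-\otimes_E N$ preserves monomorphisms, i.e.\ $N$ is flat as a left $E$-module; being finite-dimensional over $k$ it is finitely presented, and a finitely presented flat module over any ring is projective. Hence $N=D(G)$ is projective, as claimed.

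The step I expect to be the main obstacle is the Eilenberg--Watts identification $\widetilde D\cong -\otimes_E N$: one must run the argument purely with finite colimits (finite direct sums and cokernels of maps between frees), since $\modE$ contains no arbitrary coproducts, and one must carefully match the two possible $E$-module structures on $D(G)$ so that projectivity lands on the correct (left) side. Once representability is in place, extracting projectivity from exactness via flatness is routine.
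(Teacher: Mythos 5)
Your proposal is correct and follows essentially the same route as the paper: pass through the equivalence $\Hom_\mathcal{A}(G,-)$ to $\mathrm{mod}\text{-}E$, identify the composite exact functor with $(-)\otimes_E D(G)$ (the paper invokes this Eilenberg--Watts step from right-exactness without spelling out the presentation/five-lemma argument you give), and conclude flatness from exactness and projectivity from flat plus finite-dimensional. Your extra care in matching the two $E$-module structures on $D(G)$ corresponds to the paper's remark that ``by construction, the $E$-actions on $H(E)=D(G)$ agree.''
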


\begin{proof}
Write $E := \End_\mathcal{A}(G)$. Since $\mathcal{A}$ is finite abelian, $E$ is a finite-dimensional $k$-algebra. Denote by mod-$E$ the category of finite-dimensional right $E$-modules. 
	For the same reason as in \eqref{eq:RepV-modE-equiv},
the functor
 ${\mathcal{A}}(G,-) : \mathcal{A} \to \text{mod-}E$ is a $k$-linear equivalence.
The composition 
$H := D \circ {\mathcal{A}}(G,-)^{-1}$ is a $k$-linear exact functor. 

The bimodule structure ${}_EE_E$ turns $H(E)$ into a left $E$-module.
Since $H$ is in particular right-exact, one obtains a natural isomorphism $H \cong (-) \otimes_E H(E)$. 
Exactness of $H$ then implies that the left $E$-module $H(E)$ is flat. Since $H(E)$ is finite-dimensional, it is also projective as a left $E$-module.

By construction, the $E$-actions on $H(E)=D(G)$ agree. Hence $D(G)$ is a projective $E$-module.
\end{proof}

\begin{proof}[Proof of Proposition \ref{prop:G-EndG-is-proj}]
For $h \in \Cb$, let $D_h : \Rep(\VOA) \to \Vect(\Cb)$ be the functor which takes a $\VOA$-module $M$ to its generalised $L_0$-eigenspace of eigenvalue $h$. 
Since morphisms  
	commute with the $L_0$-action,
this is indeed a functor which is furthermore exact. Since modules in $\Rep(\VOA)$ are quasi-finite dimensional, the image of $D_h$ lies in finite-dimensional $\Cb$-vector spaces. Finally, by Theorem \ref{thm:RepV-properties}, $\Rep(\VOA)$ is finite. 
Applying Lemma \ref{lem:G-EndG-is-proj} to $\mathcal{A} = \Rep(\VOA)$ and $D = D_h$ shows that $D_h(G)$ is projective as an
	 $E$-module. 
Hence, so is the
 $E$-module 
$G = \bigoplus_{h \in \mathbb{C}} D_h(G)$. 
\end{proof}

\begin{remark}\label{rem:morita}
Proposition~\ref{prop:G-EndG-is-proj} allows one to state a version of Morita equivalence between $\VOA$ and its centraliser on $G$, the algebra  $E$. Indeed, $G$ is projective  for both the algebras and it is a 
projective generator in the category of possibly  infinite-dimensional  
	right
$E$-modules. To show the last claim,
we decompose  $G=\bigoplus_{U\in\Irr(\VOA)} n_U P_U$, as a left $\VOA$-module. There is a natural projection to $\bigoplus_{U\in\Irr(\VOA)} n_U U$ and it commutes with the action of $E$ divided by its Jacobson radical. Therefore, $G$ covers any irreducible $E$-module. Since by Proposition~\ref{prop:G-EndG-is-proj} $G$ is projective, it is a projective generator for $E$-modules.
\end{remark}

For each $M \in \Rep(\VOA)$, we define a central form $\varphi_M \in C(E)$ via
\be\label{eq:varphiM-via-tr}
	\varphi_M : E \longrightarrow \Cb
	\quad , \quad f \,\mapsto\, \mathrm{tr}_{\mathcal{F}(M)}(f) \ .
\ee
Here, $\mathcal{F}$ is the equivalence from \eqref{eq:RepV-modE-equiv} and $\mathrm{tr}_{\mathcal{F}(M)}$ is the vector-space trace over the finite-dimensional left $E$-module $\mathcal{F}(M)$.

By Proposition \ref{prop:G-EndG-is-proj}, we can define pseudo-trace functions for $G$ and any central form $\varphi \in C(E)$ as in \eqref{def:pseudo-trace}. For the forms $\varphi_M$ just defined, one obtains:

\begin{proposition}\label{prop:tr_m-vs-pseudo_G}
For $M \in \Rep(\VOA)$, the pseudo-trace function for $\varphi_M$ satisfies \be
	\xi^{\varphi_M}_G(v,\tau)
	~=~ 
	\mathrm{tr}_{M}\big(\,o(v) \,e^{2 \pi i \tau (L_0 - c/24)}\, \big) 
	\qquad , ~~ v \in \VOA~,~\tau \in \mathbb{H} \ ,
\ee
where $\mathrm{tr}_{M}$ is the vector space trace over $M$.
\end{proposition}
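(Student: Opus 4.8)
The plan is to reduce the statement, degree by degree in the $L_0$-grading, to a general compatibility between Hattori--Stallings traces and the tensor product $-\otimes_E(-)$. Write $\Theta := o(v)\, e^{2\pi i \tau(L_0 - c/24)}$. Since $E = \End_\VOA(G)$ consists of $\VOA$-intertwiners, its elements commute with the zero mode $o(v)$ and with $L_0$; together with $[L_0,o(v)]=0$ this shows that $\Theta$ preserves each generalised eigenspace $G_h$ and restricts to an endomorphism $\Theta|_{G_h}\in\End_E(G_h)$ (on $G_h$ the factor $e^{2\pi i\tau(L_0-c/24)}$ is a polynomial in the nilpotent part of $L_0$, hence still commutes with $E$). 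The defining formula \eqref{def:pseudo-trace} expands the left-hand side as $\xi^{\varphi_M}_G(v,\tau)=\sum_h t^{\varphi_M}_{G_h}(\Theta|_{G_h})$, and the right-hand side as $\mathrm{tr}_M(\Theta)=\sum_h\mathrm{tr}_{M_h}(\Theta|_{M_h})$, so it suffices to prove, for each $h$,
\be
	t^{\varphi_M}_{G_h}\big(\Theta|_{G_h}\big) ~=~ \mathrm{tr}_{M_h}\big(\Theta|_{M_h}\big) \ .
\ee

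The key input is the following identity. Let $N$ be a finite-dimensional right $E$-module and let $\varphi_N\in C(E)$ be $\varphi_N(a)=\mathrm{tr}_N(a)$. Then for every finitely-generated projective left $E$-module $P$ and every $\phi\in\End_E(P)$,
\be\label{eq:HS-tensor}
	t^{\varphi_N}_P(\phi) ~=~ \mathrm{tr}_{N\otimes_E P}\big(\id_N\otimes\phi\big) \ .
\ee
To prove \eqref{eq:HS-tensor} I would first treat the free case $P=E\otimes_k X$ with $X$ finite-dimensional. Writing a left $E$-linear $g\in\End_E(E\otimes_k X)$ as a matrix $(g_{ij})$ over $E$ in a basis of $X$, the diagram \eqref{eq:trace-map-def} evaluates to $t^{\varphi_N}_{E\otimes X}(g)=\sum_i\varphi_N(g_{ii})$; on the other hand $N\otimes_E(E\otimes_k X)\cong N\otimes_k X$, and a direct computation of the induced action gives $\mathrm{tr}_{N\otimes X}(\id_N\otimes g)=\sum_i\mathrm{tr}_N(g_{ii})=\sum_i\varphi_N(g_{ii})$, settling the free case. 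For general projective $P$ I would choose $\pi:E\otimes_k X\to P$ and a section $\iota$ with $\pi\circ\iota=\id_P$ as in Section \ref{sec:ptfun}, and set $g:=\iota\circ\phi\circ\pi$. Property 2 of $t^\varphi$ yields $t^{\varphi_N}_P(\phi)=t^{\varphi_N}_{E\otimes X}(g)$, while cyclicity of the vector-space trace together with $(\id_N\otimes\pi)\circ(\id_N\otimes\iota)=\id$ yields $\mathrm{tr}_{N\otimes_E P}(\id_N\otimes\phi)=\mathrm{tr}_{N\otimes X}(\id_N\otimes g)$; the free case then gives \eqref{eq:HS-tensor}.

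Finally I would specialise \eqref{eq:HS-tensor} to $N=\funF(M)$ and $P=G_h$, which is a finitely-generated projective left $E$-module by Proposition \ref{prop:G-EndG-is-proj}; by definition $\varphi_N=\varphi_M$. Under the counit isomorphism $\funF(M)\otimes_E G\xrightarrow{\sim}M$ of the equivalence \eqref{eq:RepV-modE-equiv}, the $\VOA$-action is carried entirely by the $G$-tensorand, so this isomorphism is graded with $M_h\cong\funF(M)\otimes_E G_h$, and both $o(v)$ and $L_0$ act as $\id_{\funF(M)}\otimes(-)$; hence $\Theta|_{M_h}$ corresponds to $\id_{\funF(M)}\otimes\Theta|_{G_h}$. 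Thus \eqref{eq:HS-tensor} specialises to exactly $t^{\varphi_M}_{G_h}(\Theta|_{G_h})=\mathrm{tr}_{M_h}(\Theta|_{M_h})$, and summing over $h$ completes the proof. I expect the genuine work to sit in \eqref{eq:HS-tensor}: the free-module computation must be carried out with care about the left/right $E$-module conventions, and one must verify that the $\VOA$-module structure transported across \eqref{eq:RepV-modE-equiv} really acts through the $G$-factor, so that $\Theta$ transports to $\id\otimes\Theta|_{G_h}$ and the Hattori--Stallings trace on $G_h$ matches the honest trace on $M_h$.
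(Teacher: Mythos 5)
Your proposal is correct, and it reaches the statement by a genuinely different organisation of the same underlying mechanism. The paper's proof works with a single global presentation of $G$: it takes $\pi : E \ot G \to G$, $\pi(f \ot g) = f(g)$, an arbitrary $E$-linear section $\iota$, forms $p = (\rho' \ot \id)\circ(\id \ot \iota)$ on $\funF(M) \ot G$, shows that $p$ descends to a section $\bar p$ of the projection $\pi_\ot : \funF(M)\ot G \to \funF(M)\ot_E G \cong M$, and then concludes in one chain of equalities using cyclicity of the ordinary vector-space trace (where it is essential that $\pi_\ot$ is a $\VOA$-module map, so that it commutes with $\mathcal{O}=o(v)e^{2\pi i \tau(L_0-c/24)}$) followed by a partial trace over $\funF(M)$, which reproduces exactly the Hattori--Stallings trace $t^{\varphi_M}_G$ for the chosen $\pi,\iota$. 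You instead isolate the general Morita-type compatibility $t^{\mathrm{tr}_N}_P(\phi)=\mathrm{tr}_{N\ot_E P}(\id_N\ot\phi)$ of the Hattori--Stallings trace with the functor $N\ot_E(-)$, prove it by reduction to the free case $P = E\ot_k X$ via property 2 and cyclicity (this computation is correct, including the matrix conventions for left $E$-linear endomorphisms and right $E$-modules $N$), and then reduce the proposition to graded bookkeeping: $\Theta|_{G_h}\in\End_E(G_h)$, $G_h$ finitely generated projective by Proposition \ref{prop:G-EndG-is-proj}, and $M_h\cong\funF(M)\ot_E G_h$ with $\Theta$ acting through the $G$-tensorand because the counit of \eqref{eq:RepV-modE-equiv} is evaluation of intertwiners. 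What each route buys: yours yields a clean, reusable lemma and makes explicit two points the paper leaves implicit, namely the degree-by-degree reading of \eqref{def:pseudo-trace} (the paper's "$X=G$" in \eqref{eq:trace-map-def} should really be read per graded piece, since the auxiliary space there is required to be finite-dimensional) and the verification that the grading and the operator $\Theta$ transport correctly across the equivalence; the paper's argument is shorter, basis-free, and treats all degrees simultaneously at the cost of the somewhat ad hoc $p$, $\bar p$ construction. One cosmetic remark in your favour: the paper's \eqref{eq:varphiM-via-tr} calls $\funF(M)$ a left $E$-module although the equivalence \eqref{eq:RepV-modE-equiv} lands in right $E$-modules; your right-module convention is the consistent one, and since the trace of the operator by which $f\in E$ acts is insensitive to the side of the action, this does not affect either proof.
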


\begin{proof}
The pseudo-trace function $\xi^\varphi_G$ is defined in terms of $E$-module maps $\pi$, $\iota$. We choose $\pi$ to be a map $E \ot G \to G$, and to be given by acting with $E$ on $G$, $\pi(f \ot g) = f(g)$. For $\iota$ we choose an arbitrary right-inverse $E$-module map, $\pi \circ \iota = \id_G$. 
We abbreviate $M' := \mathcal{F}(M)$ and write $\rho' : M' \ot E \to M'$ for the right action of $E$ on $M'$.
Let
\be
	p ~=~ \big[ M' \ot G \xrightarrow{\id \ot \iota} M' \ot E \ot G 
	\xrightarrow{ \rho' \ot \id } M' \ot G \, \big] \ .
\ee
Let $l,r,\pi_\ot$ be as in \eqref{eq:tensor_E-cokernel}.
Using that $\iota$ is an $E$-module map, and that  $\pi \circ \iota = \id$, one verifies that
\be
	p \circ (l-r) = 0
	~~,\quad
	\pi_\otimes \circ p = \pi_\otimes \ .
\ee
The first property implies that there is a map
 $\bar p : M' \ot_E G \to M' \ot G$
  such that $p = \bar p \circ \pi_\ot$. From the second condition we learn that $\pi_\otimes \circ \bar p \circ \pi_\ot = \pi_\otimes$, and thus, since $\pi_\ot$ is surjective, that
\be\label{eq:pi-barp=id}
	\pi_\otimes \circ \bar p = \id_{M' \ot_E G} \ .
\ee
Abbreviate $\mathcal{O} = o(v) \,e^{2 \pi i \tau (L_0 - c/24)}$. Since 
by \eqref{eq:RepV-modE-equiv}, $M \cong M' \ot_E G$, we have
\begin{align}
\mathrm{tr}_{M}\big(\,\mathcal{O}\, \big)
&=
\mathrm{tr}_{M' \ot_E G}\big(\,\mathcal{O}\, \big)
\overset{\eqref{eq:pi-barp=id}}=
\mathrm{tr}_{M' \ot_E G}\big(\,\pi_\otimes \,\bar p \, \mathcal{O}\, \big)
\overset{(*)}=
\mathrm{tr}_{M' \ot G}\big(\,\bar p\, \pi_\otimes\, \mathcal{O}\, \big)
\nonumber\\
&=
\mathrm{tr}_{M' \ot G}\big(\,p \, \mathcal{O}\, \big)
\overset{(**)}=
t^{\varphi_M}_G\big(\,\mathcal{O}\, \big) \ .
\end{align}
In step (*),  we used the cyclicity property of the trace and the fact that $\pi_\otimes$ is a $\VOA$-module map, while
in step (**) the partial trace over $M'$ was taken.
\end{proof}

In particular, for each irreducible $\VOA$-module $U$, we have
\be\label{varphiU}
	\xi^{\varphi_U}_G ~=~ \xi^{\id^*}_U \ , 
\ee
where $\id^* : \End_\VOA(U) \to \Cb$ is the form which takes $\id_U$ to $1$ (since $U$ is simple, $\End_\VOA(U) = \Cb \id_U$). 

\begin{remark}
Although we do not need it in this paper, let us point out that Proposition~\ref{prop:tr_m-vs-pseudo_G} generalises to all pseudo-trace functions as follows. Let $M \in \Rep(\VOA)$ and let $P \subset \End_\VOA(M)$ be a subalgebra such that $M$ is projective as a $P$-module ($P$ is called `projective commutant' in \cite{Arike:2011ab}). Then for each $\psi \in C(P)$ there exists a $\varphi \in C(E)$ such that
\be
	\xi^{\varphi}_G(v,\tau)
	~=~ 
	t^\psi_{M}\big(\,o(v) \,e^{2 \pi i \tau (L_0 - c/24)}\, \big) 
	\qquad , ~~ v \in \VOA~,~\tau \in \mathbb{H} \ .
\ee
To see this, one first shows that $M$ is projective as a $P$-module if and 
only if $\mathcal{F}(M)$ is projective as a $P$-module.\footnote{
Let $M' = \mathcal{F}(M)$,
recall the definition in~\eqref{eq:RepV-modE-equiv}.
Suppose $M'$ is projective as a left $P$-module. Pick a $P$-module map $\iota : M' \to P \ot M'$ right-inverse to the $P$-action on $M'$. Combine $\iota$ with $\bar p$ and $\pi_\ot$ from the proof of Proposition \ref{prop:tr_m-vs-pseudo_G} to construct $P$-module maps 
$\tilde\pi : P \ot M' \ot G \to M' \ot_E G$ and $\tilde\iota : M' \ot_E G \to P \ot M' \ot G$ with $\tilde\pi \circ \tilde\iota = \id$.
This establishes
$M' \ot_E G\cong M$  as a direct summand of the free $P$-module $P \ot M' \ot G$. Conversely, if $M \cong M' \ot_E G$ is projective as a left $P$-module, use that by  Remark \ref{rem:morita}, $G$ is a generator for left $E$-modules. Then $G^{\oplus m} \cong E \oplus X$ as $E$-modules for some $m$ and $X$. Thus
  $M^{\oplus m} \cong M' \ot_E G^{\oplus m} \,\cong\, M' \,\oplus\, M' \ot_E X$ 
 as $P$-modules, showing that
  $M'$ is a direct summand of a projective $P$-module.
 }
Then one defines the pseudo-trace in terms of maps $\pi' : P \ot \mathcal{F}(M) \rightarrow \mathcal{F}(M)$ and $\iota' : \mathcal{F}(M) \to P \ot \mathcal{F}(M)$, resulting in an explicit formula for $\varphi$, namely
$\varphi(f) = \mathrm{tr}_{\mathcal{F}(M)}\big( (\psi \ot \id) 
	\circ \iota' \circ f \big)$,
where $f \in E$. 

The above argument shows that all pseudo-trace functions can be obtained from those for a given choice of a projective generator. 
\end{remark}

It is not known in general whether $\Rep(\VOA)$ is rigid (it is in the semisimple case \cite{Huang2005}) or pivotal, so we add this as a conjecture (cf.\ \cite[Conj.\,4.2\,\&\,Cor.\,4.3]{Huang:2009xq}). 
Furthermore, again from analogy to the semisimple case \cite{Huang2005}, and from the hoped-for relation with 3d\,TFT (see Remark \ref{rem:bizarre-conjecture} below), we expect $\Rep(\VOA)$ to be factorisable. Altogether:

\begin{conjecture}\label{conj:fact}
$\Rep(\VOA)$ is rigid, pivotal and factorisable.
\end{conjecture}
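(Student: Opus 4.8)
The plan is to establish the three properties in the order rigidity, pivotality, factorisability, since each step relies on the previous one. Because the statement is formulated as a conjecture, I do not expect a complete proof to be attainable with current techniques: the difficulty is concentrated almost entirely in rigidity, which even in the rational (semisimple) case required Huang's deep analytic input on modular invariance of $q$-traces. I will indicate where each piece could in principle be obtained and isolate the genuine obstruction.

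For rigidity, the natural candidate dual of $M \in \Rep(\VOA)$ is the contragredient module $M'$. Using the logarithmic intertwining operators provided by the tensor theory of \cite{Huang:2010}, one constructs candidate morphisms $\ev_M : M' \ot M \to \VOA$ and $\coev_M : \VOA \to M \ot M'$ from the canonical invariant pairing between $M$ and $M'$ together with the vacuum vector. The real content is then the two triangle (zig-zag) identities. Verifying these comes down to controlling the associativity isomorphisms and, crucially, the convergence and single-valued analytic continuation of iterated products of intertwining operators. This is the main obstacle. In the rational case one avoids a direct verification by exploiting semisimplicity together with nonvanishing of the relevant $S$-matrix entries; in the present logarithmic setting no such shortcut exists, and proving the triangle identities directly remains an open analytic problem.

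Granting rigidity, pivotality should follow by more formal arguments. The $L_0$-grading equips every module with a canonical pairing with its contragredient, which I would use to build a natural isomorphism $\delta : \Id \Rightarrow (-)^{**}$, checking that it is monoidal, i.e.\ that $\delta_{M \ot N}$ is compatible with $\delta_M$ and $\delta_N$ under the canonical duality of the tensor product. The hypotheses that $\VOA$ is simple as a module and isomorphic to $\VOA'$ guarantee that the tensor unit is simple and self-dual, which fixes the normalisation of $\delta$ on $\VOA$.

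Finally, for factorisability I would invoke the equivalence of conditions in Theorem \ref{thm:factequiv} and aim at whichever is cleanest to verify. The most promising is condition~1, that every transparent object is a direct sum of tensor units. Under the identification of the braiding with the monodromy of intertwining operators, transparency of an object forces the associated operators to be single-valued; combined with simplicity of $\VOA$ and the expected absence of nontrivial ``holomorphic'' self-extensions, this should confine transparent modules to sums of $\VOA$. Alternatively one could target condition~3 by producing a copairing inverse to the Hopf pairing $\omega$ on the coend $L$, built from an integral of $L$, with self-contragredience of $\VOA$ being the structural input that makes $\omega$ non-degenerate. Either route, however, presupposes rigidity and therefore inherits the analytic difficulties above --- which is precisely why the full statement is recorded as a conjecture rather than proved as a theorem.
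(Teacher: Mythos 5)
This statement is recorded in the paper as Conjecture~\ref{conj:fact}, and the paper contains no proof of it: the only supporting evidence offered is the semisimple precedent of \cite{Huang2005}, Huang's conjecture on rigidity \cite[Conj.\,4.2\,\&\,Cor.\,4.3]{Huang:2009xq}, and the hoped-for relation with 3d\,TFT spelled out in Remark~\ref{rem:bizarre-conjecture}. Your proposal therefore takes essentially the same stance as the paper itself. You correctly decline to prove the statement, and your diagnosis agrees with the paper's: rigidity is the genuine bottleneck, the candidate duality data being the contragredient module with (co)evaluations built from the invariant pairing, and the zig-zag identities requiring analytic control of iterated intertwining operators that is unavailable outside the rational case --- where, as you note, Huang's argument instead exploits semisimplicity and nonvanishing $S$-matrix entries. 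Your route to factorisability via condition~1 of Theorem~\ref{thm:factequiv} (triviality of transparent objects, detected through monodromy of intertwining operators) is sensible and is in fact how the non-degeneracy was later established in the literature conditionally on rigidity. One small caution: pivotality is not purely formal either --- it is part of the conjecture precisely because even the monoidality of the candidate isomorphism $M \cong M''$ was open at the time --- so ``should follow by more formal arguments'' slightly understates the gap. Finally, note that the only verification the paper offers is conditional and example-specific: for symplectic fermions, Conjecture~\ref{conj:fact} is deduced from the conjectured ribbon equivalence $(\widehat{-})_\mathrm{ev} : \SF(\h) \to \Rep(\VOA_\mathrm{ev})$ of Conjecture~\ref{conj:SF-RepV}, using that $\SF(\h)$ is known to be factorisable by \cite[Prop.\,5.3]{Davydov:2012xg}, a route orthogonal to the intrinsic VOA-theoretic strategy you sketch.
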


Already from Theorem \ref{thm:RepV-properties} we know that $\Rep(\VOA)$ has a projective generator $G$. 
By Proposition \ref{prop:G-EndG-is-proj} we can define pseudo-trace functions for $G$.
The next conjecture we will need is:

\begin{conjecture}\label{conj:C(End)-C1(V)-iso}
Let $G$ be a projective generator of $\Rep(\VOA)$.
Then the map $C(E) \to C_1(\VOA)$ from Proposition \ref{prop:pt-torus1} is an isomorphism of $\Cb$-vector spaces.
\end{conjecture}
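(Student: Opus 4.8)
The plan is to establish the two halves of the isomorphism separately: surjectivity and injectivity of $C(E) \to C_1(\VOA)$, $\varphi \mapsto \xi^\varphi_G$. Throughout write $B := \End_E(G)$ for the bicommutant of the $\VOA$-action on $G$, and note that the zero modes $o(v)$ and the operators $e^{2\pi i \tau (L_0 - c/24)}$ commute with $E = \End_\VOA(G)$, so that for each central form $\varphi$ the function $\xi^\varphi_G(v,\tau) = t^\varphi_G\big(o(v)\,e^{2\pi i \tau(L_0 - c/24)}\big)$ is the evaluation of the Hattori--Stallings trace $t^\varphi_G$ on a distinguished family of elements of $B$.

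For \emph{injectivity}, I would first use that $e^{2\pi i \tau(L_0 - c/24)}$ acts on the generalised eigenspace $G_h$ as $e^{2\pi i \tau(h - c/24)}\,e^{2\pi i \tau N_h}$ with $N_h$ nilpotent, so that if $\xi^\varphi_G = 0$ then, separating the linearly independent exponentials $e^{2\pi i \tau h}$ and expanding in $\tau$, one gets $t^\varphi_{G_h}\big(o(v)|_{G_h}\, N_h^{\,j}\big) = 0$ for all weights $h$, all $v \in \VOA$ and all $j \ge 0$. Each $G_h$ is a finite-dimensional projective $E$-module by Proposition \ref{prop:G-EndG-is-proj}, and the operators $o(v)|_{G_h}N_h^{\,j}$ lie in $\End_E(G_h)$. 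Choosing a finite set $S$ of weights for which $G_S := \bigoplus_{h \in S} G_h$ is already a projective generator (possible because $G$ is a generator of $E$-mod by Remark \ref{rem:morita} and there are only finitely many simples), the Morita equivalence between $E$ and $\End_E(G_S)$ identifies $C(E)$ with $C(\End_E(G_S))$ through $\varphi \mapsto t^\varphi_{G_S}$, and central forms on $\End_E(G_S)$ are detected on the block-diagonal pieces $\End_E(G_h)$ since the off-diagonal homomorphisms lie in the commutator subspace. Hence $\varphi = 0$ will follow provided that, for each $h \in S$, the family $\{\,o(v)|_{G_h}\,N_h^{\,j}\,\}$ spans the cocommutator space $\End_E(G_h)/[\End_E(G_h),\End_E(G_h)]$. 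This is a double-centralizer/density statement for the $\VOA$-action on each graded piece, and I expect it to be the main obstacle on the injectivity side.

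For \emph{surjectivity}, I would invoke Miyamoto's theorem \cite[Thm.\,5.5]{Miyamoto:2002ar}, which asserts that (under the running assumptions) $C_1(\VOA)$ is spanned by pseudo-trace functions, together with the reduction recorded in the Remark following Proposition \ref{prop:tr_m-vs-pseudo_G}: every Arike--Nagatomo pseudo-trace function of a module with a projective commutant already occurs as $\xi^\varphi_G$ for a suitable $\varphi \in C(E)$. Surjectivity then follows once one shows that Miyamoto's pseudo-trace functions, defined through the higher Zhu algebras, and the Arike--Nagatomo traces $\xi^\varphi_G$ used here span the same subspace of $C_1(\VOA)$. Proving this comparison -- equivalently, showing directly that the $\xi^\varphi_G$ span $C_1(\VOA)$, which is explicitly left open in \cite{Arike:2011ab} -- is the main obstacle on the surjectivity side.

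As a consistency check and an alternative route to closing the argument, one can count dimensions. Under Conjecture \ref{conj:fact} the category $\Rep(\VOA)$ is factorisable, hence unimodular, so $E$ is a symmetric Frobenius algebra and Lemma \ref{lem:ZA-CA-iso} together with the isomorphisms $Z(\End(G)) \cong \End(\Id_{\Rep(\VOA)}) \cong \CF(\Rep(\VOA))$ from Remark \ref{rem:ver} gives $\dim_\Cb C(E) = \dim_\Cb \CF(\Rep(\VOA))$. The modular Verlinde picture predicts the same value for $\dim_\Cb C_1(\VOA)$, so either an independent upper bound $\dim_\Cb C_1(\VOA) \le \dim_\Cb C(E)$ combined with injectivity, or the surjectivity input above combined with injectivity, would upgrade the injective map to the claimed isomorphism. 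In all of these routes the sharp VOA-analytic inputs -- the density of the modes on each $G_h$ and the spanning/comparison of pseudo-trace functions -- are exactly what is presently missing, which is why the statement remains a conjecture.
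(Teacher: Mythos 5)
You have not proved Conjecture \ref{conj:C(End)-C1(V)-iso}, and you should be aware that the paper does not prove it either: it is stated as a conjecture, used as a hypothesis in Theorem \ref{thm:main}, and supported only by evidence --- the $N=1$ symplectic fermion case \cite{Flohr:2005cm,Arike:2011ab}, the dimension count of vacuum pseudo-traces for the triplet VOA \cite{AM09}, and the expected relation to 3d TFT spelled out in Remark \ref{rem:bizarre-conjecture}. Your text is honest about this, since both halves of your plan terminate in openly unproved assertions: on the injectivity side, the density of the operators $o(v)|_{G_h}N_h^{\,j}$ in $\End_E(G_h)$ modulo commutators; on the surjectivity side, the comparison between Miyamoto's pseudo-trace functions (built from $n$-th Zhu algebras) and the Arike--Nagatomo traces $\xi^\varphi_G$, equivalently the spanning property explicitly left open in \cite{Arike:2011ab}. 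Those reductions themselves are sound: the separation of the coefficients of $\tau^j q^h$ (with the usual lower-truncation argument, since the sum over $h$ is infinite), the commutator trick killing the off-diagonal blocks of $\End_E(G_S)$, and the finite-set Morita reduction using Remark \ref{rem:morita} are all correct, and your injectivity scheme parallels exactly how the paper handles the one case it can treat, namely the explicit coefficient extraction in Lemma \ref{lem:xi-inj-minus1-modes} for symplectic fermions. So the appropriate verdict is: correct identification of why the statement is open, not a proof of it.

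Two inaccuracies in your sketch deserve correction. First, Miyamoto's spanning theorem \cite[Thm.\,5.5]{Miyamoto:2002ar}, as quoted in Section \ref{sec:ptfun}, carries the additional hypothesis that every simple $\VOA$-module is infinite-dimensional; this is not among the running assumptions 1--4 of Section \ref{sec:voa}, so even the ``$C_1(\VOA)$ is spanned by pseudo-traces'' input to your surjectivity argument is conditional as stated. Second, in your dimension-count route the step ``factorisable, hence unimodular, so $E$ is a symmetric Frobenius algebra'' is not justified by anything in the paper: for a general finite-dimensional algebra one can have $\dim Z(E) \neq \dim C(E)$, and the paper treats the symmetric Frobenius property of $E = \End_\VOA(G)$ --- with the specific form $\delta = S_\VOA(\varphi_\VOA)$ --- as part 1 of Conjecture \ref{conj:S-agrees}, i.e.\ as a further conjecture rather than a consequence of Conjecture \ref{conj:fact}. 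Relatedly, the closing appeal to ``the modular Verlinde picture predicts the same value for $\dim_\Cb C_1(\VOA)$'' is circular: the equality $\dim_\Cb C_1(\VOA) = \dim_\Cb \CF(\Rep(\VOA))$ is precisely condition 2 of Remark \ref{rem:bizarre-conjecture}, a restatement of the conjecture itself, so it cannot close the argument.
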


In the symplectic fermion example treated in Section~\ref{sec:sf}  the conjecture is known to be true for $N=1$ \cite{Flohr:2005cm,Arike:2011ab}.
For the triplet $W_{1,p}$ VOA, it is shown in~\cite{AM09} that the space spanned by all the vacuum pseudo-traces (\textit{i.e.}, for $v=\one$) is $(3p-1)$-dimensional
	which
agrees with the dimension of the centre of the centraliser $E$ (for any choice of the projective generator $G$) and 
	that of
the space $C(E)$.

The motivation
for Conjecture~\ref{conj:C(End)-C1(V)-iso}
comes from the relation to 3d\,TFT, as explained in the next remark.

\begin{remark}\label{rem:bizarre-conjecture}
In the semisimple case, there is a beautiful relation between chiral 2d conformal field theory (CFT) and 3d topological field theory, as first studied in \cite{Witten:1988hf,Moore:1989vd}. One mathematical precise formulation of this relation is that for a VOA which satisfies the properties in the beginning of this section, and which is in addition rational, the category $\Rep(\VOA)$ is a modular tensor category \cite{Huang2005}. Each modular tensor category in turn defines a 3d TFT \cite{retu,tur}. For a given surfaces, possibly with marked points, the spaces of conformal blocks of the VOA are expected to be isomorphic to the state space of the corresponding 3d TFT. This is known to be true in genus 0 and 1.

In the non-semisimple case much less is known. A variant of a 3d TFT which is constructed from a factorisable finite ribbon category $\Cc$ is given in \cite{Lyubashenko:1994tm,Kerler:2001}. The application of this theory to
	non-semisimple (aka.\ logarithmic)
 conformal field theory has been developed starting with \cite{Fuchs:2010mw}, see \cite{Fuchs:2013lda,Fuchs:2016wjr,Fuchs:2016whb} for recent results and 
further references.
In the 3d TFT for $\Cc$, the state space of the torus is $\Cc(\one,L)$. If this 3d TFT is indeed related to the chiral CFT for $\VOA$, the following must hold: 
\begin{enumerate}
\item $\Rep(\VOA)$ must be rigid, pivotal (actually: ribbon) and factorisable,
\item $\Hom_\VOA(\one,L)$ must be isomorphic to $C_1(\VOA)$,
\item for an appropriate choice of the isomorphism in 2., the projective $SL(2,\Zb)$-action on $\CF(\Rep(\VOA)) = \Hom_\VOA(\one,L)$ agrees (projectively) with the action on $C_1(\VOA)$.
\end{enumerate}
Condition 1 is covered by Conjecture \ref{conj:fact}. 
Condition 2 follows from Conjecture \ref{conj:C(End)-C1(V)-iso} and Lemmas \ref{lem:EndId-CE}, \ref{lem:Omalg}, together with Lemma \ref{lem:ZA-CA-iso} and Conjecture \ref{conj:S-agrees} below. 
Of the last condition we just need compatibility of the $S$-transformation, see again Conjecture \ref{conj:S-agrees}.
\end{remark}

Let us assume Conjecture \ref{conj:C(End)-C1(V)-iso} holds.
Let $S = \big( \begin{smallmatrix} 0 & -1 \\ 1 & 0 \end{smallmatrix} \big) \in SL(2,\Zb)$ 
be the generator corresponding to the modular $S$-transformation $\tau \mapsto -1/\tau$. By Theorem \ref{thm:C1-mod-inv}, $\xi \mapsto \xi|_S$ is an automorphism of $C_1(\VOA)$. Via the isomorphism in Conjecture \ref{conj:C(End)-C1(V)-iso}, there exists a unique linear automorphism
\be\label{eq:SVOA-on-CE}
	S_{\VOA} ~:~ C(E) \longrightarrow C(E) \ ,
\ee
such that for all $\varphi \in C(E)$ we have $\xi^\varphi_G|_S = \xi^{S_\VOA(\varphi)}_G$.

Each central form $\delta \in C(E)$ whose associated pairing on $E$ is non-degenerate provides an isomorphism $\hat\delta : \End(\Id_{\Rep(\VOA)}) \to C(E)$ via Lemma \ref{lem:ZA-CA-iso}. 
This allows one to compare $S_\VOA$ and $\varphi_U$ 
defined in~\eqref{eq:varphiM-via-tr}, or equivalently in~\eqref{varphiU},
to the categorical modular transformation $S_{\Rep(\VOA)}$  from Section \ref{sec:GrVer} and $\tildechi_U$ defined in~\eqref{eq:tildechi-def}.
Recall from Section \ref{sec:non-deg} that the integral $\Lambda_L$ is determined only up to a sign. Thus also the maps $\Rad$, $S_\Cc$ and the elements $\tildechi_U$ are determined only up to an overall sign.
The idea is now to find a choice of $\delta$ 
and of the sign of $\Lambda_L$
such that $\hat\delta(\tildechi_U) = \varphi_U$ for all $U \in \Irr(\Rep(\VOA))$ and such that
\be\label{eq:S-compatible}
	\xymatrix{
	\End(\Id_{\Rep(\VOA)}) \ar[rr]^{S_{\Rep(\VOA)}} \ar[d]^{\hat\delta} &&
	\End(\Id_{\Rep(\VOA)}) \ar[d]^{\hat\delta}
	\\
	C(E) \ar[rr]^{S_{\VOA}} &&
	C(E)}
\ee
commutes. Recall from point 2 of Remark \ref{rem:ver}
 that  $S_\Cc(\tildechi_\one) = \id$ and observe that by definition, $\hat\delta(\id) = \delta$. Thus, if the above idea is to work, we must have that $S_\VOA(\varphi_\VOA) = \delta$.
This leads us to the main conjecture:

\begin{conjecture}\label{conj:S-agrees}
Let $G \in \Rep(\VOA)$ be a projective generator. Then
\begin{enumerate}
\item
$\delta := S_\VOA(\varphi_\VOA)$ induces a non-degenerate pairing on 
$E = \End_\VOA(G)$, 
i.e.\ the latter becomes a symmetric Frobenius algebra.
\item
There is a choice for the sign of $\Lambda_L$ such that the isomorphism $\hat\delta : \End(\Id_{\Rep(\VOA)}) \to C(E)$ 
makes the diagram \eqref{eq:S-compatible} commute and
maps $\tildechi_U$ to $\varphi_U$ for each $U \in \Irr(\Rep(\VOA))$.
\end{enumerate}
\end{conjecture}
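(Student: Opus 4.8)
The statement is a conjecture, and a proof in full generality is out of reach at present: it presupposes Conjectures~\ref{conj:fact} and~\ref{conj:C(End)-C1(V)-iso} (without which neither $S_\Cc$ nor $S_\VOA$ is even defined), and its essential content is a non-semisimple analogue of the agreement of the categorical and modular $SL(2,\Zb)$-actions established in the rational case in~\cite{Huang2005}. The plan is therefore to assume these two conjectures throughout and to reduce both assertions of Conjecture~\ref{conj:S-agrees} to the single statement that one canonical isomorphism intertwines the two $S$-transformations.

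First I would assemble the comparison map. Since $\Cc=\Rep(\VOA)$ is factorisable it is unimodular, so $E=\End_\VOA(G)$ carries a canonical symmetric Frobenius structure, and $\dim\CF(\Cc)=\dim\CE(\Cc)=\dim Z(E)=\dim C(E)$. Following Shimizu~\cite{Shimizu:2015} one expects a canonical linear isomorphism $\Phi_0:\CF(\Cc)\to C(E)$, built from the universal property of the coend $L$ and the projective generator $G$, with $\Phi_0(\chi_V)=\varphi_V$ for all $V$ (with $\varphi_V$ as in~\eqref{eq:varphiM-via-tr}). Composing $\Phi_0$ with the pseudo-trace isomorphism $C(E)\cong C_1(\VOA)$ of Conjecture~\ref{conj:C(End)-C1(V)-iso} gives $\Phi:\CF(\Cc)\to C_1(\VOA)$, which by Proposition~\ref{prop:tr_m-vs-pseudo_G} and~\eqref{varphiU} sends $\chi_U$ to the VOA character $\hat\chi_U$. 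I would then record two purely categorical facts (see Remark~\ref{rem:ver}). First, $\chi_\one=\eta_L$ and the Hopf-pairing axiom $\omega\circ(\eta_L\ot\id)=\eps_L$ give $\Omega(\chi_\one)=\eps_L$, whence $S_{\CF}(\chi_\one)=\Rad(\eps_L)=\Lambda_L$: the $S$-image of the vacuum internal character is the integral. Second, $\Phi_0$ sends $\Lambda_L$ to the distinguished (non-degenerate) symmetric Frobenius form on $E$, and one has $\hat\delta=\Phi_0\circ\Rad\circ\psi$, because $\Rad\circ\psi$ is the categorical ``multiplication by the integral'' map $Z(E)\cong\End(\Id_\Cc)\to\CF(\Cc)$ and $\Phi_0$ carries it to the Frobenius isomorphism of Lemma~\ref{lem:ZA-CA-iso}.

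Granting these, both parts follow formally once the intertwining property (the crux below) is in hand. For Part~1, $\delta=S_\VOA(\varphi_\VOA)=S_\VOA(\Phi_0(\chi_\one))=\Phi_0(S_{\CF}(\chi_\one))=\Phi_0(\Lambda_L)$, which is exactly the non-degenerate Frobenius form, so $E$ becomes symmetric Frobenius. For Part~2, substituting $\hat\delta=\Phi_0\Rad\psi$ and $S_\Cc=(\Rad\psi)^{-1}\circ S_{\CF}\circ(\Rad\psi)$ shows that commutativity of~\eqref{eq:S-compatible} is equivalent to $\Phi_0\circ S_{\CF}=S_\VOA\circ\Phi_0$, while $\hat\delta(\tildechi_U)=\Phi_0(\Rad(\psi(\tildechi_U)))=\Phi_0(\chi_U)=\varphi_U$. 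The residual sign ambiguity of $\Lambda_L$ is fixed by requiring $\hat\delta(\id_{\Id_\Cc})=\delta$, i.e.\ the correct orientation in $\delta=S_\VOA(\varphi_\VOA)$.

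Thus everything collapses to the single identity $\Phi_0\circ S_{\CF}=S_\VOA\circ\Phi_0$, equivalently that $\Phi$ intertwines the categorical $S_{\CF}=\Rad\circ\Omega$ on $\CF(\Cc)$ with the modular transformation $\xi\mapsto\xi|_S$ on $C_1(\VOA)$. This is the main obstacle, and it is genuinely deep: it is the assertion that the topological $SL(2,\Zb)$-action of the Lyubashenko--Kerler 3d\,TFT for $\Cc$~\cite{Lyubashenko:1994tm,Kerler:2001} coincides with the analytic action on torus one-point functions, the non-semisimple counterpart of~\cite{Huang2005} (see Remark~\ref{rem:bizarre-conjecture}). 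I expect the only realistic routes to be either an equivalence of the associated genus-one modular functors, or -- as carried out for $N$ pairs of symplectic fermions in Section~\ref{sec:sf} -- an explicit computation of both $S$-transformations followed by a direct comparison, which is how I would test the conjecture in examples rather than prove it in general.
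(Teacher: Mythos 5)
You are right that this statement is a conjecture the paper never proves: the paper only motivates it (deriving the necessity of $\delta = S_\VOA(\varphi_\VOA)$ from $S_\Cc(\tildechi_\one)=\id$ and $\hat\delta(\id)=\delta$, exactly as you do), explains via Remark~\ref{rem:bizarre-conjecture} that its content is the non-semisimple analogue of the agreement of categorical and modular $SL(2,\Zb)$-actions of \cite{Huang2005}, and then checks it in the rational case (Remark~\ref{rem:ssi-steps14}) and for symplectic fermions (Section~\ref{sec:sf}, via the explicit computation in \cite{FGR2}). Your reduction of both parts to the single intertwining identity $\Phi_0\circ S_{\CF}=S_\VOA\circ\Phi_0$, and your conclusion that the only realistic route is explicit verification in examples, is essentially the paper's own stance.
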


Assuming the above conjectures, let us give the procedure to compute the structure constants of the Grothendieck ring $\Gr(\Rep(\VOA))$.

\begin{enumerate}
\item Compute the irreducible $\VOA$-modules $U \in \Irr(\Rep(\VOA))$ and their projective covers $P_U$. Fix a projective generator $G$ of $\Rep(\VOA)$.
\item Compute $\varphi_U \in C(E)$ for all $U \in \Irr(\Rep(\VOA))$
	via~\eqref{eq:varphiM-via-tr}. 
Compute the modular transformation $S_\VOA$
on $\varphi_U$. Let $\delta = S_\VOA(\varphi_\VOA)$.

\item Compute $\End(\Id_{\Rep(\VOA)}) \cong Z(\End_\VOA(G))$ and transport the composition in $\End(\Id_{\Rep(\VOA)})$ to $C(\End_\VOA(G))$ via $\hat\delta$: 
for all $\alpha,\beta \in C(\End_\VOA(G))$ let 
\be
	\alpha \muldiag \beta := \hat\delta\big( \, \hat\delta^{-1}(\alpha) \circ
\hat\delta^{-1}(\beta) \, \big) \ .
\ee
\item For $A,B \in \Irr(\Rep(\VOA))$ compute the unique constants $N_{AB}^{~C}$ in
\be\label{eq:compute-NABC}
	S_\VOA^{-1}\big( \, S_\VOA(\varphi_A) \muldiag S_\VOA(\varphi_B) \, \big)
	~= \hspace{-1em}
	\sum_{C \in \Irr(\Rep(\VOA))} \hspace{-1em} N_{AB}^{~C} ~ \varphi_C \ .
\ee
\end{enumerate}

\begin{theorem}\label{thm:main}
Let $\VOA$ be a VOA as stated in the beginning of this section and assume Conjectures \ref{conj:fact}, \ref{conj:C(End)-C1(V)-iso}, \ref{conj:S-agrees}. 
Then steps 1--4 compute the structure constants of $\Gr(\Rep(\VOA))$.
\end{theorem}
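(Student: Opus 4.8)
The plan is to show that steps 1--4 reproduce the categorical Verlinde formula \eqref{eq:ver} of Theorem \ref{thm:ver} after transporting everything along the isomorphism $\hat\delta : \End(\Id_{\Rep(\VOA)}) \to C(E)$ furnished by Conjecture \ref{conj:S-agrees}. The strategy is thus to reduce the modular statement to the already-proven categorical statement, using the conjectures only to identify the two sides of the dictionary. First I would invoke Conjecture \ref{conj:fact} so that $\Cc := \Rep(\VOA)$ is a factorisable pivotal finite tensor category, which makes Theorem \ref{thm:ver} applicable: for $A,B \in \Irr(\Cc)$ one has
\be\label{eq:proofplan-cat}
	S_\Cc^{-1}\big( \, S_\Cc(\tildechi_A) \circ S_\Cc(\tildechi_B)\, \big)
	~=~
	\sum_{C \in \Irr(\Cc)} N_{A,B}^{~C} \, \tildechi_C \ ,
\ee
where the $N_{A,B}^{~C}$ are exactly the structure constants of $\Gr(\Cc)$ that we wish to compute.

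Next I would import the dictionary. By Conjecture \ref{conj:S-agrees}(1), $\delta = S_\VOA(\varphi_\VOA)$ induces a non-degenerate pairing on $E$, so by Lemma \ref{lem:ZA-CA-iso} the map $\hat\delta : \End(\Id_\Cc) \cong Z(E) \to C(E)$ is a well-defined $k$-linear isomorphism (the first isomorphism being point 4 of Remark \ref{rem:ver}). By Conjecture \ref{conj:S-agrees}(2), for a suitable sign of $\Lambda_L$ we have $\hat\delta(\tildechi_U) = \varphi_U$ for all $U \in \Irr(\Cc)$, and the square \eqref{eq:S-compatible} commutes, i.e.\ $\hat\delta \circ S_\Cc = S_\VOA \circ \hat\delta$, hence also $\hat\delta \circ S_\Cc^{-1} = S_\VOA^{-1} \circ \hat\delta$. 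Finally, the product `$\muldiag$' on $C(E)$ is \emph{defined} in step 3 precisely so that $\hat\delta$ is an algebra isomorphism from $(\End(\Id_\Cc),\circ)$ to $(C(E),\muldiag)$; that is, $\hat\delta(\alpha \circ \beta) = \hat\delta(\alpha) \muldiag \hat\delta(\beta)$.

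With these identifications in hand the computation is a direct transport. Applying $\hat\delta$ to the left-hand side of \eqref{eq:proofplan-cat} and pushing it through each map, I would compute
\begin{align}
	\hat\delta\Big( S_\Cc^{-1}\big( S_\Cc(\tildechi_A) \circ S_\Cc(\tildechi_B)\big)\Big)
	&= S_\VOA^{-1}\Big( \hat\delta\big( S_\Cc(\tildechi_A) \circ S_\Cc(\tildechi_B)\big)\Big)
	\nonumber\\
	&= S_\VOA^{-1}\Big( \hat\delta\big( S_\Cc(\tildechi_A)\big) \muldiag \hat\delta\big( S_\Cc(\tildechi_B)\big)\Big)
	\nonumber\\
	&= S_\VOA^{-1}\big( S_\VOA(\varphi_A) \muldiag S_\VOA(\varphi_B)\big) \ ,
\end{align}
using the $S$-intertwining relation, then the $\muldiag$-multiplicativity of $\hat\delta$, then again the $S$-intertwining relation together with $\hat\delta(\tildechi_U)=\varphi_U$. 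Applying $\hat\delta$ to the right-hand side of \eqref{eq:proofplan-cat} gives $\sum_C N_{A,B}^{~C}\, \hat\delta(\tildechi_C) = \sum_C N_{A,B}^{~C}\, \varphi_C$ by linearity. Comparing with \eqref{eq:compute-NABC} and using that the $\varphi_C$, $C \in \Irr(\Cc)$, are linearly independent (they are the images under the isomorphism $\hat\delta$ of the linearly independent $\tildechi_C$ from Theorem \ref{thm:chiinj}), the constants $N_{AB}^{~C}$ extracted in step 4 equal the structure constants $N_{A,B}^{~C}$ of $\Gr(\Rep(\VOA))$, as claimed.

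The genuinely hard content is not in this proof but in the three conjectures it consumes; the argument above is essentially a bookkeeping exercise once the dictionary is granted. The one point requiring a little care is the compatibility of the sign choices: $S_\Cc$, $\Rad$ and the $\tildechi_U$ all depend on the sign of the integral $\Lambda_L$, so I would make sure to invoke the \emph{same} sign choice supplied by Conjecture \ref{conj:S-agrees}(2) throughout, and to note that $\delta = S_\VOA(\varphi_\VOA)$ corresponds under $\hat\delta$ to $S_\Cc(\tildechi_\VOA) = \id$ (point 2 of Remark \ref{rem:ver}), which is the consistency check that fixes $\delta$ and makes the whole dictionary self-consistent.
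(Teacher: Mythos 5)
Your proposal is correct and follows essentially the same route as the paper's own proof: both reduce the statement to the categorical Verlinde formula of Theorem \ref{thm:ver} (applicable by Theorem \ref{thm:RepV-properties} and Conjecture \ref{conj:fact}) and transport it along the isomorphism $\hat\delta$ using $\hat\delta(\tildechi_U)=\varphi_U$ and the commuting square \eqref{eq:S-compatible} from Conjecture \ref{conj:S-agrees}. The paper's proof is just a terser version of your computation; your extra remarks on the multiplicativity of $\hat\delta$ for `$\muldiag$' (true by the very definition in step 3), the linear independence of the $\varphi_C$, and the consistency of the sign choice for $\Lambda_L$ are all correct and simply make explicit what the paper leaves implicit.
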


\begin{proof}
By Conjecture \ref{conj:S-agrees}, we have $\hat\delta(\tildechi_U) = \varphi_U$, $U \in \Irr(\Rep(\VOA))$. Since, by the same conjecture, diagram \eqref{eq:S-compatible} commutes, it follows that $S_{\Rep(\VOA)}(\tildechi_U) = S_\VOA(\varphi_U)$. 

By Theorem \ref{thm:RepV-properties} and Conjecture \ref{conj:fact}, $\Rep(\VOA)$ satisfies the assumptions of Theorem \ref{thm:ver}. 
Existence and uniqueness of the $N_{AB}^{~C}$ in \eqref{eq:compute-NABC} and the statement that they give the structure constants of the Grothendieck ring now follow from \eqref{eq:ver}.
\end{proof}

\begin{remark}{~}\\[-1.5em]
\label{rem:comments-ver}
\begin{enumerate}
\setlength{\leftskip}{-1em}
\item
We have listed steps 1--4 in this form to stress that knowing the irreducible and projective $\VOA$-modules, the pseudo-trace functions for a projective generator, as well as their behaviour under $\tau \mapsto -1/\tau$ are enough to determine $\Gr(\Rep(\VOA))$. 
	Apart from finding the projective covers in step 1, 
the main difficulty is to compute the $S_\VOA(\varphi_U)$, i.e.\ the central forms corresponding to the $S$-transformation of the characters of the irreducible $\VOA$-modules $U$.
\item
Recall from point 1 of Remark \ref{rem:ver} 
that -- under the assumptions in Theorem \ref{thm:main} -- to evaluate \eqref{eq:compute-NABC} it is enough to know $S_\VOA$ on the linear span of the 
$\varphi_U$, $U \in \Irr(\Rep(\VOA))$. For example, for $N$ pairs of symplectic fermions as treated in Section \ref{sec:sf}, the span of the $\varphi_U$ is four-dimensional, while the dimension of $C(\End_\VOA(G))$ is 
at least $2^{2N-1}+3$ 
(and 	is
conjecturally equal to this number, cf.\ Section \ref{sec:sf}).

\item
In his fundamental paper, Verlinde \cite{Verlinde:1988sn} conjectured the relation between fusion rules and modular properties of characters which now carries his name. See e.g.\ \cite{Fuchs:2006nx} for more details and references regarding proofs of the Verlinde formula in different settings. The first proof valid for all VOAs satisfying the conditions in the beginning of this section, together with rationality (so that $\Rep(\VOA)$ is semisimple) was given in \cite[Thm.\,5.5]{Huang:2004bn}.
\item
Starting with \cite{Flohr:2001zs} and with \cite{Fuchs:2003yu}, a number of works have considered finitely non-semisimple generalisations of the Verlinde formula for the structure constants of the Grothendieck ring, see \cite{Fuchs:2006nx,Flohr:2007jy,Gaberdiel:2007jv,Gainutdinov:2007tc,Pearce:2009pg}. All of these works are concerned with the $W_{1,p}$-triplet models,
for  integer $p\geq2$. 
For these models, a relation similar to~\eqref{eq:S-compatible} was stated in~\cite{[FGST]} -- an equivalence between $SL(2,\Zb)$-actions on the space of 
	vacuum pseudo-traces
for the $W_{1,p}$ VOA 
and on the space of skew-symmetric forms on the centraliser of the
$W_{1,p}$-action on a projective generator.

To our knowledge, the procedure in Theorem \ref{thm:main}, and the conjectures it relies on, give the first precise statement on how to compute fusion rules from modular data for a general class of VOAs with non-semisimple representation theory.

In a promising different approach -- reviewed in \cite{Ridout:2014yfa} -- fusion rules are computed by first passing to a non-$C_2$-cofinite sub-VOA and using a continuum version of the semisimple Verlinde formula.
\end{enumerate}
\end{remark}

\begin{remark}\label{rem:ssi-steps14}
Let us illustrate steps 1--4 in the case that $\VOA$ is in additional rational, where they produce the standard modular Verlinde formula. Along the way, we point out why the conjectures entering Theorem \ref{thm:main} hold in this case.
\begin{enumerate}
\setlength{\leftskip}{-1em}
\item
 By \cite{Huang2005}, $\Cc := \Rep(\VOA)$ is a modular tensor category. In particular, Conjecture \ref{conj:fact} holds.
We have $P_U = U$ for all $U \in \Irr(\Cc)$ and we may choose $G = \bigoplus_{U \in \Irr(\Cc)} U$, so that $\End(G) = \bigoplus_{U \in \Irr(\Cc)} \Cb\,\id_U$.
Note that the $\id_U$ are mutually orthogonal central idempotents.
Conjecture \ref{conj:C(End)-C1(V)-iso} follows from \cite{Zhu1996}.
\item
The irreducible characters are obtained for $\varphi_U(\id_X) = \delta_{U,X}$ ($U,X \in \Irr(\Cc)$). Accordingly, $S_\VOA$ is the matrix describing the modular $S$-transformation of characters
\eqref{eq:ssi-irred-char-S},
$S_\VOA(\varphi_U) = \sum_{X \in \Irr(\Cc)} \mathbb{S}_{U,X} \, \varphi_X$. From this we see that
$\delta = \sum_{X \in \Irr(\Cc)} \mathbb{S}_{\one,X} \, \id_X^*$, which induces a non-degenerate pairing on $\End(G)$ since the $\mathbb{S}_{\one,X}$ are non-zero.

By the proof of \cite[Thm.\,4.5]{Huang2005},
the matrices $s_{A,B}$ and $\mathbb{S}_{A,B}$ are related by\footnote{
    The extra dual appears because the $S_a^b$ defined in \cite[Sect.\,2]{Huang2005} amount to our $\mathbb{S}_{W^a,W^b}$, but the composition of maps in \cite[Eqn.\,(4.1)]{Huang2005} is our $s_{W^a,(W^b)^*}$. In the Verlinde formula, these duals can be absorbed in the sum over $X$.
}
 $s_{A,B} = \mathbb{S}_{A,B^*} / \mathbb{S}_{\one,\one}$. 
From \eqref{eq:ssi-tildechi} we know that $\tildechi_U = (\mathrm{Dim}\Cc)^{\frac12} / \dim(U) \, \id_U \in \End(G)$. 
 We choose the sign of $\Lambda_L$ in \eqref{eq:ssi-LamL} by fixing $(\mathrm{Dim}\Cc)^{\frac12} = (\mathbb{S}_{\one,\one})^{-1}$. With this choice one immediately verifies that $\hat\delta(\tildechi_U) = \id_U^*$ and that \eqref{eq:S-compatible} commutes, i.e.\ that Conjecture \ref{conj:S-agrees} holds.

\item
The induced product is 
	$\varphi_A \muldiag \varphi_B = \delta_{A,B}
(\mathbb{S}_{\one,A})^{-1} \varphi_A$.
\item
Substituting into \eqref{eq:compute-NABC} produces
\be
    N_{AB}^{~C} = \sum_{X \in \Irr(\Cc)} \frac{\mathbb{S}_{A,X} \mathbb{S}_{B,X} (\mathbb{S}^{-1})_{X,C}}{\mathbb{S}_{\one,X}} \ .
\ee
Finally, one may use that $\mathbb{S}$ is symmetric and that $(\mathbb{S}^{-1})_{X,C} = \mathbb{S}_{C^*,X}$.
\end{enumerate}
\end{remark}

\section{Example: Symplectic fermions}\label{sec:sf}

In this section we apply steps 1--4 from Theorem \ref{thm:main} to the VOA $\VOA_\mathrm{ev}$ given by even part of symplectic fermions. 
The CFT of symplectic fermions \cite{Kausch:1995py} forms  an important example of so-called logarithmic CFTs which have a property that $L_0$ cannot be diagonalised (and which therefore involve non-semi\-simple representations). This type CFTs were first studied in \cite{Rozansky:1992rx,Gurarie:1993xq}.
Following indications in \cite{Gurarie:1993xq,Kausch:1995py}, in \cite{Gaberdiel:1996kx,Gaberdiel:1996np} the important observation was made that the tensor product of two simple modules (on which $L_0$ is necessarily diagonalisable) may result in a module on which $L_0$ cannot be diagonalised, but instead has finite-rank Jordan cells.
As we will review below, in this situation one needs a generalised notion of characters -- the pseudo-trace functions -- to describe the modular-group transformation properties of the CFT on a torus.

From a VOA point of view, symplectic fermions form a vertex operator super-algebra~$\VOA$. It depends on a parameter 
$$
 N \in \Zb_{>0} \ ,
$$
which we fix from now on. To describe a specific model, one can speak of ``$N$ pairs of symplectic fermions''. 
The even subspace $\VOA_\mathrm{ev} \subset \VOA$ is a VOA of central charge $c=-2N$.

\begin{theorem}[{\cite[Thm.\,3.12\,\&\,4.2]{Abe:2005}}]
$\VOA_\mathrm{ev}$ satisfies conditions 1--4 in the beginning of Section \ref{sec:voa}. 
\end{theorem}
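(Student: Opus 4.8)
The statement collects four conditions, and the plan is to verify each in turn, with the bulk of the technical work lying in the $C_2$-cofiniteness. First I set up notation, following \cite{Abe:2005}: let $\h$ be the $2N$-dimensional symplectic vector space with nondegenerate antisymmetric pairing $\langle-,-\rangle$ underlying the symplectic fermions, let $\VOA$ denote the associated vertex operator \emph{super}algebra, generated by odd fields of conformal weight $1$ with modes $\chi^a_n$ ($a$ ranging over a basis of $\h$, $n\in\Zb$) satisfying $\{\chi^a_m,\chi^b_n\}=m\,\langle a,b\rangle\,\delta_{m+n,0}$, and let $\sigma$ be the parity automorphism $\chi^a\mapsto-\chi^a$, so that $\VOA_\mathrm{ev}=\VOA^\sigma$ is the even subalgebra.

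Condition 2 (CFT type) is the quickest. Since the generating fields have weight $1$ and every state of $\VOA$ is produced from the vacuum by applying modes $\chi^a_{-n}$ with $n\ge 1$, the grading is non-negative; the weight-$0$ space is $\Cb\one$, and because $\VOA_\mathrm{ev}$ contains only even products of modes its weight-$1$ space vanishes. Thus $(\VOA_\mathrm{ev})_0=\Cb\one$, and I additionally record $(\VOA_\mathrm{ev})_1=0$, which will feed into condition 4.

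Condition 1 ($C_2$-cofiniteness) is the main obstacle and the heart of Abe's Theorem 3.12. The plan is to exhibit a finite spanning set of $\VOA_\mathrm{ev}/C_2(\VOA_\mathrm{ev})$. Starting from the PBW-type spanning of the Fock space by normally ordered monomials in the modes $\chi^a_{-n}$, one uses the defining relations of $C_2$ to discard any monomial containing a mode $\chi^a_{-n}$ with $n\ge 2$, and then reduces the remaining weight-one modes using $\{\chi^a_{-1},\chi^b_{-1}\}=0$, so that the $\chi^a_{-1}$ generate only a finite-dimensional exterior-type algebra on $\h$. Combined with the $\mathfrak{sp}(2N)$-symmetry and the symplectic form, this bounds the number of surviving classes and yields $\dim\,\VOA_\mathrm{ev}/C_2(\VOA_\mathrm{ev})<\infty$. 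I expect this step to require the most care, since one must track the interaction between the fermionic reordering signs and the Virasoro descendant relations — this combinatorial bookkeeping is exactly what \cite{Abe:2005} carries out.

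Conditions 3 and 4 then follow more formally. For simplicity (condition 3, Abe's Theorem 4.2), I would use that the full superalgebra $\VOA$ is simple together with the $\sigma$-eigenspace decomposition $\VOA=\VOA_\mathrm{ev}\oplus\VOA_\mathrm{odd}$; the standard orbifold argument for a finite-order automorphism of a simple vertex superalgebra shows that the even fixed-point algebra $\VOA_\mathrm{ev}$ is simple (and $\VOA_\mathrm{odd}$ is a simple $\VOA_\mathrm{ev}$-module). Finally, for self-contragredience (condition 4) I would invoke Li's theorem on invariant bilinear forms: for a VOA of CFT type the space of invariant bilinear forms is identified with the dual of $(\VOA_\mathrm{ev})_0/L_1(\VOA_\mathrm{ev})_1$, which by condition 2 and $(\VOA_\mathrm{ev})_1=0$ is one-dimensional, so a nonzero invariant form exists; since $\VOA_\mathrm{ev}$ is simple by condition 3, any nonzero invariant form is automatically non-degenerate, yielding the isomorphism $\VOA_\mathrm{ev}\cong\VOA_\mathrm{ev}'$. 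This completes the verification of conditions 1--4.
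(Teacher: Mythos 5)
First, a point of reference: the paper itself offers no proof of this statement --- it is imported wholesale from Abe (Theorems 3.12 and 4.2 of \cite{Abe:2005}), so your proposal can only be measured against Abe's arguments. Your treatment of conditions 2--4 is correct and follows the same route: CFT type is immediate from the weight-$1$ odd generators (and you correctly record $(\VOA_\mathrm{ev})_1=0$, which feeds into condition 4); simplicity of the fixed-point subalgebra under the order-two parity automorphism of the simple vertex operator superalgebra $\VOA$ is the standard orbifold/quantum-Galois argument; and self-contragredience follows from Li's identification of the space of invariant bilinear forms with $\big(V_0/L_1V_1\big)^*$, together with the fact that a nonzero invariant form on a simple VOA has trivial radical. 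Those three steps are sound.

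The genuine gap is in condition 1, which is the main technical content of the theorem. The mechanism you sketch --- discard modes $\chi^a_{-n}$ with $n\ge 2$ via $C_2$ relations, then reduce the $\chi^a_{-1}$ by the exterior-algebra relations --- is the proof of $C_2$-cofiniteness of the \emph{full} superalgebra $\VOA$, where indeed $\VOA/C_2(\VOA)\cong\Lambda(\h)$. It does not transfer to $\VOA_\mathrm{ev}$: the single modes $\chi^a_{-1}$ are odd, hence the corresponding states do not lie in $\VOA_\mathrm{ev}$, and $C_2(\VOA_\mathrm{ev})$ is spanned only by $u_{-2}v$ with \emph{both} $u,v$ even, which is a priori strictly smaller than $C_2(\VOA)\cap\VOA_\mathrm{ev}$ (already $\chi^a_{-2}\chi^b_{-1}\one$ is even and lies in $C_2(\VOA)$, but expressing it modulo $C_2(\VOA_\mathrm{ev})$ requires work). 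There is no formal argument by which $C_2$-cofiniteness descends to a fixed-point subalgebra --- for general cyclic orbifolds this is a deep, much later theorem of Miyamoto --- and Abe's Theorem 3.12 instead constructs, by a direct and lengthy computation with the quadratic (weight-$2$) generators of $\VOA_\mathrm{ev}$, an explicit finite spanning set of $\VOA_\mathrm{ev}/C_2(\VOA_\mathrm{ev})$. Deferring the ``combinatorial bookkeeping'' to \cite{Abe:2005} is legitimate as a citation --- it is exactly what the paper under review does --- but the reduction you describe is not a sketch of that proof and would fail if carried out as stated.
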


Let us describe $\VOA_\mathrm{ev}$ and some of its modules in more detail. Let $\h$ be a symplectic $\Cb$-vector space of dimension $2N$ with symplectic form $(-,-)$. Define two affine Lie super-algebras -- $\widehat\h$ and $\widehat\h_\mathrm{tw}$ -- with underlying super-vector spaces
\be
	\widehat\h = \h \ot \Cb[t^{\pm1}]  \, \oplus \, \Cb \hat k
	\quad , \quad
	\widehat\h_\mathrm{tw} = \h \ot t^{\frac12}\Cb[t^{\pm1}] \, \oplus \, \Cb \hat k \ ,
\ee
where $t^{\pm1}$ and $\hat k$ are parity-even, and $\h$ is parity-odd. For $a \in \h$ and $m \in \Zb$ (resp.\, $m \in \Zb + \frac12$), abbreviate $a_m := a \ot t^m$. The Lie super-bracket is given by taking $\hat k$ central and setting, for $a,b \in \h$ and $m,n \in \Zb$ (resp.\ $m,n \in \Zb + \frac12$),
\be
	[a_m,b_n] \,=\, (a,b) \,m\, \delta_{m+n,0} \,\hat k \ .
\ee
Note that this is an anti-commutator as $a_m$, $b_n$ are parity odd.
We write
\be
	\Rep^\mathrm{fin}_{\flat,1}(\widehat\h)
	\qquad , \qquad
	\Rep^\mathrm{fin}_{\flat,1}(\widehat\h_\mathrm{tw})
\ee
for the categories of $\h_{(\mathrm{tw})}$-modules $M$ in $\sVect_\Cb$,
\begin{itemize}
\item (`$\flat$') which are bounded below in the sense that for each $x \in M$ there is an $L>0$ such that acting with any element of $U(\widehat\h_{(\mathrm{tw})})$ of degree $>L$ on $x$ gives zero,
\item (`1') on which $\hat k$ acts as $1$,
\item (`fin') which have a finite-dimensional space of ground states 
\be\label{eq:gnd-def}
M_\mathrm{gnd} := \{ x \in M \,|\, a_m x = 0 \text{ for all $m>0$ and $a \in \h$ }\} \ .
\ee
\end{itemize}

Let $\Lambda(\h)$ be the exterior algebra for $\h$. Use the even/odd $\Zb$-degree to turn $\Lambda(\h)$ into a super-algebra. As a super-algebra, $\Lambda(\h)$ is commutative. Let
$$
	\Rep^\mathrm{fd}(\Lambda(\h))
$$
be the category finite-dimensional $\Lambda(\h)$-modules in $\sVect_\Cb$. We note that, since the modules are taken in $\sVect_\Cb$, up to isomorphism $\Lambda(\h)$ has {\em two} simple modules: $\Cb^{1|0}$ and $\Cb^{0|1}$.

Define the untwisted and twisted induction $\widehat{(-)}$ as, for $A \in \Rep^\mathrm{fd}(\Lambda(\h))$ and $X \in \sVect_\Cb^\mathrm{fd}$,
\be\label{eq:induction-def}
\widehat A = 
U(\widehat\h) \otimes_{U(\widehat\h_{\ge 0} \oplus \Cb \hat k)} A
\quad , \quad
\widehat X = 
U(\widehat\h_\mathrm{tw}) \otimes_{U(\widehat\h_{\mathrm{tw},>0} \oplus \Cb \hat k)} X
\ . 
\ee
In both cases, $a_m$ with $m>0$ is taken to act as zero on the right hand factor in the tensor product. We have

\begin{proposition}[{\cite[Thm.\,2.4\,\&\,2.8]{Runkel:2012cf}}]
\label{prop:ind-equiv}
$\widehat{(-)}$ 
from \eqref{eq:induction-def}
and $(-)_\mathrm{gnd}$ 
from \eqref{eq:gnd-def}
are mutually inverse equivalences 
\be
\Rep^\mathrm{fd}(\Lambda(\h)) ~\cong~ \Rep^\mathrm{fin}_{\flat,1}(\widehat\h)
\qquad , \qquad
\sVect_\Cb^\mathrm{fd} ~\cong~ \Rep^\mathrm{fin}_{\flat,1}(\widehat\h_\mathrm{tw})
\ee
of $\Cb$-linear categories.
\end{proposition}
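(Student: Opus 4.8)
The plan is to realise the two functors as an instance of an induction/restriction adjunction attached to the triangular decomposition of $\widehat\h$ (resp.\ $\widehat\h_\mathrm{tw}$) and to check that its unit and counit are isomorphisms on the stated subcategories. Write $\widehat\h = \widehat\h_{<0}\oplus\widehat\h_0\oplus\widehat\h_{>0}$, where $\widehat\h_{<0}$ and $\widehat\h_{>0}$ are spanned by the modes $a_m$ with $m<0$ and $m>0$ respectively and $\widehat\h_0 = (\h\otimes t^0)\oplus\Cb\hat k$; since $[a_0,b_0]=0$ and $a_0^2=0$, imposing $\hat k=1$ identifies $U(\widehat\h_0)$ with $\Lambda(\h)$ as superalgebras, which is the source of $\Rep^\mathrm{fd}(\Lambda(\h))$. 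In the twisted case there is no zero mode, so the inducing subalgebra is just $\Cb\hat k$ and the inducing data is $\sVect_\Cb^\mathrm{fd}$; both statements are handled by one argument, with $\Lambda(\h)$ replaced by $\Cb$ in the twisted case. First I would record that $\widehat{(-)}$ from \eqref{eq:induction-def} and $(-)_\mathrm{gnd}$ from \eqref{eq:gnd-def} are well-defined $\Cb$-linear functors between the stated categories: by PBW, $\widehat A\cong U(\widehat\h_{<0})\otimes_\Cb A$ is bounded below in $L_0$-weight, has $\hat k$ acting as $1$, and has finite-dimensional ground states, while $M_\mathrm{gnd}$ is finite-dimensional by the `fin' condition and carries the residual $\Lambda(\h)$-action.

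The core of the proof is the computation $(\widehat A)_\mathrm{gnd}\cong A$, yielding the natural isomorphism $(-)_\mathrm{gnd}\circ\widehat{(-)}\cong\mathrm{id}$. Grading $\widehat A=U(\widehat\h_{<0})\otimes A$ by the weight added by the negative modes, the copy $1\otimes A$ sits in bottom degree and is annihilated by all $a_m$, $m>0$; the claim is that nothing of positive degree is annihilated by every positive mode. This is exactly where nondegeneracy of the symplectic form enters: the relation $[a_m,b_{-m}]=(a,b)\,m\,\hat k$ makes the pairing between positive and negative modes perfect, so the $a_m$, $m>0$, act on $U(\widehat\h_{<0})$ as the fermionic contraction operators of a free-field Fock space, whose joint kernel is precisely the bottom degree. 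I expect this oscillator computation to be the main obstacle, since it is the only step using the specific structure rather than formal adjunction; it can be organised by induction on degree, commuting positive modes to the right until they act on $1\otimes A$.

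With this in hand the remaining isomorphism $\widehat{(-)}\circ(-)_\mathrm{gnd}\cong\mathrm{id}$ is largely formal. For $M\in\Rep^\mathrm{fin}_{\flat,1}(\widehat\h)$, the inclusion $M_\mathrm{gnd}\hookrightarrow M$ together with Frobenius reciprocity produces a canonical graded $\widehat\h$-map $\Phi:\widehat{M_\mathrm{gnd}}\to M$ restricting to the identity on ground states. For \emph{surjectivity} I would show $N:=U(\widehat\h_{<0})\,M_\mathrm{gnd}$ equals $M$: the quotient $M/N$ again lies in $\Rep^\mathrm{fin}_{\flat,1}$, and since the positive modes lower the weight while the `$\flat$' condition bounds weights below, a nonzero such quotient would contain a nonzero ground state, contradicting $N\supseteq M_\mathrm{gnd}$. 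For \emph{injectivity}, take a nonzero $v\in\ker\Phi$ of minimal weight; for $m>0$ one has $\Phi(a_m v)=a_m\Phi(v)=0$ with $a_m v$ of smaller weight, so minimality forces $a_m v=0$, whence $v\in(\widehat{M_\mathrm{gnd}})_\mathrm{gnd}\cong M_\mathrm{gnd}$ by the previous step; but $\Phi$ is injective there, so $v=0$. Finally I would note that both natural isomorphisms are manifestly natural in $A$ and $M$, so $\widehat{(-)}$ and $(-)_\mathrm{gnd}$ are mutually inverse equivalences, and that the twisted statement follows verbatim using $\widehat\h_\mathrm{tw}$, the half-integer modes, and the trivial inducing subalgebra $\Cb\hat k$.
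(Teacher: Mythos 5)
Note first that this paper does not prove the proposition itself: it is imported from \cite[Thm.\,2.4\,\&\,2.8]{Runkel:2012cf}, so your proposal must be measured against the proof given there. Your triangular decomposition, the PBW identification $\widehat A\cong U(\widehat\h_{<0})\otimes A$, and the Fock-space computation $(\widehat A)_\mathrm{gnd}\cong A$ (joint kernel of the annihilation operators is the bottom degree, using non-degeneracy of $(-,-)$) are all correct and are indeed the right starting points. The genuine gap is in your surjectivity step. From $M/N\neq 0$ you produce a nonzero ground state of the \emph{quotient}, i.e.\ a vector $v\in M\setminus N$ with $a_m v\in N$ for all $m>0$, and you claim this contradicts $M_\mathrm{gnd}\subseteq N$. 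It does not: a ground state of $M/N$ need not lift to a ground state of $M$, so no contradiction arises. What you are tacitly assuming is that $(-)_\mathrm{gnd}$ commutes with quotients (right exactness), and that is precisely the nontrivial content to be proved. To see that the step really fails as a formal argument: applied verbatim to a bounded module over an affine Lie algebra with its triangular decomposition, your reasoning would ``prove'' that every such module is generated by its singular vectors, which is false for a non-split extension $0\to L_1\to E\to L_2\to 0$ of highest-weight modules --- there $N=U(\mathfrak{n}^-)E_\mathrm{gnd}=L_1$ and $E/N=L_2\neq 0$ has a ground state, with no contradiction. So the conclusion genuinely depends on special features of the free-fermion algebra that your argument never invokes at this point.

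The missing ingredient, and the route of the cited proof, is a structure theorem over the subalgebra generated by the nonzero modes: each pair of modes $a_m,b_{-m}$ ($m>0$) generates a finite-dimensional Clifford algebra which is a matrix superalgebra (again by non-degeneracy), and the `$\flat$' condition makes the action locally finite; it follows that every bounded module with $\hat k=1$ is, over the nonzero modes, a direct sum of copies of the Fock module $F=U(\widehat\h_{<0})$, i.e.\ $M\cong F\otimes M_\mathrm{gnd}$ with $M_\mathrm{gnd}$ the multiplicity space carrying the residual zero-mode action. This single statement yields at once that $M$ is generated by its ground states, that $\Phi$ is injective, and that $(-)_\mathrm{gnd}$ is exact --- after which your quotient argument is unnecessary. (Equivalently, you could patch your proof by showing directly that $a_m v\in N$ for all $m>0$ forces $v\in N+M_\mathrm{gnd}$, but that is the same semisimplicity-over-the-Clifford-algebra statement in disguise.) A smaller repairable defect: your injectivity argument picks a nonzero element of $\ker\Phi$ of ``minimal weight'', but $M$ carries no a priori grading, so $\ker\Phi$ is not obviously homogeneous; one must first observe that the normal-ordered $L_0$ is well defined on bounded modules by `$\flat$', commutes with $\Phi$, and its generalised eigenspaces on $\widehat{M_\mathrm{gnd}}$ recover the induction grading --- though with the Clifford structure theorem in hand this detour is also not needed.
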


Let us abbreviate
\be
	\SF(\h) ~=~\SF_0 \oplus \SF_1
	\qquad \text{ with } \quad
	\SF_0 = \Rep^\mathrm{fd}(\Lambda(\h))
	~~,~~\SF_1 = \sVect_\Cb^\mathrm{fd} \ .
\ee
We denote the simple objects of $\SF(\h)$ as
\be\label{eq:SF-simples}
	\one = \Cb^{1|0} \in \SF_0
	~~,~~
	\Pi\one = \Cb^{0|1} \in \SF_0
	~~,~~
	T = \Cb^{1|0} \in \SF_1
	~~,~~
	\Pi T = \Cb^{0|1} \in \SF_1
	\ .
\ee
Here $\one$ is the tensor unit of a monoidal structure on $\SF(\h)$ (see Remark \ref{rem:SF-example} below), $\Pi$ is the parity-exchange functor, and $T$ stands for ``twisted''. Their projective covers are
\be\label{eq:SF-proj-covers}
	P_\one = \Lambda(\h)
	~~,~~
	P_{\Pi\one} = \Pi\Lambda(\h)
	~~,~~
	P_T = T
	~~,~~
	P_{\Pi T} = \Pi T \ .
\ee

Let $(-)_\mathrm{ev}$ be the functor from super-vector spaces to vector spaces consisting of taking the even subspace. 
	The even part of the symplectic fermion vertex operator super-algebra is given by $\VOA_\mathrm{ev} = (\widehat\one)_\mathrm{ev}$.
As a consequence of \cite{Abe:2005} we get:

\begin{proposition}\label{prop:hath-RepVev}
$(-)_\mathrm{ev} : \Rep^\mathrm{fin}_{\flat,1}(\widehat\h) \oplus \Rep^\mathrm{fin}_{\flat,1}(\widehat\h_\mathrm{tw}) \to \Rep(\VOA_\mathrm{ev})$ is a faithful $\Cb$-linear 
	exact
functor.
\end{proposition}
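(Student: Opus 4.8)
The plan is to factor the functor as a composite and treat faithfulness as the only genuinely nonobvious point. Since $\VOA=\widehat\one$ is the free symplectic fermion vertex operator super-algebra, generated by the fields $a(z)=\sum_m a_m z^{-m-1}$ with $a\in\h$, an untwisted $\VOA$-module is the same datum as a bounded-below, level-$1$ module over the mode algebra $\widehat\h$, and a canonically (parity-)twisted $\VOA$-module is the same as such a module over $\widehat\h_\mathrm{tw}$; I would take this from \cite{Abe:2005} (see also \cite{Kausch:1995py,Runkel:2012cf}). For such a module $M=M_\mathrm{ev}\oplus M_\mathrm{odd}$ in $\sVect_\Cb$, restriction along $\VOA_\mathrm{ev}\hookrightarrow\VOA$ makes both $M_\mathrm{ev}$ and $M_\mathrm{odd}$ into honest $\VOA_\mathrm{ev}$-modules: the even subalgebra is fixed by the parity automorphism, so even the twisted sector restricts to an untwisted $\VOA_\mathrm{ev}$-action preserving the parity grading. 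The functor in question is then the composite of this restriction with the projection $M\mapsto M_\mathrm{ev}$.

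The $\Cb$-linearity and exactness are immediate: restriction does not change underlying (super-)vector spaces, and taking the even part is an exact $\Cb$-linear functor $\sVect_\Cb\to\Vect(\Cb)$, being projection onto a direct summand. A short exact sequence in the source is in particular one of underlying super-vector spaces, and applying $(-)_\mathrm{ev}$ degreewise yields a short exact sequence of $\VOA_\mathrm{ev}$-modules. To check that $M_\mathrm{ev}$ really lands in $\Rep(\VOA_\mathrm{ev})$, I would invoke Proposition \ref{prop:ind-equiv}: every object of the source is freely induced, $M\cong\widehat A$ with $A$ finite-dimensional, so as a graded vector space $M\cong U(\widehat\h_{<0})\otimes A$ (resp.\ $U(\widehat\h_{\mathrm{tw},<0})\otimes X$). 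Hence the generalised $L_0$-eigenspaces of $M$ are finite-dimensional with real parts bounded below, and the graded subspace $M_\mathrm{ev}$ inherits the quasi-finiteness condition defining $\Rep(\VOA_\mathrm{ev})$.

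The main obstacle is faithfulness, since a priori $(-)_\mathrm{ev}$ could annihilate a morphism supported entirely on odd parts. Here I would use Proposition \ref{prop:ind-equiv} again: a morphism $f\colon M\to M'$ corresponds to an even map $\bar f\colon A\to A'$ of ground states (a $\Lambda(\h)$-module map in the untwisted case, a map in $\sVect_\Cb^\mathrm{fd}$ in the twisted case, where there are no zero modes), and $f\ne 0$ iff $\bar f\ne 0$, because $f$ commutes with the annihilation modes $a_m$, $m>0$, hence sends $M_\mathrm{gnd}=A$ into $A'$ and is recovered by induction. Suppose $\bar f\ne 0$. If it is nonzero on $A_\mathrm{ev}$ then $f_\mathrm{ev}$ is already nonzero there. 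Otherwise $\bar f(a)=a'\ne 0$ for some odd ground state $a\in A_\mathrm{odd}$; choosing $c\in\h$ nonzero and applying the lowest creation mode $c_{-1}$ (resp.\ $c_{-1/2}$ in the twisted case), which is parity-odd, gives $c_{-1}a\in M_\mathrm{ev}$ with $f(c_{-1}a)=c_{-1}a'$, and this is nonzero because $M'$ is free over the negative modes. Thus $f_\mathrm{ev}\ne 0$, which establishes faithfulness. The essential mechanism is that the free (PBW) structure of the induced modules transports any odd datum into the even part by a single creation operator, so no morphism is lost on passing to even parts.
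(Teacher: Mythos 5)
Your proof is essentially correct, but it routes the one genuinely delicate point differently from the paper, and the difference matters. The paper's proof is concerned almost exclusively with well-definedness: that the mode action of $v_n$, $v \in \VOA_\mathrm{ev}$, on $M_\mathrm{ev}$ actually satisfies the axioms of a $\VOA_\mathrm{ev}$-module. It establishes this by bootstrapping from the only two cases Abe verifies, namely that $\widehat{\Lambda(\h)}$ and $\widehat T$ carry $\VOA_\mathrm{ev}$-module structures: by Proposition \ref{prop:ind-equiv} every object is induced, every induced module is a quotient of a finite direct sum of (parity shifts of) these two, and $(-)_\mathrm{ev}$ preserves quotients, so the module axioms descend. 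You instead front-load this point into the blanket assertion that a bounded-below level-$1$ $\widehat\h$-module (resp.\ $\widehat\h_\mathrm{tw}$-module) ``is the same datum'' as an untwisted (resp.\ parity-twisted) $\VOA$-module, citing \cite{Abe:2005}. That citation is too optimistic: Abe proves the module axioms only for the two specific modules above, not the general mode-algebra-to-VOSA-module reconstruction statement, and the nontrivial direction of your ``same datum'' claim (a Jacobi-identity/locality verification for arbitrary bounded-below modules) is exactly what the paper's quotient argument is designed to avoid invoking. The good news is that you already hold all the pieces to repair this along the paper's lines: since $A$ is a quotient of a free $\Lambda(\h)$-module and induction is exact by PBW, your $M \cong \widehat A$ is a quotient of $\widehat{\Lambda(\h)}^{\oplus n}$ (with parity shifts), and the axioms pass to quotients.

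On the remaining points your proposal actually supplies more than the paper does: the paper's proof is silent on linearity, exactness, faithfulness, and quasi-finiteness, all of which you verify. Your quasi-finiteness check via $M \cong U(\widehat\h_{<0}) \ot A$ is correct, and your faithfulness argument --- pass to the ground-state map $\bar f$ via the equivalence, then transport a nonzero odd datum into the even part with a single creation mode $c_{-1}$ (resp.\ $c_{-1/2}$), using PBW freeness of the target --- is valid. For the record, faithfulness also follows without induced modules: if $f$ vanishes on $M_\mathrm{ev}$, then for odd $x$ and $(a,b) \neq 0$, $m \neq 0$, the relation $a_m b_{-m} + b_{-m} a_m = (a,b)\, m \, \id_M$ writes $f(x)$ in terms of values of $f$ on even vectors, so $f = 0$; this works for any level-$1$ module.
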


\begin{proof}
Each $\widehat\h$ or $\widehat\h_\mathrm{tw}$ module $M$ defines uniquely an action of all $v_n$, $v \in \VOA_\mathrm{ev}$, $n \in \Zb$ on $M_\mathrm{ev}$. It remains to show that $M_\mathrm{ev}$ is indeed a $\VOA_\mathrm{ev}$-module. 
In \cite[Sect.\,4.1\,\&\,5.1]{Abe:2005}, $\widehat{\Lambda(\h)}$ and $\widehat T$ are shown to be a $\VOA_\mathrm{ev}$-modules. As $\VOA_\mathrm{ev}$-modules we have $\widehat{\Lambda(\h)} = (\widehat{P_\one})_\mathrm{ev} \oplus (\widehat{P_{\Pi \one}})_\mathrm{ev}$ and $\widehat T = (\widehat{P_T})_\mathrm{ev} \oplus (\widehat{P_{\Pi T}})_\mathrm{ev}$. Since $\widehat{(-)}$ is an equivalence by 
Proposition \ref{prop:ind-equiv}, this shows that the images of all indecomposable projectives under $(-)_\mathrm{ev}$ give well-defined $\VOA_\mathrm{ev}$-modules. Since $(-)_\mathrm{ev}$ preserves quotients, $M_\mathrm{ev}$ is a $\VOA_\mathrm{ev}$-module for every $M \in \Rep^\mathrm{fin}_{\flat,1}(\widehat\h) \oplus \Rep^\mathrm{fin}_{\flat,1}(\widehat\h_\mathrm{tw})$.
\end{proof}

Combining Propositions~\ref{prop:ind-equiv} and~\ref{prop:hath-RepVev} we thus obtain:

\begin{corollary}\label{cor:faith-fun_SF-RepVev}
The $\Cb$-linear functor
\be\label{eq:faith-fun_SF-RepVev}
	(\widehat{-})_\mathrm{ev} : \SF(\h) \xrightarrow{~\sim~} \Rep( \VOA_\mathrm{ev})
\ee
is faithful.
\end{corollary}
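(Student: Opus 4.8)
The plan is to realise $(\widehat{-})_\mathrm{ev}$ as a composite of two functors that are already under control and then invoke the elementary fact that a composite of faithful functors is faithful. Concretely, I would observe that the functor in \eqref{eq:faith-fun_SF-RepVev} factors as
\be
	\SF(\h) \xrightarrow{\;\widehat{(-)}\;} \Rep^\mathrm{fin}_{\flat,1}(\widehat\h) \oplus \Rep^\mathrm{fin}_{\flat,1}(\widehat\h_\mathrm{tw}) \xrightarrow{\;(-)_\mathrm{ev}\;} \Rep(\VOA_\mathrm{ev}) \ ,
\ee
where on the summand $\SF_0 = \Rep^\mathrm{fd}(\Lambda(\h))$ the first arrow is untwisted induction and on $\SF_1 = \sVect_\Cb^\mathrm{fd}$ it is twisted induction, exactly as in \eqref{eq:induction-def}.

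Next I would argue that the first arrow is an equivalence, hence faithful. Proposition \ref{prop:ind-equiv} gives that untwisted induction is an equivalence $\Rep^\mathrm{fd}(\Lambda(\h)) \cong \Rep^\mathrm{fin}_{\flat,1}(\widehat\h)$ and that twisted induction is an equivalence $\sVect_\Cb^\mathrm{fd} \cong \Rep^\mathrm{fin}_{\flat,1}(\widehat\h_\mathrm{tw})$. Taking the direct sum of these two equivalences yields an equivalence of all of $\SF(\h)$ with the intermediate category, so $\widehat{(-)}$ is in particular faithful.

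Finally, the second arrow $(-)_\mathrm{ev}$ is faithful by Proposition \ref{prop:hath-RepVev}. Since a composite of faithful functors is faithful, it follows that $(\widehat{-})_\mathrm{ev}$ is faithful, which is the assertion of the corollary.

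There is essentially no real obstacle here: all the substantive work has already been carried out in Propositions \ref{prop:ind-equiv} and \ref{prop:hath-RepVev}, and the corollary is a formal consequence of composing them. The only point that warrants a moment's care is checking that the two displayed arrows genuinely compose to the functor $(\widehat{-})_\mathrm{ev}$ of the statement, i.e.\ that the direct-sum decomposition $\SF(\h) = \SF_0 \oplus \SF_1$ is matched by induction with the decomposition $\Rep^\mathrm{fin}_{\flat,1}(\widehat\h) \oplus \Rep^\mathrm{fin}_{\flat,1}(\widehat\h_\mathrm{tw})$ of the intermediate category; this is immediate from the definitions of twisted and untwisted induction in \eqref{eq:induction-def}.
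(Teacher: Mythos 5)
Your proposal is correct and is precisely the paper's own argument: the corollary is stated there as the immediate combination of Proposition \ref{prop:ind-equiv} (the induction functors are equivalences, hence faithful) with Proposition \ref{prop:hath-RepVev} (faithfulness of $(-)_\mathrm{ev}$), i.e.\ the same factorisation through $\Rep^\mathrm{fin}_{\flat,1}(\widehat\h) \oplus \Rep^\mathrm{fin}_{\flat,1}(\widehat\h_\mathrm{tw})$ that you spell out. Your extra check that the summand decomposition of $\SF(\h)$ matches the untwisted/twisted induction is a reasonable point of care, but it is built into the definitions in \eqref{eq:induction-def}, exactly as you note.
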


To match the irreducibles in \eqref{eq:SF-simples} to those obtained in \cite{Abe:2005}, we give their lowest $L_0$-eigenvalue $h$ (conformal weight):
\begin{center}
\begin{tabular}{c|cccc}
repn. & $(\widehat\one)_\mathrm{ev}$ &  $(\widehat{\Pi\one})_\mathrm{ev}$ &  
$(\widehat T)_\mathrm{ev}$ &  $(\widehat{\Pi T})_\mathrm{ev}$ \\[.2em]
\hline
 \\[-.8em]
$h$ & 0 & 1 & $\tfrac{-N}8$ & $\tfrac{-N+4}8$
\end{tabular}
\end{center}

\medskip

We will now go through steps 1--4 in Section \ref{sec:voa}. In doing so, we first give the statements we can make that do not rely on any conjectures and then state the conjectures as we need them in order to finally apply Theorem~\ref{thm:main}. For example, Conjecture~\ref{conj:SF-RepV} below
states that the functor \eqref{eq:faith-fun_SF-RepVev} is an equivalence. (It is even conjectured to be a ribbon-equivalence, see Remark~\ref{rem:SF-example}.)

\subsubsection*{Step 1}

Consider the 
projective generator $G := P_\one \,\oplus\, P_{\Pi\one} \,\oplus\, T \,\oplus\, \Pi T \in \SF(\h)$ and the corresponding
$\VOA_\mathrm{ev}$-module
$\Gc := (\widehat{G})_\mathrm{ev}$. The conjectures below will imply that $\Gc$ is a projective generator of $\Rep( \VOA_\mathrm{ev})$, but we will not need this for now.
For the computation of pseudo-trace functions it will be convenient (as in \cite{Abe:2005,Arike:2011ab}) to rewrite $\Gc$ as
\be\label{eq:G-repn-for-trace-calc}
	\Gc ~=~ \widehat{\Lambda(\h)} \,\oplus\, \widehat{T} \ ,
\ee
	considered as a $\VOA_\mathrm{ev}$-module, rather than a $\VOA$-module.
Applying the faithful functor \eqref{eq:faith-fun_SF-RepVev} gives an injective algebra homomorphism
\be
	(\widehat{-})_\mathrm{ev} : 
	\End_{\SF}(G) \hookrightarrow \End_{\VOA_\mathrm{ev}}(\Gc) \ ,
\ee
whose image we denote by
 $\mathcal{E} \subset \End_{\VOA_\mathrm{ev}}(\Gc)$. It is shown in~\cite[Lem.\,6.2]{FGR2} that the subalgebra $\mathcal{E}$ can be described as 
\be\label{eq:E-EndVG-abbrev}
	\Ec = \Ec_0 \oplus \Ec_1
	\qquad \text{where}
	\quad
	\Ec_0 = \Lambda(\h)\rtimes \Cb\Zb_2
	~~,~~
	\Ec_1 = \Cb e_T \rtimes \Cb\Zb_2 \ .
\ee
Here, 
$e_T$ denotes the
idempotent for 	$\widehat T = (\widehat T)_\mathrm{ev} \oplus (\widehat{\Pi T})_\mathrm{ev}$, 
and $a \in \h$ 
acts on $\widehat{\Lambda(\h)}$ by the zero-mode $a_0$. In both summands, the generator $\kappa$ of $\Zb_2$ acts
by parity involution $\omega_{\Gc}$ on $\Gc$.
The parity involution $\omega_X$ on a super-vector space $X$ acts on a homogeneous element $x \in X$ by $\omega_X(x) = (-1)^{|x|} x$, where $|x| \in \{0,1\}$ is the $\Zb_2$-degree of $x$.

In \cite[Sect.\,6.1]{Arike:2011ab}, the subalgebra $\Ec_0$ is considered (called $P$ there), and it is shown in \cite[Sect.\,6.2]{Arike:2011ab} that $\widehat{\Lambda(\h)}$ is a projective $\Ec_0$-module. Furthermore, $\widehat T$ is free as an $\Ec_1$-module. Altogether:

\begin{lemma}[{\cite{Arike:2011ab}}]\label{lem:G-proj-E_SF}
$\Gc$ is projective as an $\Ec$-module.
\end{lemma}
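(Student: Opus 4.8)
The plan is to reduce projectivity to the two sectors of $\SF(\h)$ separately and then quote the projectivity already established there. The structural point I would establish first is that \eqref{eq:E-EndVG-abbrev} expresses $\Ec$ as a \emph{direct product} of the algebras $\Ec_0$ and $\Ec_1$, not merely as a vector-space direct sum. Since $\Ec$ is the image of the injective algebra map $(\widehat{-})_\mathrm{ev}:\End_\SF(G)\hookrightarrow\End_{\VOA_\mathrm{ev}}(\Gc)$, it suffices to see this in $\End_\SF(G)$: the generator $G = (P_\one\oplus P_{\Pi\one})\oplus(T\oplus\Pi T)$ has its first summand in $\SF_0$ and its second in $\SF_1$, and as $\SF(\h)=\SF_0\oplus\SF_1$ carries no morphisms between the two sectors, $\End_\SF(G)$ — and hence $\Ec$ — splits as a product of its untwisted block $\Ec_0$ and its twisted block $\Ec_1$, with orthogonal central idempotents $\id_{\widehat{\Lambda(\h)}}$ and $e_T$ summing to $\id_\Gc$.

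Given the direct-product structure, the category of $\Ec$-modules is the product of the categories of $\Ec_0$- and $\Ec_1$-modules, and an $\Ec$-module is projective if and only if each of its two sector-components is projective over the corresponding factor. First I would read off from \eqref{eq:G-repn-for-trace-calc} that $\Gc$ decomposes as $\widehat{\Lambda(\h)}\oplus\widehat T$, with $\widehat{\Lambda(\h)}$ an $\Ec_0$-module on which $\Ec_1$ acts as zero, and $\widehat T$ an $\Ec_1$-module on which $\Ec_0$ acts as zero. Thus the question splits into showing that $\widehat{\Lambda(\h)}$ is projective over $\Ec_0$ and that $\widehat T$ is projective over $\Ec_1$.

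Both of these are supplied by \cite{Arike:2011ab}: it is proved in \cite[Sect.\,6.2]{Arike:2011ab} that $\widehat{\Lambda(\h)}$ is projective over $\Ec_0 = \Lambda(\h)\rtimes\Cb\Zb_2$, while $\widehat T$ is free — and so projective — over $\Ec_1$. Assembling the two sectors via the previous paragraph then gives that $\Gc$ is projective over $\Ec$. The only substantive input is this sector-wise projectivity from \cite{Arike:2011ab}, in particular the projectivity of $\widehat{\Lambda(\h)}$ over $\Lambda(\h)\rtimes\Cb\Zb_2$, which rests on an explicit description of the module; the reduction to blocks is formal. The step I expect to require the most care is verifying the vanishing of the cross-Hom spaces between the twisted and untwisted sectors, since this is what makes the two idempotents central and legitimises the factorisation of the module category; once that is in place the remainder follows mechanically.
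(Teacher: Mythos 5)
Your proposal is correct and takes essentially the same route as the paper: the paper likewise reads off the block decomposition $\Ec = \Ec_0 \oplus \Ec_1$ from \eqref{eq:E-EndVG-abbrev} and $\Gc = \widehat{\Lambda(\h)} \oplus \widehat{T}$ from \eqref{eq:G-repn-for-trace-calc}, and then deduces the lemma from the sector-wise facts of \cite{Arike:2011ab}, namely that $\widehat{\Lambda(\h)}$ is projective over $\Ec_0$ and $\widehat{T}$ is free over $\Ec_1$. The only difference is that you make explicit the formal assembly step (centrality of the two orthogonal idempotents and the resulting splitting of the module category), which the paper compresses into the word ``Altogether''.
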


If we assume the conjectures below, then $\Gc$ is a projective generator and the above lemma would also follow from Proposition \ref{prop:G-EndG-is-proj}. But for now let us proceed without making conjectures.

\medskip

For later use we fix the $\Ec$-module maps 
	$\pi = \pi_0 \oplus \pi_1$ and $\iota=\iota_0\oplus\iota_1$ 
needed in \eqref{eq:trace-map-def} to define the trace map:
\be
	\Ec_0 \otimes U(\widehat\h_{<0}) \xrightarrow{\pi_0} \widehat{\Lambda(\h)} \xrightarrow{\iota_0} \Ec_0 \otimes U(\widehat\h_{<0})
	~~,\quad
	\Ec_1 \otimes U(\widehat\h_{\mathrm{tw},<0}) \xrightarrow{\pi_1} \widehat{T} \xrightarrow{\iota_1} \Ec_1 \otimes U(\widehat\h_{\mathrm{tw},<0}) \ .
\ee
To define the maps $\pi_j$, $\iota_j$, first
note that by Poincar\'e-Birkhoff-Witt and by the definition of induced modules we have
 $\widehat{\Lambda(\h)} = U(\widehat\h_{<0}) \ot \Lambda(\h)$ and $\widehat{T} = U(\widehat\h_{\mathrm{tw},<0})$. We set
\begin{align}
\pi_0(f \ot u) &= f(u \ot \oneL) \ ,  &
\iota_0(u \ot \lambda) &= (-1)^{|u||\lambda|} \tfrac12\big( \lambda \ot u + \K  
\,\omega_{\Ec_0}(\lambda)\ot\omega_{U(\widehat\h_{<0})}(u)\big) \ ,
\nonumber\\
\pi_1(f \ot u) &= f(u) \ , &
\iota_1(x) &= \tfrac12\big(e_T \ot x + \K e_T \ot \omega_{U(\widehat\h_{\mathrm{tw},<0})}(x)\big) \ .
\label{eq:pi-iota-for-SF}
\end{align}
One verifies that $\pi_j, \iota_j$ are $\Ec_j$-module maps, and that $\pi_j \circ \iota_j = \id$.

\subsubsection*{Step 2}

Thanks to Lemma~\ref{lem:G-proj-E_SF} we can apply the theory of \cite{Arike:2011ab} and obtain the pseudo-trace function from Section \ref{sec:ptfun} for the $\VOA_\mathrm{ev}$-module $\Gc$ relative to $\Ec$. For a central form $\varphi \in C(\Ec)$, the pseudo-trace function is
\begin{align}
	\xi^\varphi_\Gc
	&= \mathrm{tr}_{U(\widehat\h_{<0})}\big(\, \big\{(\varphi|_{\Ec_0} \ot \id) \circ \iota_0\big\} \, o(v) \, e^{2 \pi i \tau (L_0 + N/12)}\circ  (\id_{U(\widehat\h_{<0})}\otimes \oneL)  \,\big)
	\nonumber\\
	&\qquad
	+ ~\mathrm{tr}_{U(\widehat\h_{\mathrm{tw},<0})}\big(\, \big\{(\varphi|_{\Ec_1} \ot \id) \circ \iota_1\big\} \, o(v) \, e^{2 \pi i \tau (L_0 + N/12)} \,\big) \ .
	\label{eq:pstr-for-SF}
\end{align}
and it is explicitly computed in Appendix \ref{app:Scalc}. From this explicit computation (see Lemma~\ref{lem:xi-inj-minus1-modes}) or from \cite[Thm.\,6.3.2]{Arike:2011ab} we get:

\begin{lemma}\label{lem:xiG-inj}
The map $\xi_\Gc : C(\Ec) \to C_1(\VOA_\mathrm{ev})$ in \eqref{eq:pstr-for-SF} is injective.
\end{lemma}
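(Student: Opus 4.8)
The plan is to prove injectivity by constructing an explicit left inverse: I will show that any $\varphi \in C(\Ec)$ can be reconstructed from finitely many leading Laurent coefficients of $\xi^\varphi_\Gc(v,\tau)$ as $v$ ranges over a small distinguished set of states. Equivalently, I assume $\xi^\varphi_\Gc = 0$ identically on $\VOA_\mathrm{ev}\times\mathbb{H}$ and aim to deduce $\varphi = 0$. First I would exploit that $\SF_0$ and $\SF_1$ lie in different blocks, so that the description \eqref{eq:E-EndVG-abbrev} is in fact a product of algebras $\Ec=\Ec_0\times\Ec_1$ and accordingly $C(\Ec)=C(\Ec_0)\oplus C(\Ec_1)$. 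Correspondingly, the first summand of \eqref{eq:pstr-for-SF} depends only on $\varphi|_{\Ec_0}$ and the second only on $\varphi|_{\Ec_1}$. Since the two summands begin at different powers of $q = e^{2\pi i\tau}$ (the untwisted one at $q^{N/12}$, the twisted one at $q^{-N/24}$, differing by $N/8 > 0$), they have disjoint $q$-expansion supports and cannot interfere. Hence $\xi^\varphi_\Gc = 0$ forces each summand to vanish separately, and it suffices to prove injectivity of each map $\varphi|_{\Ec_j}\mapsto(\text{summand }j)$ on $C(\Ec_j)$.

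Next I would take the test vectors to be the \emph{$-1$-mode states}, i.e.\ even states in $\VOA_\mathrm{ev}$ built by applying products of modes $a_{-1}$ ($a\in\h$) to the vacuum. The zero mode $o(v)$ of such a $v$ acts on the ground space $\Lambda(\h)$ of $\widehat{\Lambda(\h)}$ as the corresponding product of zero modes $a_0$, that is, as multiplication by a monomial generating $\Lambda(\h)\subset\Ec_0$. Extracting the coefficient of the lowest power of $q$ in $\xi^\varphi_\Gc(v,\tau)$ then annihilates all contributions of descendant states in $U(\widehat\h_{<0})$ and leaves exactly the value of $\varphi$ on the element $\{(\id\ot\,\cdot\,)\circ\iota_0\}$ of $\Ec_0$ determined by $v$. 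Using the explicit maps $\iota_0,\iota_1$ from \eqref{eq:pi-iota-for-SF} together with the $\Zb_2$-grading by $\K$, these leading coefficients are shown to range over the values of $\varphi$ on a spanning set of $\Ec_0$ (monomials in $\Lambda(\h)$ and their $\K$-translates) and of $\Ec_1$ (the elements $e_T$ and $\K e_T$). This is precisely the content of the explicit computation carried out in Appendix \ref{app:Scalc} and summarised in Lemma \ref{lem:xi-inj-minus1-modes}.

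The routine but delicate part, and the main obstacle, is the explicit evaluation of \eqref{eq:pstr-for-SF}: one must compute the graded traces over the Fock-type spaces $U(\widehat\h_{<0})$ and $U(\widehat\h_{\mathrm{tw},<0})$, whose characters are infinite products, and verify that after stripping off these universal prefactors the remaining finite data faithfully encode $\varphi$. The care lies in (i) correctly tracking the sign factors $(-1)^{|u||\lambda|}$ and the parity involution $\omega$ entering $\iota_0,\iota_1$, and in (ii) disentangling, via centrality of $\varphi$ and the $\K$-symmetrisation in $\iota_0$, the pairing of $\varphi$ against $\lambda$ from that against $\K\,\omega(\lambda)$, so that no direction of $C(\Ec_0)$ is left undetected. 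Once the leading coefficients are identified with the values of $\varphi$ on a spanning set of $\Ec$, injectivity is immediate: $\xi^\varphi_\Gc=0$ forces all these values to vanish, whence $\varphi=0$. As the statement records, one may instead invoke \cite[Thm.\,6.3.2]{Arike:2011ab} directly in place of this computation.
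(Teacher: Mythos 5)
Your overall skeleton is the paper's: the lemma is proved there by the explicit evaluation of \eqref{eq:pstr-for-SF} in Appendix \ref{app:Scalc} with exactly the test insertions $w$ built from $-1$-modes, summarised in Lemma \ref{lem:xi-inj-minus1-modes} (or alternatively by citing Arike--Nagatomo). However, one step you rely on is genuinely false: the claim that the untwisted and twisted summands have disjoint $q$-supports. The untwisted exponents lie in $\tfrac{N}{12}+\Zb_{\ge 0}$, while the twisted ones lie in $-\tfrac{N}{24}+\tfrac12\Zb_{\ge 0}$ --- the steps are \emph{half}-integers because $\Gc$ contains both $(\widehat T)_\mathrm{ev}$ and $(\widehat{\Pi T})_\mathrm{ev}$, whose lowest weights $-\tfrac{N}{8}$ and $\tfrac{-N+4}{8}$ differ by $\tfrac12$. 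These two sets intersect precisely when $\tfrac{N}{8}\in\tfrac12\Zb$, i.e.\ whenever $4\mid N$: already for $N=4$ the lowest weight of $(\widehat{\Pi T})_\mathrm{ev}$ is $0$, equal to that of the vacuum, so identical powers of $q$ occur in both summands and ``cannot interfere'' does not follow from support considerations. The separation the paper actually uses is linear independence of the concrete functions appearing in \eqref{eq:xi0-final} and \eqref{eq:xi1-final} --- the products $\prod_j\chi^{N=1}_{ns,\pm}(\tau_j)$ and $\prod_j\chi^{N=1}_{r,\pm}(\tau_j)$ together with their $\tau$-derivatives and the polynomial-in-$\tau$ prefactors coming from the nilpotents $\gamma_j$ --- and these distinct eta-type products are independent even where their $q$-supports overlap. (Note also that restricting to insertions with $s\ge 1$ does kill the twisted summand via the $\delta_{s,0}$ in \eqref{eq:xi1-final}, but such $w$ only detect part of $\Ec_0$, so the $s=0$ case, where both sectors contribute, cannot be avoided.)

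A second, related inaccuracy: the disentangling of $\varphi|_{\Ec_0}(\lambda)$ from $\varphi|_{\Ec_0}(\K\,\omega(\lambda))$ is not achieved ``via centrality of $\varphi$ and the $\K$-symmetrisation in $\iota_0$''. The symmetrisation in $\iota_0$ merely attaches these two values to the two different character products $\prod_j\chi^{N=1}_{ns,+}$ and $\prod_j\chi^{N=1}_{ns,-}$; since both series start as $q^{N/12}(1+O(q))$, your ``lowest Laurent coefficient'' yields only the combination $\tfrac12\big(\varphi(x)+\varphi(\K x)\big)$, and separating the two again requires the linear independence of the two character families (subleading coefficients, or the different $\tau$-polynomial structure), which is what the appendix computation exploits. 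Both defects are repairable by exactly the independence argument of Lemma \ref{lem:xi-inj-minus1-modes}, and since you ultimately defer to that lemma your proposal reconstructs the paper's route; but as a self-contained argument the support-disjointness step fails for all $N\equiv 0 \pmod 4$.
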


As pointed out in \cite[Cor.\,6.3.3]{Arike:2011ab}, for $N>1$ the pseudo-trace functions cannot be separated when just evaluating on the vacuum vector $v=\one$.
However, we show in Lemma~\ref{lem:xi-inj-minus1-modes} that restricting $v$ to products of modes $a_{-1}$, $a \in \h$, is sufficient.

\medskip

It will be technically convenient, but potentially confusing, to first pick an arbitrary central form $\eps \in C(\Ec)$ such that $\Ec$ becomes a symmetric Frobenius algebra. This form is different from $\delta$, which is one of the outcomes of step 2 and will be computed in due course.

Let $\{\alpha^i\,|\,i=1,\dots,2N\}$ be a symplectic basis of $\h$ with $(\alpha^{2n-1},\alpha^{2n})=1$, $n=1,\dots,N$. 
On $\Ec_0$ take $\eps$ to be non-vanishing 
only on elements in the top-degree of $\Lambda(\h)$ multiplied by the generator $\K\in\Cb\Zb_2$. 
On $\Ec_1$ it is non-zero only on the generator $\K \in \Cb\Zb_2$. The normalisation is 
\be\label{eq:SF-eps-norm}
	\eps( \alpha^1 \cdots \alpha^{2N} \K ) = 1
	\quad , \quad
	\eps( e_T \K ) = 1 \ .
\ee
The map $\eps : \Ec\to \Cb$ turns $\Ec$ into a symmetric Frobenius algebra \cite[Prop.\,6.1.2]{Arike:2011ab}. The centre of $\Ec$ is $Z(\Ec) = Z(\Ec_0) \oplus Z(\Ec_1)$ with (see \cite[Prop.\,6.1.3]{Arike:2011ab})
\be\label{eq:centerE}
	Z(\Ec_0) \,=\, \Lambda(\h)_\mathrm{ev} \,\oplus\, \Cb \alpha^1 \cdots \alpha^{2N} \K  \quad,\qquad Z(\Ec_1) = \Ec_1 \ .
\ee
From Lemma \ref{lem:ZA-CA-iso} we know that $\hat\eps : Z(\Ec) \to C(\Ec)$, $\hat\eps(z) = \eps(z \cdot (-))$ is an isomorphism. Instead of computing $\varphi_U \in C(\Ec)$, $U \in \Irr(\Rep(\VOA_\mathrm{ev}))$, we will give elements $c_U \in Z(\Ec)$ with $\hat\eps(c_U) = \varphi_U$. 
(We write $c_U$ here and reserve $\tildechi_U$ for the element satisfying $\hat\delta(\tildechi_U) = \varphi_U$ in Corollary \ref{cor:SF-irr-and-S} below.) 
We have
\begin{align}
c_{\one} &= \alpha^1 \cdots \alpha^{2N} \, (\K+1) \ ,
&
c_{T} &= e_T \, (\K+1) \ ,
\nonumber\\
c_{\Pi\one} &= \alpha^1 \cdots \alpha^{2N} \, (\K-1) \ ,
&
c_{\Pi T} &= e_T \, (\K-1) \ .
\label{eq:varphiU-SF}
\end{align}
This follows straightforwardly from the definition of $\varphi_U$ in \eqref{eq:varphiM-via-tr}.
Alternatively one may use \eqref{varphiU} to fix the $\varphi_U$, in which case \eqref{eq:varphiU-SF} can be obtained from \cite[Thm.\,6.3.2]{Arike:2011ab} or 
from direct calculation using \eqref{eq:pi-iota-for-SF} and \eqref{eq:pstr-for-SF} (see also the more explicit expressions \eqref{eq:pseudo-for-z123} and \eqref{eq:pseudo-for-ZLam}
and comments below them in Appendix \ref{app:Scalc}). 

\medskip

At this point we need to make a conjecture to proceed.

\begin{conjecture}\label{conj:SF-ZE=C1V}
The injective map $\xi_\Gc$ 
from Lemma~\ref{lem:xiG-inj} is an isomorphism.
\end{conjecture}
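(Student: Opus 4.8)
The plan is to reduce the statement to a single dimension count. Since $\xi_\Gc : C(\Ec) \to C_1(\VOA_\mathrm{ev})$ is already known to be injective by Lemma~\ref{lem:xiG-inj}, and since the description of the centre in \eqref{eq:centerE} together with the isomorphism $C(\Ec) \cong Z(\Ec)$ of Lemma~\ref{lem:ZA-CA-iso} gives
\[
\dim_\Cb C(\Ec) ~=~ \dim_\Cb Z(\Ec) ~=~ \big(2^{2N-1}+1\big) + 2 ~=~ 2^{2N-1}+3 ,
\]
it suffices to establish the matching upper bound $\dim_\Cb C_1(\VOA_\mathrm{ev}) \le 2^{2N-1}+3$. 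Injectivity already forces $\dim_\Cb C_1(\VOA_\mathrm{ev}) \ge 2^{2N-1}+3$, so equality would make $\xi_\Gc$ surjective, hence an isomorphism. The entire content of the statement is therefore the exact determination of $\dim_\Cb C_1(\VOA_\mathrm{ev})$.

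First I would secure finiteness with an explicit a~priori bound. Because $\VOA_\mathrm{ev}$ is $C_2$-cofinite, Zhu's recursion relations express $\xi(v,\tau)$, for $v$ of high conformal weight, through $\xi(v',\tau)$ with $v'$ of strictly lower weight, up to corrections given by quasi-modular Eisenstein series times lower one-point functions. Iterating, every $\xi \in C_1(\VOA_\mathrm{ev})$ is determined by its values on a fixed finite-dimensional subspace of test vectors (reflecting $C_2$-cofiniteness), so $C_1(\VOA_\mathrm{ev})$ is finite-dimensional with a bound coming from the combinatorics of that subspace. The task is then to sharpen this crude bound to exactly $2^{2N-1}+3$.

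The computational heart is a direct free-field evaluation. Symplectic fermions are free fields, so on the summands of $\Gc = \widehat{\Lambda(\h)} \oplus \widehat{T}$ the graded traces $\mathrm{tr}\,\big(o(v)\,e^{2\pi i \tau(L_0 - c/24)}\big)$, together with their pseudo-trace refinements coming from the non-semisimple (Jordan-block) action of $L_0$ on $\widehat{\Lambda(\h)}$, can be evaluated by Wick contraction in terms of $\eta$- and $\theta$-type functions and their logarithmic partners. I would show that these functions already realise everything produced by the recursion, matching them against the explicit basis of pseudo-trace functions computed in Appendix~\ref{app:Scalc}. This would identify the image of $\xi_\Gc$ with all of $C_1(\VOA_\mathrm{ev})$ and deliver the upper bound.

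The main obstacle is precisely this upper bound: the injectivity of $\xi_\Gc$ and the explicit computations of Appendix~\ref{app:Scalc} furnish a candidate basis and the lower bound for free, but ruling out any further torus one-point functions is hard. The available spanning result, Miyamoto's theorem \cite[Thm.\,5.5]{Miyamoto:2002ar}, produces $C_1(\VOA_\mathrm{ev})$ from symmetric functionals on the higher Zhu algebras $A_n(\VOA_\mathrm{ev})$ for \emph{all} $n$, not merely from the single module $\Gc$; reducing all of these to central forms on $\Ec$, i.e.\ showing that $\Gc$ already captures every pseudo-trace, is the delicate point, and it is exactly what keeps the general Conjecture~\ref{conj:C(End)-C1(V)-iso} from being a theorem. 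For $N=1$ this identification is known \cite{Flohr:2005cm,Arike:2011ab}, and the route above is a concrete strategy to extend it to all $N$, but the uniform control of the logarithmic modular forms and of the growing family of algebras $A_n$ is the step I expect to be genuinely difficult.
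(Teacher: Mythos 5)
There is nothing in the paper to compare your proposal against: the statement is Conjecture~\ref{conj:SF-ZE=C1V}, and the paper deliberately leaves it unproven. The only support offered is indirect evidence for the general Conjecture~\ref{conj:C(End)-C1(V)-iso} -- the $N=1$ case known from \cite{Flohr:2005cm,Arike:2011ab}, and, for the triplet $W_{1,p}$ models, the match between the $(3p-1)$-dimensional space of vacuum pseudo-traces of \cite{AM09} and $\dim C(E)$. All of the paper's subsequent results in Section~\ref{sec:sf} are proved \emph{assuming} this conjecture, not deriving it.

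Your reduction is correct as far as it goes: by \eqref{eq:centerE} and Lemma~\ref{lem:ZA-CA-iso} one has $\dim_\Cb C(\Ec) = \dim_\Cb Z(\Ec) = 2^{2N-1}+3$, and injectivity (Lemma~\ref{lem:xiG-inj}) gives the lower bound $\dim_\Cb C_1(\VOA_\mathrm{ev}) \ge 2^{2N-1}+3$, so the conjecture is equivalent to the matching upper bound. But the proposal does not prove that upper bound, and the step where it would have to appear is asserted rather than argued: the claim that Wick-contraction evaluations of the pseudo-traces for $\Gc$ ``already realise everything produced by the recursion'' is exactly the surjectivity statement in disguise. $C_2$-cofiniteness yields finite-dimensionality of $C_1(\VOA_\mathrm{ev})$ (this is already Theorem~\ref{thm:C1-mod-inv} territory, due to Zhu and Dong--Li--Mason), but the Zhu recursion does not by itself compute the dimension; and the only known spanning result, Miyamoto's \cite[Thm.\,5.5]{Miyamoto:2002ar}, produces pseudo-trace functions from symmetric functionals on the $n$-th Zhu algebras $A_n(\VOA_\mathrm{ev})$ for all $n$, whose reduction to central forms on the single subalgebra $\Ec \subset \End_{\VOA_\mathrm{ev}}(\Gc)$ is precisely what is missing. (Note also that the paper only knows $\Ec$ to be a \emph{subalgebra} of $\End_{\VOA_\mathrm{ev}}(\Gc)$; equality requires Conjecture~\ref{conj:SF-RepV}, so even identifying ``all pseudo-traces for $\Gc$'' with $C(\Ec)$ quietly invokes a second conjecture.) You candidly name this obstacle yourself in the final paragraph, which is to your credit -- but it means the proposal is a research programme, not a proof, and the gap it leaves open is the entire content of the conjecture.
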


Under the above conjecture, in Appendix~\ref{app:Scalc} the modular 
$S$-transformation
 on pseudo-trace functions is computed, as well as the induced action $S_{\VOA_\mathrm{ev}}$ on $C(\Ec)$ as in \eqref{eq:SVOA-on-CE}.
Let $\tilde S_Z : Z(\Ec) \to Z(\Ec)$ be the transport of this
$S$-transformation
from $C(\Ec)$ to $Z(\Ec)$ via $\hat\eps$, i.e.\ $\hat\eps \circ \tilde S_Z = S_{\VOA_\mathrm{ev}} \circ \hat\eps$. 
(As with $c_U$ and $\phi_U$, we write $\tilde S_Z$ here and reserve $S_Z$ for the transported $S$-action computed with respect to $\hat\delta$ in Corollary \ref{cor:SF-irr-and-S} below.) 
To describe $\tilde S_Z$, we need some notation.

Let $\beta^1,\beta^2 \in \Cb^2$ be the standard basis. Identify $\Lambda(\Cb^2)^{\ot N} \cong \Lambda(\h)$ as super-algebras by sending $\beta^1$ in the $n$'th tensor factor to $\alpha^{2n-1}$ and $\beta^2$ to $\alpha^{2n}$. On $\Lambda(\Cb^2)$ let	$\sigma$ 
be the linear map whose representing matrix in the basis $( 1, \beta^1, \beta^2, \beta^2 \beta^1 )$ is
\be
\sigma = 
	\begin{pmatrix}
	0 & 0 & 0 & (-2 \pi)^{-1} \\
	0 & -i & 0 & 0 \\
	0 & 0 & -i & 0 \\
	-2 \pi & 0 & 0 & 0 \\
	\end{pmatrix}
\ee
	Note that $\sigma$ is parity-even.
To give $\tilde S_Z$ it is helpful to split $Z(\Ec)$ into sub-spaces as
\be\label{eq:ZE-splitting-Lam_P}
	Z(\Ec) = Z_\Lambda \oplus Z_P
	\quad , \quad
	Z_\Lambda = \Lambda(\h)_\mathrm{ev}
	~~,~~
	Z_P = \Cb \alpha^1 \cdots \alpha^{2N} \K
	\oplus \Ec_1 \ .
\ee
In Appendix \ref{app:Scalc} we prove:

\begin{lemma}\label{lem:tildeS-trans-SF}
	Assuming Conjecture \ref{conj:SF-ZE=C1V},
$\tilde S_Z$ is of the form 
$\tilde S_Z = \tilde S_{Z_\Lambda} \oplus \tilde S_{Z_P}$, where
\be
	\tilde S_{Z_\Lambda} : Z_\Lambda \to Z_\Lambda
	~~,\quad
	\tilde S_{Z_\Lambda} = \bigl(\sigma \otimes \cdots \otimes \sigma\bigr)\big|_{Z_{\Lambda}}
	\quad
	\text{($N$ factors)} \ ,
\ee
and, in the basis $( \tfrac12(c_{\one} + c_{\Pi\one}) , c_T , c_{\Pi T} )$,
\be\label{eq:tildeSP-mat}
	\tilde S_{Z_P} = 
	\begin{pmatrix}
	0 & 2^N & -2^N \\
	2^{-N-1} & 2^{-1} & 2^{-1} \\
	-2^{-N-1} & 2^{-1} & 2^{-1}
	\end{pmatrix} \qquad .
\ee
\end{lemma}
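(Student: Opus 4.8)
The plan is to compute the pseudo-trace functions $\xi^{\hat\eps(z)}_\Gc$ explicitly as functions of $\tau$ for each central element $z\in Z(\Ec)$, apply the modular $S$-transformation $\tau\mapsto-1/\tau$, and re-expand the result in the pseudo-trace basis. Since $\xi_\Gc$ is injective by Lemma~\ref{lem:xiG-inj} and, under Conjecture~\ref{conj:SF-ZE=C1V}, surjective, the coordinates of $\xi^{\hat\eps(z)}_\Gc|_S$ in this basis are exactly the entries of $S_{\VOA_\mathrm{ev}}$ in the basis $\{\hat\eps(z)\}$, equivalently of $\tilde S_Z$ in the basis $\{z\}$. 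Concretely I would evaluate \eqref{eq:pstr-for-SF} using the explicit maps \eqref{eq:pi-iota-for-SF}: the untwisted summand reduces to a graded trace over the fermionic Fock space $U(\widehat\h_{<0})$ combined with the zero-mode action of $\Lambda(\h)$ on its ground states, where $o(a_{-1}\one)=a_0$ acts by wedging with $a$, and the twisted summand to a graded trace over $U(\widehat\h_{\mathrm{tw},<0})$. The $\K$-dependent part of $\iota_0$ splits the untwisted contribution into a ``$\K$-free'' piece, whose oscillator factor is the supertrace $\prod_{n\geq1}(1-q^n)^{2N}$ (an $\eta$-type product), and a ``$\K$'' piece, whose oscillator factor is the ordinary trace $\prod_{n\geq1}(1+q^n)^{2N}$ (a $\theta$-type product); here $q=e^{2\pi i\tau}$ and $c=-2N$.

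The crucial simplification is that, because the $N$ fermion pairs are mutually independent, everything factorises over $\Lambda(\h)\cong\Lambda(\Cb^2)^{\otimes N}$: for a pure tensor $z=z_1\otimes\cdots\otimes z_N$ probed by products of $a_{-1}$-modes, both the oscillator trace and the zero-mode trace factorise into $N$ single-pair contributions. This reduces the $Z_\Lambda$-computation to $N=1$, where one computes the four pseudo-trace functions attached to the basis $(1,\beta^1,\beta^2,\beta^2\beta^1)$ of $\Lambda(\Cb^2)$: these are built from $\eta(\tau)^{-2}$, its degree-one partners, and a $\tau$-linear (logarithmic) partner produced by the nilpotent zero modes acting on the top form. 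Applying $\tau\mapsto-1/\tau$ via $\eta(-1/\tau)=\sqrt{-i\tau}\,\eta(\tau)$ and tracking how the automorphy factor and the $\tau$-linear term recombine yields the single-pair matrix $\sigma$: the degree-one states pick up the weight factor giving the $-i$ entries, while the scalar and the top form are exchanged, the $\pm2\pi$ normalisations coming precisely from inverting the $\tau$-linear partner under $\tau\mapsto-1/\tau$. Tensoring gives $\tilde S_{Z_\Lambda}=\sigma^{\otimes N}|_{Z_\Lambda}$, and since $\sigma$ is parity-even it preserves $\Lambda(\h)_\mathrm{ev}$.

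For the $Z_P$-block I would treat the three remaining generators together: the untwisted top form with $\K$ (a $\theta$-type, trace-like object with oscillator factor $\prod_{n\geq1}(1+q^n)^{2N}$) and the two twisted generators $c_T,c_{\Pi T}$ spanning $\Ec_1$, whose pseudo-traces involve the twisted products $\prod_{n\geq1}(1\pm q^{n-1/2})^{2N}$ together with the $q^{N/12}$ prefactors encoding the twisted ground-state weights $h=-N/8,(-N+4)/8$. Using the classical $S$-transformations of the Jacobi theta functions -- $\theta_2\leftrightarrow\theta_4$ exchanged, $\theta_3$ fixed, all dressed by $\sqrt{-i\tau}$ -- I would re-express the $S$-transform of each of these three functions in the same three functions and read off $\tilde S_{Z_P}$. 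The powers $2^{\pm N}$ in \eqref{eq:tildeSP-mat} track the ground-state multiplicities of the untwisted-$\K$ and twisted sectors together with the normalisation in the $S$-transformation between them, while the factors $\tfrac12$ come from the $(\K\pm1)$-projectors in \eqref{eq:varphiU-SF}.

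The block-diagonality $\tilde S_Z=\tilde S_{Z_\Lambda}\oplus\tilde S_{Z_P}$ then follows from the structural fact that the modular group preserves the split between $\eta$-type products (self-dual under $S$), which carry $Z_\Lambda$, and $\theta$-type products (which mix $\theta_2,\theta_3,\theta_4$ among themselves), which carry $Z_P$. I expect the main obstacle to be bookkeeping all normalisation constants simultaneously: the interplay of the automorphy factor $\sqrt{-i\tau}$ (appearing to power $-N$), the $\tau$-linear terms from the non-diagonalisable $L_0$-action that feed the $(-2\pi)^{\pm1}$ entries of $\sigma$, and the powers of $2$ from ground-state multiplicities must all be reconciled with the chosen normalisation \eqref{eq:SF-eps-norm} of $\eps$ and with the identification \eqref{eq:varphiU-SF} of the $c_U$. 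Pinning down the finite, $\tau$-independent part exactly -- rather than merely the transformation up to the automorphy factor -- is where the care is needed, and this is what the explicit computation in Appendix~\ref{app:Scalc} carries out.
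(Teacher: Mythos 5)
Your proposal is correct and takes essentially the same route as the paper's computation in Appendix~\ref{app:Scalc}: factorise over the $N$ fermion pairs, express the pseudo-traces through the $N{=}1$ characters (whose $S$-transformations \eqref{eq:N=1-Strans} are exactly your $\theta_2\leftrightarrow\theta_4$, $\theta_3$-fixed rules, since $\chi^{N=1}_{ns,+}=\theta_2/(2\eta)$, $\chi^{N=1}_{r,\pm}=\theta_{3/4}/\eta$), extract the $(-2\pi)^{\pm1}$ entries of $\sigma$ from the $\tau$-linear terms produced by the nilpotent part of $L_0$ via $e^{2\pi i\gamma_j\tau_j}=1+2\pi i\gamma_j\tau_j$, and use injectivity (Lemma~\ref{lem:xi-inj-minus1-modes}) together with the surjectivity supplied by Conjecture~\ref{conj:SF-ZE=C1V} to identify the $S$-transformed functions as pseudo-traces and read off $\tilde S_Z$, including the block-diagonality from the fact that the $\eta$-type products carrying $Z_\Lambda$ do not mix with the $\theta$-type products carrying $Z_P$. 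One cosmetic slip: the untwisted supertrace is $\chi^{N=1}_{ns,-}(\tau)=\eta(\tau)^{2}$, not $\eta(\tau)^{-2}$ (the theory is fermionic, $c=-2N$), which is precisely what yields the automorphy factor $-i\tau$ per pair that your bookkeeping of the $-i$ entries of $\sigma$ relies on.
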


From this, we see that $\tilde S_Z(c_\one) = (2 \pi)^{-N} 
1_{\Lambda} + 2^{-N}e_T$. By definition, $\delta \in C(\Ec)$
 is given by 
\be
\delta = S_{\VOA_\mathrm{ev}}(\varphi_\one) = \hat\eps(\tilde S_Z(c_\one)) \ .
\ee 
Explicitly, $\delta$ has the same kernel as $\eps$ and the normalisation of $\delta$ is given by 
\be
	\delta( \alpha^1 \cdots \alpha^{2N} \K ) = (2 \pi)^{-N}
	\quad , \quad
	\delta( e_T \K ) = 2^{-N} \ .
\ee
Note that $\delta$ indeed induces a non-degenerate pairing on $\Ec$. Let us collect the remaining output of step 2. To do so, we use that 
$\hat\delta^{-1} ( \hat\eps(z)) = \big((2 \pi)^{N} 1_{\Lambda} + 2^{N} e_T \big) \cdot z$.

\begin{corollary}\label{cor:SF-irr-and-S}
	Assuming Conjecture \ref{conj:SF-ZE=C1V},
the central forms $\varphi_U$, $U\in\Irr(\Rep(\VOA_\mathrm{ev}))$ are given by
 $\varphi_U = \hat\delta(\tildechi_U)$ with
\begin{align}
 \tildechi_{\one} &= (2\pi)^N \alpha^1 \cdots \alpha^{2N} \, (\K+1) \ ,
&
 \tildechi_{T} &= 2^N \,e_T \, (\K+1) \ ,
\nonumber\\
 \tildechi_{\Pi\one} &= (2\pi)^N \alpha^1 \cdots \alpha^{2N} \, (\K-1) \ ,
&
 \tildechi_{\Pi T} &= 2^N \,e_T \, (\K-1) \ .
\end{align}
The modular $S$-transformation 
$S_Z$ on $Z(\Ec)$ is determined by that on $C(\Ec)$ via
$S_Z = \hat\delta^{-1} \circ S_{\VOA_\mathrm{ev}} \circ \hat\delta$. 
The linear map $S_Z$ decomposes as
$S_Z = S_{Z_\Lambda} \oplus S_{Z_P}$, where $S_{Z_\Lambda} = \tilde S_{Z_\Lambda}$ and $S_{Z_P}$ is represented by the same matrix as in \eqref{eq:tildeSP-mat}, but now with respect to the basis $( \tfrac12( \tildechi_{\one} +  \tildechi_{\Pi\one}) ,  \tildechi_T ,  \tildechi_{\Pi T} )$.
\end{corollary}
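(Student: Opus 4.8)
The plan is to reduce the whole statement to a single change-of-normalisation element relating the two central forms $\eps$ and $\delta$, and then to run a clean intertwiner argument. First I would make the relation between $\eps$ and $\delta$ precise. Since $\delta = S_{\VOA_\mathrm{ev}}(\varphi_\one) = \hat\eps(\tilde S_Z(c_\one))$ and Lemma~\ref{lem:tildeS-trans-SF} yields $\tilde S_Z(c_\one) = (2\pi)^{-N}\oneL + 2^{-N} e_T$, one obtains at once
\be
 \delta(-) = \eps\big( g\cdot(-)\big)
 \quad,\qquad
 g = (2\pi)^{-N}\oneL + 2^{-N} e_T \ .
\ee
The element $g$ is central and invertible, acting as the scalar $(2\pi)^{-N}$ on the block $\Ec_0$ and as $2^{-N}$ on $\Ec_1$, with inverse $g^{-1} = (2\pi)^N \oneL + 2^N e_T$. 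As a sanity check this is consistent with $\delta$ and $\eps$ having the same kernel, which is immediate from \eqref{eq:SF-eps-norm} and the stated normalisation of $\delta$.

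Next I would identify the isomorphism $\Phi := \hat\delta^{-1}\circ\hat\eps$ of $Z(\Ec)$. From $\hat\delta(z) = \delta(z\cdot(-)) = \eps(gz\cdot(-)) = \hat\eps(gz)$ we read off $\hat\delta = \hat\eps\circ(\text{mult.\ by }g)$, hence $\Phi$ is multiplication by $g^{-1}$, i.e. $\hat\delta^{-1}(\hat\eps(z)) = g^{-1}z$. Applying this to $z = c_U$, and using that $c_U$ from \eqref{eq:varphiU-SF} lies in $\Ec_0$ for $U\in\{\one,\Pi\one\}$ and in $\Ec_1$ for $U\in\{T,\Pi T\}$, the block-scalars in $g^{-1}$ reproduce precisely the factors $(2\pi)^N$ and $2^N$ in the asserted formulas, since $\tildechi_U = \hat\delta^{-1}(\varphi_U) = \hat\delta^{-1}(\hat\eps(c_U)) = g^{-1}c_U$.

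For the $S$-transformation I would take $S_Z := \hat\delta^{-1}\circ S_{\VOA_\mathrm{ev}}\circ\hat\delta$ as the transport, and combine it with the defining relation $\hat\eps\circ\tilde S_Z = S_{\VOA_\mathrm{ev}}\circ\hat\eps$ to get
\be
 S_Z = \Phi\circ\tilde S_Z\circ\Phi^{-1} \ .
\ee
The key observation is that $\Phi$ is an intertwiner of $\tilde S_Z$ with $S_Z$ which simultaneously carries the basis $(c_U)$ to the basis $(\tildechi_U)$, because $\Phi(c_U) = g^{-1}c_U = \tildechi_U$. An intertwiner carrying one basis to another leaves the matrix representation unchanged: if $\tilde S_Z c_j = \sum_i \tilde M_{ij}\, c_i$, then $S_Z\tildechi_j = \Phi\tilde S_Z c_j = \sum_i \tilde M_{ij}\,\tildechi_i$. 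This gives immediately that $S_Z$ has matrix \eqref{eq:tildeSP-mat} in the basis $(\tfrac12(\tildechi_\one+\tildechi_{\Pi\one}),\tildechi_T,\tildechi_{\Pi T})$. Finally, since $g^{\pm1}$ are scalar on each block, $\Phi$ preserves the splitting $Z(\Ec)=Z_\Lambda\oplus Z_P$, so $S_Z$ respects it as well; on $Z_\Lambda$ the conjugation by the block-scalar $(2\pi)^{\mp N}$ is trivial, giving $S_{Z_\Lambda}=\tilde S_{Z_\Lambda}$.

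I do not expect a genuine obstacle here: all the analytic content --- the modular $S$-transformation of the pseudo-trace functions and the resulting $\tilde S_Z$ --- is already packaged in Lemma~\ref{lem:tildeS-trans-SF}, proved in the appendix. The only point demanding care is the coincidence that the same block-scalars $(2\pi)^{\pm N}$ and $2^{\pm N}$ govern both the conjugation defining $S_Z = \Phi\tilde S_Z\Phi^{-1}$ and the basis change $c_U\mapsto\tildechi_U = g^{-1}c_U$; it is precisely this matching that leaves the matrix invariant. This is not accidental but forced by the construction of $\hat\delta$, whose normalisation was fixed via $\delta = S_{\VOA_\mathrm{ev}}(\varphi_\one)$ exactly so that $\hat\delta(\tildechi_\one)=\varphi_\one$ and the $S$-compatibility of Conjecture~\ref{conj:S-agrees} hold.
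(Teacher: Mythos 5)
Your proposal is correct and follows essentially the same route as the paper: the paper's (largely implicit) proof consists precisely of recording the identity $\hat\delta^{-1}(\hat\eps(z)) = \big((2\pi)^{N}\oneL + 2^{N}e_T\big)\cdot z$ — your multiplication by $g^{-1}$ — and reading off the corollary from Lemma \ref{lem:tildeS-trans-SF}, with your intertwiner/basis-transport observation being exactly the unstated bookkeeping that makes the matrix \eqref{eq:tildeSP-mat} carry over unchanged and gives $S_{Z_\Lambda}=\tilde S_{Z_\Lambda}$ from the block-scalar action of $g^{\pm1}$.
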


\subsubsection*{Step 3}

Since we already transported the calculation to 
	$Z(\Ec)$ in step 2, 
there is nothing to do in step 3.

\subsubsection*{Step 4}

In $Z(\Ec)$, the equation \eqref{eq:compute-NABC} for the structure constants reads
(note that via the isomorphism $\hat\delta$
the multiplication~`$\muldiag$' on $C(\Ec)$ corresponds to the
multiplication on $Z(\Ec)$)
\be\label{eq:NABC-def-SF}
	S_Z^{-1}\big( \, S_Z(\tildechi_A) \cdot S_Z(\tildechi_B) \, \big)
	~= \hspace{-1em}
	\sum_{C \in \Irr(\Rep(\VOA_\mathrm{ev}))} \hspace{-1em} N_{AB}^{~C} ~ \tildechi_C \ .
\ee
We compute
\begin{align}
S_Z(\tildechi_\one) &= 1_\Lambda + e_T = 
1_{\Ec} \ ,
&
S_Z(\tildechi_T) &= 2^{N-1}(\tildechi_\one + \tildechi_{\Pi\one}) + 2^N \K e_T \ ,
\nonumber\\
S_Z(\tildechi_{\Pi\one}) &= -1_\Lambda + e_T \ ,
&
S_Z(\tildechi_{\Pi T}) &= -2^{N-1}(\tildechi_\one + \tildechi_{\Pi\one}) + 2^N \K e_T \ ,
\end{align}
The structure constants $N_{AB}^{~C}$ in \eqref{eq:NABC-def-SF} then result in the following products of generators in $\Gr(\Rep(\VOA_\mathrm{ev}))$:
\begin{align}
&[\Pi\one] \ast [\Pi\one] = [\one]
~~,\quad
[\Pi\one] \ast [T] = [\Pi T]
~~,\quad
[\Pi\one] \ast [\Pi T] = [T] \ ,
\nonumber \\
&[T] \ast [T] = [T] \ast [\Pi T] = [\Pi T] \ast [T] = [\Pi T] \ast [\Pi T] 
	= 2^{2N-1} ([\one]+[\Pi\one]) \ .
\label{eq:Gr(SF)-product}
\end{align}
Here we use the notation `$\ast$' for the product computed from $S_Z$.
We would now like to apply  Theorem~\ref{thm:main} to conclude that the product `$\ast$' 
agrees with the usual product on $\Gr(\Rep(\VOA_\mathrm{ev}))$ induced from the tensor product. 
	This requires one more conjecture.

The braided monoidal structure on $\Rep(\VOA_\mathrm{ev})$ has so far not been computed in the VOA setting of \cite{Huang:2010}. However, in \cite{Davydov:2012xg,Runkel:2012cf} the structure of a finite ribbon category was determined on $\SF(\h)$ via a conformal block calculation for symplectic fermions. Furthermore, by Theorem \ref{thm:factequiv} and \cite[Prop.\,5.3]{Davydov:2012xg}, $\SF(\h)$ is factorisable.
We need the following stronger version of Corollary~\ref{cor:faith-fun_SF-RepVev} (see \cite[Conj.\,7.4]{Davydov:2016euo} for a more precise formulation).

\begin{conjecture}\label{conj:SF-RepV}
$(\widehat{-})_\mathrm{ev} : \SF(\h) \xrightarrow{\sim} \Rep( \VOA_\mathrm{ev})$ is an equivalence of $\Cb$-linear ribbon categories.
\end{conjecture}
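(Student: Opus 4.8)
The plan is to prove the statement in two stages: first that $(\widehat{-})_\mathrm{ev}$ is an equivalence of $\Cb$-linear abelian categories, and then that it is compatible with the braided and ribbon structures. For the abelian part I would argue by Morita theory. Let $G = P_\one \oplus P_{\Pi\one} \oplus T \oplus \Pi T$ be the projective generator of $\SF(\h)$ fixed in Step 1 of this section, with image $\Gc = (\widehat{G})_\mathrm{ev}$. The functor is faithful by Corollary \ref{cor:faith-fun_SF-RepVev} and exact, being the composite of the equivalence $\widehat{(-)}$ of Proposition \ref{prop:ind-equiv} with the exact functor $(-)_\mathrm{ev}$ of Proposition \ref{prop:hath-RepVev}. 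Standard arguments (five lemma plus preservation of cokernels) then reduce the equivalence to two points: (i) $\Gc$ is a projective generator of $\Rep(\VOA_\mathrm{ev})$, and (ii) the induced algebra map $\End_{\SF}(G) \to \End_{\VOA_\mathrm{ev}}(\Gc)$, whose image is $\Ec$ by \eqref{eq:E-EndVG-abbrev}, is surjective. Indeed, given (i) and (ii), the functors $\Hom_{\SF}(G,-)$ and $\Hom_{\VOA_\mathrm{ev}}(\Gc,-)$ identify both categories with finite-dimensional modules over the same algebra $\Ec$, and $(\widehat{-})_\mathrm{ev}$ realises this identification.

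For (i) I would combine the explicit Loewy structure of the induced modules $\widehat{\Lambda(\h)}$ and $\widehat T$ with the classification of simple $\VOA_\mathrm{ev}$-modules in \cite{Abe:2005}, checking that the images of the $P_U$ in \eqref{eq:SF-proj-covers} are the projective covers of the corresponding simples and that the four simples in \eqref{eq:SF-simples} exhaust $\Irr(\Rep(\VOA_\mathrm{ev}))$. The main obstacle in the abelian part is (ii): ruling out $\VOA_\mathrm{ev}$-intertwiners of $\Gc$ that do not come from $\SF(\h)$. The cleanest route I see is a dimension count, bounding $\dim_\Cb \End_{\VOA_\mathrm{ev}}(\Gc)$ from above by dimensions of spaces of conformal blocks (equivalently by the block structure forced by the fusion rules) and matching it with $\dim_\Cb \Ec = 2^{2N+1}+2$ from \eqref{eq:E-EndVG-abbrev}. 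A priori the logarithmic VOA could admit extra endomorphisms mixing the summands, so establishing this bound analytically is exactly the content of (ii).

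The genuinely hard part, and the reason this is stated as a conjecture, is the ribbon compatibility. The braided monoidal structure on $\SF(\h)$ is explicitly known from the conformal-block computations of \cite{Davydov:2012xg,Runkel:2012cf}, whereas on $\Rep(\VOA_\mathrm{ev})$ it is defined through the logarithmic tensor product theory of \cite{Huang:2010} and has not been evaluated directly. I would try to equip $(\widehat{-})_\mathrm{ev}$ with natural tensor-structure isomorphisms by computing the $P(z)$-tensor products of the induced modules in the free-field realisation: the relevant intertwining operators can be written in terms of the symplectic-fermion fields, and their monodromy and conformal weights should reproduce the braiding $c$ and the twist $\theta = e^{2\pi i L_0}$ on $\SF(\h)$. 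Matching the twist is comparatively direct, since the conformal weights are recorded in the table preceding this conjecture; the hexagon and pentagon axioms would then follow from associativity of the fermionic operator product expansions.

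A more structural alternative — which I expect to be the most promising — is a uniqueness argument: both categories are factorisable finite ribbon categories (for $\SF(\h)$ by \cite[Prop.\,5.3]{Davydov:2012xg} with Theorem \ref{thm:factequiv}, for $\Rep(\VOA_\mathrm{ev})$ by Conjecture \ref{conj:fact}) sharing the same underlying abelian category by the first stage and the same twist eigenvalues and fusion rules. If such data pins down the braided structure up to equivalence, then the conjecture follows formally. Establishing either the explicit tensor-product computation in the logarithmic framework or such a rigidity statement is where all the difficulty resides, and it is precisely the absence of a direct evaluation of the $\Rep(\VOA_\mathrm{ev})$ tensor product that forces the present formulation as a conjecture.
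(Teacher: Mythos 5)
There is a fundamental mismatch here: the statement you were asked to prove is stated in the paper \emph{as a conjecture}, and the paper offers no proof of it. The only things actually established are Corollary~\ref{cor:faith-fun_SF-RepVev} (faithfulness of $(\widehat{-})_\mathrm{ev}$), the existence of the finite ribbon structure on the source $\SF(\h)$ via \cite{Davydov:2012xg,Runkel:2012cf}, and, for $N=1$ only, a $\Cb$-linear (not ribbon) equivalence deduced from the quantum-group description of the $p=2$ triplet model \cite{Feigin:2005xs,Nagatomo:2009xp}; a more precise formulation of the conjecture is referred to \cite[Conj.\,7.4]{Davydov:2016euo}. Your text is accordingly a research programme rather than a proof, and to your credit you say so explicitly. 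Within that programme, your Morita-theoretic reduction of the abelian part to (i) $\Gc$ being a projective generator and (ii) surjectivity of $\End_{\SF}(G) \to \End_{\VOA_\mathrm{ev}}(\Gc)$ is sound as a reduction, and your count $\dim_\Cb \Ec = 2^{2N+1}+2$ from \eqref{eq:E-EndVG-abbrev} is correct; but note that in the paper the statements $\mathcal{E} = \End_{\VOA_\mathrm{ev}}(\Gc)$ and projectivity of $\Gc$ are \emph{deduced from} Conjecture~\ref{conj:SF-RepV} (see the proof of the corollary following it), so (i) and (ii) are exactly the open content, and no method for the ``conformal-block bound'' you invoke is currently available in the logarithmic setting.

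The genuine flaws are in your second stage. First, your ``structural alternative'' is circular: it invokes factorisability of $\Rep(\VOA_\mathrm{ev})$, which is Conjecture~\ref{conj:fact} and hence part of what is unproven; worse, the rigidity principle it relies on is false in general --- the underlying abelian category together with fusion rules and twist eigenvalues does \emph{not} determine a braided or ribbon structure up to equivalence (already among semisimple modular tensor categories there are inequivalent examples, e.g.\ Galois conjugates, with identical Grothendieck rings, and a given finite tensor category can admit inequivalent braidings). Second, even the direct route understates the difficulty: matching the twist $e^{2\pi i L_0}$ presupposes rigidity of $\Rep(\VOA_\mathrm{ev})$, since without duals a balancing is not yet a ribbon structure, and rigidity is open (this is precisely \cite[Conj.\,4.2]{Huang:2009xq} as cited before Conjecture~\ref{conj:fact}); moreover the hexagon and associativity constraints do not ``follow from associativity of the fermionic OPEs'' --- constructing the associativity and braiding isomorphisms for the $P(z)$-tensor products of \cite{Huang:2010} and verifying the coherence axioms is the hard analytic content, not a formal consequence. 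So your proposal correctly locates where the difficulty lives, but neither of its two routes closes, and the second route as formulated would fail even granting the first stage; the paper's own treatment instead keeps the statement conjectural and tests its consequences (Remark~\ref{rem:SF-example}, the Grothendieck-ring computation \eqref{eq:Gr(SF)-product}, and the $N=1$ evidence) rather than attempting either of your arguments.
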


For $N=1$, the structure of $\Rep(\VOA_\mathrm{ev})$ as a $\Cb$-linear category follows as the $p=2$ case of the relation between $W_{1,p}$-triplet models and a certain quantum 
group\footnote{The quantum group in question is the centraliser for a certain (non-minimum) choice of a projective generator in $\Rep( \VOA_\mathrm{ev})$. }
\cite{Feigin:2005xs,Nagatomo:2009xp}.
From this one can see that for $N=1$, the functor $(\widehat{-})_\mathrm{ev}$ is at least a $\Cb$-linear equivalence.

We can now apply Theorem~\ref{thm:main}:

\begin{corollary}
Under Conjectures~\ref{conj:SF-ZE=C1V} and \ref{conj:SF-RepV}, the expressions in 
\eqref{eq:Gr(SF)-product} describe the product in $\Gr(\Rep(\VOA_\mathrm{ev}))$.
\end{corollary}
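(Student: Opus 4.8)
The plan is to deduce the claim directly from Theorem~\ref{thm:main}, which states that the constants obtained via steps 1--4 are the structure constants of $\Gr(\Rep(\VOA_\mathrm{ev}))$ as soon as $\VOA_\mathrm{ev}$, together with the projective generator $\Gc$, satisfies Conjectures~\ref{conj:fact}, \ref{conj:C(End)-C1(V)-iso} and~\ref{conj:S-agrees}. All the data these steps require for symplectic fermions -- the generator $\Gc$ with its endomorphism algebra $\Ec$, the central forms $\varphi_U$, the form $\delta$, and the transported action $S_Z$ that produces the products \eqref{eq:Gr(SF)-product} -- have already been assembled above, so the proof reduces entirely to verifying those three conjectures under Conjectures~\ref{conj:SF-ZE=C1V} and~\ref{conj:SF-RepV}.

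Two of the three are immediate. Conditions 1--4 of Section~\ref{sec:voa} hold for $\VOA_\mathrm{ev}$ by \cite[Thm.\,3.12\,\&\,4.2]{Abe:2005}. Conjecture~\ref{conj:fact} follows from Conjecture~\ref{conj:SF-RepV}: the latter identifies $\Rep(\VOA_\mathrm{ev})$ with $\SF(\h)$ as ribbon categories, and $\SF(\h)$ is a finite ribbon category that is factorisable by Theorem~\ref{thm:factequiv} together with \cite[Prop.\,5.3]{Davydov:2012xg}; rigidity, pivotality and factorisability all transport along a ribbon equivalence. Conjecture~\ref{conj:C(End)-C1(V)-iso} for $\Gc$ is precisely the assertion that $\xi_\Gc : C(\Ec) \to C_1(\VOA_\mathrm{ev})$ is an isomorphism, i.e.\ the injectivity of Lemma~\ref{lem:xiG-inj} upgraded to bijectivity by Conjecture~\ref{conj:SF-ZE=C1V}.

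The decisive point, and where I expect the main obstacle, is Conjecture~\ref{conj:S-agrees}. Its first part, that $\delta = S_{\VOA_\mathrm{ev}}(\varphi_\one)$ induces a non-degenerate pairing on $\Ec$, is already settled by the computation in step~2, where $\delta$ is found to have the same kernel as the Frobenius form $\eps$. For the second part one must exhibit a sign of the integral $\Lambda_L$ for which $\hat\delta$ both intertwines the categorical and the modular $S$-transformations, diagram~\eqref{eq:S-compatible}, and sends each categorical $\tildechi_U$ of \eqref{eq:tildechi-def} to $\varphi_U$. I would establish this by computing the categorical data directly in $\SF(\h)$ from its known ribbon structure \cite{Davydov:2012xg,Runkel:2012cf}: under the identification $\End(\Id_{\SF(\h)}) \cong Z(\Ec)$ of point~4 of Remark~\ref{rem:ver}, one evaluates the coend $L$, an integral $\Lambda_L$, the internal characters $\chi_U$ and the maps $\psi,\Rad,\Omega$, and hence $\tildechi_U = \psi^{-1}(\Rad^{-1}(\chi_U))$ and $S_\Cc$; after fixing the sign of $\Lambda_L$ so that $S_\Cc(\tildechi_\one)=\id$ matches $\delta$, one checks that these reproduce exactly the elements $\tildechi_U$ and the map $S_Z$ recorded in Corollary~\ref{cor:SF-irr-and-S}. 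Concretely this amounts to showing that the topological $SL(2,\Zb)$-action on the torus space of the $\SF(\h)$ modular functor agrees, on the span of the internal characters and after transport by $\hat\delta$, with the analytic $S$-transformation $\tau\mapsto-1/\tau$ of the pseudo-trace functions computed in Appendix~\ref{app:Scalc}; the hardest part is matching the normalisation constants $(2\pi)^{\pm N}$ and $2^{\pm N}$ on the two sides.

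With all three conjectures in force, Theorem~\ref{thm:main} applies, so the constants $N_{AB}^{~C}$ determined by \eqref{eq:NABC-def-SF} are the structure constants of $\Gr(\Rep(\VOA_\mathrm{ev}))$. Since these are exactly the constants recorded in \eqref{eq:Gr(SF)-product}, the displayed products describe the Grothendieck-ring multiplication, as claimed.
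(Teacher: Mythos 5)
Your proposal is correct and follows the same skeleton as the paper's proof: verify the three hypotheses of Theorem~\ref{thm:main} under Conjectures~\ref{conj:SF-ZE=C1V} and~\ref{conj:SF-RepV}, then read off \eqref{eq:Gr(SF)-product} from \eqref{eq:NABC-def-SF}. Two refinements are worth noting. First, your claim that Conjecture~\ref{conj:C(End)-C1(V)-iso} ``is precisely'' Conjecture~\ref{conj:SF-ZE=C1V} skips a step the paper makes explicit: a priori $\Ec$ is only the image of $\End_{\SF}(G)$ under the faithful functor $(\widehat{-})_\mathrm{ev}$, hence a \emph{subalgebra} of $\End_{\VOA_\mathrm{ev}}(\Gc)$, and $\Gc$ is not yet known to be a projective generator of $\Rep(\VOA_\mathrm{ev})$. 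One must first invoke Conjecture~\ref{conj:SF-RepV} (fullness of the equivalence) to conclude that $\Gc$ is a projective generator and that $\Ec = \End_{\VOA_\mathrm{ev}}(\Gc)$; only then does Conjecture~\ref{conj:SF-ZE=C1V} become the statement of Conjecture~\ref{conj:C(End)-C1(V)-iso}. Since you have Conjecture~\ref{conj:SF-RepV} in hand, this is easily repaired, but as written the identification is imprecise. Second, you correctly locate the crux in part 2 of Conjecture~\ref{conj:S-agrees}, and the computation you outline -- evaluating $L$, $\Lambda_L$, the internal characters and $\psi$, $\Rad$, $\Omega$ directly in $\SF(\h)$ and matching against Corollary~\ref{cor:SF-irr-and-S}, including the $(2\pi)^{\pm N}$ and $2^{\pm N}$ normalisations -- is exactly what is carried out in \cite{Gainutdinov:2015lja,FGR1,FGR2}, there by realising $\SF(\h)$ as modules over a factorisable ribbon quasi-Hopf algebra $\mathsf{Q}(N)$; the paper does not perform this calculation itself but cites \cite[Sec.\,6]{FGR2}. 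So your proof leaves this step as a plan rather than a discharged verification, which puts it on the same footing as the paper's citation; in a self-contained write-up you would have to either execute that (long) computation or cite it.
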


\begin{proof}
We need to verify the three conjectures assumed in Theorem~\ref{thm:main}. \
Conjecture~\ref{conj:fact} is clear from Conjecture~\ref{conj:SF-RepV} 
as  $\SF(\h)$ is factorisable.
 From the latter we furthermore conclude that $\Gc$ is projective and that $\mathcal{E} = \End_{\VOA_\mathrm{ev}}(\Gc)$, rather than just being a subalgebra.
Conjecture~\ref{conj:C(End)-C1(V)-iso} now follows from  Conjecture~\ref{conj:SF-ZE=C1V}. 

As for Conjecture~\ref{conj:S-agrees}, part 1 holds as $\delta$ was seen explicitly to be non-degenerate above. 
Part 2 follows from a longer calculation carried out in~\cite{Gainutdinov:2015lja,FGR1,FGR2}. 
There, the $SL(2,\Zb)$-action on $\End(Id_{\SF})$ is computed by realising $\SF(\h)$ as representations of a factorisable ribbon quasi-Hopf algebra 
$\mathsf{Q}(N)$ ($N = \dim\h/2$), 
see \cite{Gainutdinov:2015lja,FGR2}. It is shown in \cite[Sec.\,6]{FGR2} that $\hat\delta$ makes the diagram \eqref{eq:S-compatible} commute and maps $\phi_U$ to $\varphi_U$, as required.
\end{proof}

Of course, given Conjecture~\ref{conj:SF-RepV} one can just use the explicit tensor product on $\SF(\h)$ to check that \eqref{eq:Gr(SF)-product} gives the product on $\Gr(\SF(\h)) \cong \Gr(\Rep(\VOA_\mathrm{ev}))$.
The main contents of the above corollary is thus the result of \cite{FGR2} that the modular and the categorical $S$-transformation agree, i.e.\ that \eqref{eq:S-compatible} commutes.

\begin{remark}
\label{rem:SF-example}
In \cite[Thm.\,5.7]{Abe:2011ab}, the dimensions 
$\Hom_{\VOA_\mathrm{ev}}(A \otimes B,C)$ are given for $A,B,C$ chosen from the four irreducible $\VOA_\mathrm{ev}$-modules by computing the dimension of the space of intertwining operators.\footnote{In \cite[Rem.\,5.8]{Abe:2011ab} fusion rules are described, but these are not the fusion rules of the Grothendieck ring.}
If we label the four irreducibles by elements of $\Zb_2 \times \Zb_2$ as
\be\label{eq:SF-irred-Z2Z2}
	M_{(0,0)} = (\widehat\one)_\mathrm{ev}
	~~,~~
	M_{(1,0)} = (\widehat{\Pi\one})_\mathrm{ev}
	~~,~~
	M_{(0,1)} = (\widehat T)_\mathrm{ev}
	~~,~~
	M_{(1,1)} = (\widehat{\Pi T})_\mathrm{ev} \ ,
\ee
then the result is that $\Hom_{\VOA_\mathrm{ev}}(M_g \otimes M_h, M_k)$ is one-dimensional if $g+h=k$ and zero otherwise.

If we {\em assume} that $\Rep(\VOA_\mathrm{ev})$ is rigid, that $(\widehat{\Pi\one})_\mathrm{ev}$ is $\ot$-invertible
and that $(\widehat T)_\mathrm{ev}$ and $(\widehat{\Pi T})_\mathrm{ev}$ are projective 
the result of \cite{Abe:2011ab} determines the tensor product of simple objects. 
	Indeed, any tensor product with $M_{(0,1)}$ and $M_{(1,1)}$ is then projective, too (see e.g.\ \cite[Prop.\,2.1]{Etingof:2003}). E.g.\ $M_{(1,0)} \ot M_{(0,1)} \cong \sum_{U \in \Irr} n_U P_U$ with $n_U = \dim\Hom_{\VOA_\mathrm{ev}}(M_{(1,0)} \ot M_{(0,1)},U)$ (as $P_U$ is the projective cover of $U$). In this way one finds
\be
(\widehat{\Pi\one})_\mathrm{ev} \ot (\widehat T)_\mathrm{ev} \cong (\widehat{\Pi T})_\mathrm{ev}
\quad , \quad
(\widehat T)_\mathrm{ev} \ot (\widehat T)_\mathrm{ev} \cong 
(\widehat{\Lambda(\h)})_\mathrm{ev} \ ,
\ee
which produces \eqref{eq:Gr(SF)-product} on the Grothendieck ring,
and also agrees with Conjecture~\ref{conj:SF-RepV}.
\end{remark}

\appendix

\section[Appendix: Symplectic fermion pseudo-trace functions]{Appendix:\\Symplectic fermion pseudo-trace functions}\label{app:Scalc}

\noindent
{\bf Virasoro action:} Let $\alpha^1,\dots,\alpha^{2N}$ be a symplectic basis of $\h$ as in Section \ref{sec:sf}. 
The action of the Virasoro generators on $M \in 
\Rep^\mathrm{fin}_{\flat,1}(\widehat\h_\mathrm{(tw)})$ is given by
(see e.g.\ \cite[Rem.\,2.5\,\&\,2.7]{Runkel:2012cf}),
\be
	\text{for $m \neq 0$} ~:\quad
	L_m = \sum_{k \in \Zb+\delta} \sum_{j=1}^N \alpha^{2j}_k \alpha^{2j-1}_{m-k} \ ,
\ee
where $\delta=0$ for $M$ untwisted and $\delta = \frac12$ for $M$ twisted. 
Before we give $L_0$, we introduce some new notation needed later. Let
$v \in \VOA_\mathrm{ev}$ be of the form
\be\label{eq:v-minus1-modes}
	v = h^1_{-1} \cdots h^{R}_{-1} \one
	\quad , \quad h^j \in \h ~~,~~R \ge 0 \ .
\ee
For such $v$ we define the endomorphism $J(v)$ of $M$ (untwisted case) and $J_\mathrm{tw}(v)$ (twisted case) as
\be
R=0 ~:~ J_\mathrm{(tw)}(\one) = \id
\quad , \quad
R>0 ~:~ 	J_\mathrm{(tw)}(v) ~= 
\hspace{-2em}
\sum_{m_1,\dots,m_R \in \Zb + \delta \,,~\sum_j m_j=0} 
\hspace{-2em}
: h^1_{m_1} \cdots h^R_{m_R} : \ ,
\ee
where, as before, $\delta=0$ for $M \in \Rep^\mathrm{fin}_{\flat,1}(\widehat\h)$ and $\delta=\frac12$ for $M \in \Rep^\mathrm{fin}_{\flat,1}(\widehat\h_\mathrm{tw})$. We will abbreviate, for $j = 1,\dots,N$,
\be\label{eq:gamma-tilde}
	\gamma_j = \alpha^{2j} \alpha^{2j-1} \in \Lambda(\h)
	\quad , \quad
	\tilde\gamma_j = \alpha^{2j}_{-1} \alpha^{2j-1}_{-1} \in U(\widehat \h) \ .
\ee
Then
\be\label{eq:L0-untw-tw}
\text{untwisted :} ~~
L_0 = \sum_{j=1}^N J(\tilde\gamma_j\one)
\quad , \qquad
\text{twisted :}~~
L_0 = - \tfrac{N}8+\sum_{j=1}^N J_\mathrm{tw}(\tilde\gamma_j\one)  \ .
\ee

\medskip

\noindent
{\bf Zhu's grading:} For the homogeneous transformation property of torus one-point functions in \eqref{eq:modxfer}, 
we need to insert homogeneous elements with respect to Zhu's grading. This grading arises from transforming the canonical local coordinates on the torus to the annulus, see \cite[Sect.\,4.2]{Zhu1996}. 
The resulting grading $\VOA = \bigoplus_{n \in \Zb_{\ge 0}} \VOA_{[n]}$ is by eigenspaces of the operator $L[0]$ given by
\be
	L[0] = L_0 + \tfrac12 L_1 - \tfrac16 L_2 + \tfrac1{12} L_3 + \text{(higher $L_m$'s)} \ .
\ee
On vectors of the form $v$ as in \eqref{eq:v-minus1-modes}, all $L_m$ with $m \ge 0$, except $L_0$ and $L_2$, act as zero. One checks:
\be
	L[0] \, e^{-\tfrac1{12}L_2} \,v ~=~ e^{-\tfrac1{12}L_2} \,L_0 \, v 
	~=~ R \, e^{-\tfrac1{12}L_2} \,v \ .
\ee
(Replacing $v$ by $\exp(-\tfrac1{12}L_2)v$ compensates the coordinate change from $z$ to $e^z-1$, but we will not need this fact.) Thus
\be\label{eq:v-homog}
	e^{-\tfrac1{12}L_2} \,v ~ \in ~ (\VOA_\mathrm{ev})_{[R]} \ .
\ee

\medskip

\noindent
{\bf Untwisted and twisted zero modes:}
We will compute the pseudo-trace functions for insertions of homogeneous vectors of degree $R$ of the form \eqref{eq:v-homog}. As in \cite[Sect.\,3.1]{Abe:2005}, one finds in the untwisted case: 
\be\label{eq:oJ}
	\text{for $M \in \Rep^\mathrm{fin}_{\flat,1}(\widehat\h)$}~:\quad
	o(e^{-\tfrac1{12}L_2} \,v) = J(e^{-\tfrac1{12}L_2} \,v) \ .
\ee
(Note that $L_2^k v$ is of the form~\eqref{eq:v-minus1-modes} and thus~\eqref{eq:oJ} is well-defined.) For the twisted case, we need to account for the transformation $\Delta(z)$ in \cite[Sect.\,4.1]{Abe:2005}. The relevant term is $\Delta(z) = \tfrac18 \sum_{k=1}^N \alpha^{2k}_1 \alpha^{2k-1}_1 z^{-2} + \text{(other terms)}$. Then, from \cite[Sect.\,4.1]{Abe:2005},
\be
	\text{for $M \in \Rep^\mathrm{fin}_{\flat,1}(\widehat\h_\mathrm{tw})$}
	~:\quad
	o(e^{-\tfrac1{12}L_2} \,v) = 
	J_{\mathrm{tw}}(e^{(\tfrac18-\tfrac1{12})L_2} \,v) \ .
\ee

\medskip

\noindent
{\bf Pseudo-trace functions for untwisted modules:} To avoid spelling out $e^{-\tfrac1{12}L_2}$ in every argument, we set
\be
	\zeta^\varphi_\Gc(v,\tau) ~:=~ \xi^\varphi_\Gc\big(e^{-\tfrac1{12}L_2} \,v,\tau\big) \ .
\ee
We start by evaluating  the first summand in \eqref{eq:pstr-for-SF} for an insertion of the form \eqref{eq:v-homog}:
\be
	\zeta^\varphi_0(v,\tau) ~:=~
	\mathrm{tr}_{U(\widehat\h_{<0})}\big(\, \big\{(\varphi|_{\Ec_0} \ot \id) \circ \iota_0\big\} \, o(e^{-\tfrac1{12}L_2} \,v) \, e^{2 \pi i \tau (L_0 + N/12)} \circ  (\id_{U(\widehat\h_{<0})}\otimes \oneL) \,\big) \ .
\ee
	Since central forms $\varphi$ on $\Ec_0$ are non-vanishing only in even
 degree\footnote{
Recall that with the central form $\eps$ from~\eqref{eq:SF-eps-norm}, $\Ec_0$ is  a symmetric Frobenius algebra. The form $\eps$ is non-vanishing only in top degree, which is even.
By Lemma \ref{lem:ZA-CA-iso}, all other central forms are of the form $\eps(z \cdot -)$, where $z$ is a central element of $\Ec_0$. But all elements of the centre $Z(\Ec_0)$ are of even degree, see~\eqref{eq:centerE}. Hence all central forms vanish in odd degrees.},
we have, for $u \ot \lambda \in \widehat{\Lambda(\h)} = U(\widehat\h_{<0}) \ot \Lambda(\h)$ (recall \eqref{eq:pi-iota-for-SF})
\be
\big((\varphi|_{\Ec_0} \ot \id) \circ \iota_0\big)(u \ot \lambda)
~=~ \tfrac12 \varphi|_{\Ec_0}(\lambda) \,u + \tfrac12 \varphi|_{\Ec_0}(\K\lambda) \, \omega_{U(\widehat\h_{<0})}(u) \ .
\ee

Consider $w \in \VOA_\mathrm{ev}$ of the form
\be\label{eq:w-insertion-pt}
	w = \tilde\gamma_{l_1} \cdots \tilde\gamma_{l_r} \alpha^{j_1}_{-1} \cdots \alpha^{j_s}_{-1} \one ~\in \VOA_\mathrm{ev} \ ,
\ee
where $\tilde\gamma_{l}$, for  $l = 1,\dots,N$, were introduced in~\eqref{eq:gamma-tilde} and
 $\{j_1,\dots,j_s\}$ contains no pair $\{2l-1,2l\}$ for $l=1,\dots,N$. 
One can convince oneself that under the trace one can replace
\be\label{eq:under-tr}
	J(w) ~\leadsto~
	J(\tilde\gamma_{l_1}\one) \cdots J(\tilde\gamma_{l_r}\one) \, \alpha^{j_1}_{0} \cdots \alpha^{j_s}_{0} \ .
\ee
Furthermore, one verifies that
\be\label{eq:elamL2}
	e^{\lambda L_2} \, w ~=~
	(\tilde\gamma_{l_1}-\lambda) \cdots (\tilde\gamma_{l_r}-\lambda)\, \alpha^{j_1}_{-1} \cdots \alpha^{j_s}_{-1} \one \ .
\ee
Combining this with \eqref{eq:L0-untw-tw} in the form $L_0 + \tfrac{N}{12} = \sum_{j=1}^N J((\tilde\gamma_j+\tfrac1{12})\one)$ we get
\begin{align}
	\zeta^\varphi_0(w,\tau) ~=~& (2 \pi i)^{-r} \tfrac{\partial}{\partial\tau_{l_1}} \cdots \tfrac{\partial}{\partial\tau_{l_r}}
	 \mathrm{tr}_{U(\widehat\h_{<0})}\big(\, \big\{(\varphi|_{\Ec_0} \ot \id) \circ \iota_0\big\} \, \alpha^{j_1}_{0} \cdots \alpha^{j_s}_{0}
	\nonumber \\
&	\qquad \times e^{2 \pi i \sum_{j=1}^N  J((\tilde\gamma_j+\tfrac1{12})\one)\tau_j } \circ  (\id_{U(\widehat\h_{<0})}\otimes \oneL)\,\big)\big|_{\tau_j=\tau} \ .
\end{align}
Next we note that $\sum_{j=1}^N J((\tilde\gamma_j+\tfrac1{12})\one)$ acts on 
$U(\widehat\h_{<0}) \ot \Lambda(\h)$ as
\be
	\sum_{j=1}^NJ((\tilde\gamma_j+\tfrac1{12})\one) 
	~=~ \sum_{j=1}^N \id \ot \gamma_j \,+\, (D+\tfrac1{12}) \ot \id \ ,
\ee
where $D$ is the $\Zb_{\ge 0}$-valued 
	(negative of the)
degree in $U(\widehat\h_{<0})$.
Define $\widehat\h^{N=1}$ to be the mode algebra for a single pair of symplectic fermions. We will need the characters \cite{Kausch:1995py,Gaberdiel:1996np,Abe:2005} (as usual, $q=e^{2 \pi i \tau}$)
\be\label{eq:ch-N=1-ns}
\chi^{N=1}_{ns,\pm}(\tau) = \mathrm{tr}_{U(\widehat\h^{N=1}_{<0})}\big(\, P_\pm \,
e^{2 \pi i \tau (D + 1/12)} \,\big)	
= \Big( q^{\frac1{24}} \prod_{n=1}^\infty (1\pm q^n) \Big)^2 \ ,
\ee
where $P_+ = \id$ while $P_- = \omega$ is the parity involution, and where $D$ denotes the $\Zb_{\ge 0}$-valued degree on $U(\widehat\h^{N=1}_{<0})$.

Combining all this, we arrive at the final expression
\begin{align}
\zeta^\varphi_0(w,\tau) ~=~ &
\tfrac12 \,(2 \pi i)^{-r}
\tfrac{\partial}{\partial\tau_{l_1}} \cdots \tfrac{\partial}{\partial\tau_{l_r}}
\Big\{ 
\varphi|_{\Ec_0}\big(\alpha^{j_1} \cdots \alpha^{j_s} 
\, e^{2 \pi i \sum_{j=1}^N \gamma_j \tau_j} \big)\,
\prod_{j=1}^N \chi^{N=1}_{ns,+}(\tau_j)
\nonumber\\
& \qquad +~ 
\varphi|_{\Ec_0}\big(\K\alpha^{j_1} \cdots \alpha^{j_s} \, e^{2 \pi i \sum_{j=1}^N \gamma_j \tau_j}\big)\,
\prod_{j=1}^N \chi^{N=1}_{ns,-}(\tau_j)
 \Big\}\Big|_{\tau_j = \tau} \ ,
 \label{eq:xi0-final}
\end{align}
where we used that $U(\widehat\h_{<0}) \cong \bigotimes_{j=1}^{N} U(\widehat\h^{N=1}_{<0})$.

\medskip

\noindent
{\bf Pseudo-trace functions for twisted modules:}
Next we compute the second summand in \eqref{eq:pstr-for-SF} for an insertion of the form $e^{-L_2/12} \,w$ with $w$ as in \eqref{eq:w-insertion-pt}:
\be
	\zeta^\varphi_1(w,\tau) ~:=~
 \mathrm{tr}_{U(\widehat\h_{\mathrm{tw},<0})}\big(\, \big\{(\varphi|_{\Ec_1} \ot \id) \circ \iota_1\big\} \, o(e^{-\tfrac1{12}L_2} \,w) \, e^{2 \pi i \tau (L_0 + N/12)} \,\big) \ .
\ee
The calculation is similar to the untwisted case, but easier. One has to use \eqref{eq:elamL2} for $\lambda = \tfrac18-\tfrac1{12}$ and \eqref{eq:L0-untw-tw} in the form $L_0 + \tfrac{N}{12} = \sum_{j=1}^N J_\mathrm{tw}((\tilde\gamma_j-\tfrac18+\tfrac1{12})\one)$.
Note that the analogue of the statement in~\eqref{eq:under-tr} is
$	J_{\mathrm{tw}}(w) \leadsto
	\delta_{s,0}J_{\mathrm{tw}}(\tilde\gamma_{l_1}\one) \cdots J_{\mathrm{tw}}(\tilde\gamma_{l_r}\one)$
	and that $\sum_{j=1}^N J_{\mathrm{tw}}(\tilde\gamma_{j}\one)$ acts as $D$ on $\widehat{T}$.
 We need the characters \cite{Kausch:1995py,Gaberdiel:1996np,Abe:2005}, for $P_\pm$ as in \eqref{eq:ch-N=1-ns},
\be\label{eq:ch-N=1-r}
\chi^{N=1}_{r,\pm}(\tau)
 = \mathrm{tr}_{U(\widehat\h^{N=1}_{\mathrm{tw},<0})}\big(\, P_\pm \,
e^{2 \pi i \tau (D 
	- \frac18
+ \frac1{12})} \,\big)	
 =  \Big( q^{-\frac1{48}} \prod_{n=1}^\infty (1\pm q^{n-\frac12}) \Big)^2 \ .
\ee
In this way, one finally obtains
\begin{align}
\zeta^\varphi_1(w,\tau) ~=~ &
\tfrac12 \, \delta_{s,0} \,(2 \pi i)^{-r}
\tfrac{\partial}{\partial\tau_{l_1}} \cdots \tfrac{\partial}{\partial\tau_{l_r}}
\Big\{ 
\varphi|_{\Ec_1}(e_T)\,
\prod_{j=1}^N \chi^{N=1}_{r,+}(\tau_j)
\nonumber\\
& \qquad +~ 
\varphi|_{\Ec_1}(\K e_T)\,
\prod_{j=1}^N \chi^{N=1}_{r,-}(\tau_j)
 \Big\}\Big|_{\tau_j = \tau} \ .
 \label{eq:xi1-final}
\end{align}

It is shown in \cite[Thm.\,6.3.2]{Arike:2011ab} that the map $C(\Ec_0) \to C_1(\VOA)$, 
$\varphi \mapsto \zeta^\varphi_0$ is injective. The explicit calculation above allows one to make a slightly sharper statement.

\begin{lemma}\label{lem:xi-inj-minus1-modes}
The subspace of $\VOA_\mathrm{ev}$ spanned by all $w$ of the form \eqref{eq:w-insertion-pt} is sufficient to separate all $\zeta^\varphi_\Gc$. That is, if $\zeta^\varphi_\Gc(w,\tau)=0$ for all $w$ of the form \eqref{eq:w-insertion-pt} and all $\tau \in \mathbb{H}$, then $\varphi=0$.
\end{lemma}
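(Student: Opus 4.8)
The plan is to treat the explicit formulas \eqref{eq:xi0-final} and \eqref{eq:xi1-final} as encoding the values of $\varphi$ on a spanning set of $\Ec$, and to invert this encoding. Concretely, $\zeta^\varphi_\Gc=\zeta^\varphi_0+\zeta^\varphi_1$ depends on $\varphi$ only through the numbers $\varphi|_{\Ec_0}(\K^\epsilon\alpha^{j_1}\cdots\alpha^{j_s}\gamma_S)$ and $\varphi|_{\Ec_1}(\K^\epsilon e_T)$, with $\epsilon\in\{0,1\}$ and $S\subseteq\{1,\dots,N\}$. As $J=\{j_1,\dots,j_s\}$ (necessarily containing no pair $\{2l-1,2l\}$) and $S$ vary, the elements $\alpha^J\gamma_S=\pm\alpha^{J\sqcup\mathrm{pairs}(S)}$ run through a spanning set of $\Lambda(\h)$, so the $\K^\epsilon\alpha^J\gamma_S$ span $\Ec_0$ and the $\K^\epsilon e_T$ span $\Ec_1$. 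Since a linear functional is determined by its values on a spanning set, it suffices to show that $\zeta^\varphi_\Gc\equiv 0$ forces every one of these numbers to vanish.

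The heart of the argument is the recovery of the individual coefficients $\varphi|_{\Ec_0}(\K^\epsilon\alpha^J\gamma_S)$ for fixed $J$, using the freedom to insert descendant modes. Writing $w=\tilde\gamma_{l_1}\cdots\tilde\gamma_{l_r}\alpha^{j_1}_{-1}\cdots\alpha^{j_s}_{-1}\one$, the relation $\tilde\gamma_l^2=0$ forces $L:=\{l_1,\dots,l_r\}$ to be a subset of the pairs not met by $J$, and by \eqref{eq:xi0-final} inserting these modes amounts to applying $\partial_{\tau_{l_1}}\cdots\partial_{\tau_{l_r}}$ before setting all $\tau_j=\tau$. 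Since $\gamma_j^2=0$, each pair $j$ enters a fixed character block of the summand only through the two functions $\chi(\tau_j)$ and $\tau_j\chi(\tau_j)$ (according to whether $j\in S$), where $\chi$ is one of $\chi^{N=1}_{ns,\pm}$; a single derivative $\partial_{\tau_j}$ preserves this two-dimensional space with transfer matrix
\[
  M \;=\; \begin{pmatrix} \chi & \chi' \\ \tau\chi & \chi+\tau\chi' \end{pmatrix},
  \qquad
  \det M \;=\; \chi^2 .
\]
Because $\chi^2\not\equiv 0$, the Kronecker power $M^{\ot m}$, with $m$ the number of pairs not met by $J$, is invertible over the field of meromorphic functions; hence the vanishing, for every subset $L$, of $\partial_{\tau_{l_1}}\cdots\partial_{\tau_{l_r}}(\,\cdot\,)|_{\tau_j=\tau}$ of that block forces all its (constant) coefficients to vanish. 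This is precisely the mechanism — unavailable for $v=\one$ alone — by which the descendant insertions $\tilde\gamma_l$ separate the contributions of the different $\gamma_S$, while the insertions $\alpha^j_{-1}$ probe the different singleton monomials $\alpha^J$.

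To apply the determinant argument blockwise I must disentangle the characters. For $J\neq\emptyset$ (that is $s>0$) the factor $\delta_{s,0}$ in \eqref{eq:xi1-final} removes the twisted part, so these sectors are purely untwisted; the blocks $\chi^{N=1}_{ns,+}$ and $\chi^{N=1}_{ns,-}$ are separated because they are linearly independent power series, and the determinant argument then recovers all $\varphi|_{\Ec_0}(\K^\epsilon\alpha^J\gamma_S)$ with $J\neq\emptyset$. In the remaining sector $J=\emptyset$ the untwisted and twisted contributions coexist; here I would use the $q$-expansion ($q=e^{2\pi i\tau}$): the twisted characters carry the leading exponent $q^{-N/24}$, strictly below the untwisted $q^{N/12}$, so the lowest $q$-orders are purely twisted and, via the sign difference between $\chi^{N=1}_{r,+}$ and $\chi^{N=1}_{r,-}$, fix $\varphi|_{\Ec_1}(e_T)$ and $\varphi|_{\Ec_1}(\K e_T)$; subtracting these and separating $\chi^{N=1}_{ns,\pm}$ as before kills the last coefficients, whence $\varphi=0$.

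The main obstacle is this final disentangling step: one must prove that the entire finite family of functions of $\tau$ attached to the distinct unknowns is linearly independent. The identity $\det M=\chi^2$ cleanly separates the $\gamma_S$-contributions inside a single character block, but the bookkeeping that simultaneously peels apart the four characters $\chi^{N=1}_{ns,\pm}$ and $\chi^{N=1}_{r,\pm}$ — most delicately in the sector $J=\emptyset$, where untwisted and twisted contributions coexist and their $q$-supports can overlap when $4\mid N$ — requires isolating each unknown by a $q$-order (or derivative order) at which only that unknown contributes. Carrying this out uniformly in $N$ is the technical crux; everything else is the routine reduction and the determinant computation above.
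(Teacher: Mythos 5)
Your skeleton is the same as the paper's -- feed insertions $w$ of the form \eqref{eq:w-insertion-pt} into the explicit formulas \eqref{eq:xi0-final}, \eqref{eq:xi1-final} and recover the finitely many numbers $\varphi|_{\Ec_0}(\K^\nu\alpha^J\gamma_S)$ and $\varphi|_{\Ec_1}(\K^\nu e_T)$ -- and your transfer-matrix observation, with $\det M=\chi^2$ making $M^{\otimes m}$ invertible over meromorphic functions, is a correct and rather elegant device for separating the $\gamma_S$ inside a \emph{single} character block. But your write-up stops exactly at the step that carries the content of the lemma: the joint linear independence, as functions of the single variable $\tau$, of the whole family of blocks (products of $\chi^{N=1}_{ns,\pm}$, $\chi^{N=1}_{r,\pm}$, their $\tau$-derivatives, and powers of $\tau$). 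Your one-line justification for $s>0$, that the $+$ and $-$ blocks ``are separated because they are linearly independent power series'', is not a proof: independence of the two characters does not give independence over constants of the $2^{m+1}$ columns of $\bigl(M_+^{\otimes m}\,\big|\,M_-^{\otimes m}\bigr)$, and since $\chi^{N=1}_{ns,+}$ and $\chi^{N=1}_{ns,-}$ have the \emph{same} leading exponent $q^{1/12}$, no leading-order argument splits them. Your final paragraph then openly declares this bookkeeping (including the $4\mid N$ overlap of twisted and untwisted $q$-supports, which you correctly identify) to be unfinished. Since the lemma \emph{is} precisely such a separation statement -- it fails if one only inserts $v=\one$, cf.\ \cite[Cor.\,6.3.3]{Arike:2011ab} -- a proposal that labels this step ``the technical crux'' and defers it has not proved the statement.

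For comparison, the paper's proof both needs less and leaves less open at this point. It records one explicit independence family, $\tau^a(\partial_\tau\chi^{N=1}_{ns,-})^b(\chi^{N=1}_{ns,-})^{N-b}$, first disposes of $\varphi|_{\Ec_1}$ via \eqref{eq:xi1-final}, and then in \eqref{eq:xi0-final} keeps only the terms in which no $\tau_j$-derivative hits a character, which (using $e^{2\pi i\gamma_j\tau_j}=1+2\pi i\gamma_j\tau_j$) yields the polynomial identity
\begin{equation*}
\varphi|_{\Ec_0}\Big(\K^\nu\,\alpha^{j_1}\cdots\alpha^{j_s}\prod_{j\in L}(2\pi i\gamma_j)\prod_{j\notin L}(1+2\pi i\gamma_j\tau)\Big)=0
\qquad\text{for all }\tau\in\mathbb{H}\ .
\end{equation*}
Here no matrix inversion is needed at all: for each fixed $L$ the coefficient of $\tau^0$ is $(2\pi i)^{|L|}\,\varphi|_{\Ec_0}(\K^\nu\alpha^J\gamma_L)$, so every unknown is read off individually. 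Thus your determinant trick, while valid, buys nothing at the crux -- once the blocks are separated, one value per choice of $L$ drops out for free. To close your gap along your own lines, make the separation concrete in two stages: first expand in powers of $\tau$ ($\partial_\tau$ of a character is again a pure $q$-series, so every entry is a finite sum $\sum_a \tau^a\cdot(q\text{-series})$, and the functions $\tau^a q^h$ for distinct pairs $(a,h)$ are linearly independent on $\mathbb{H}$), then compare leading $q$-exponents sector by sector in the resulting $q$-series relations; this is, in effect, what the paper's two asserted independence steps amount to.
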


\begin{proof}
Suppose  $\zeta^\varphi_\Gc(w,\tau)=0$ for all $w$ of the form \eqref{eq:w-insertion-pt} and all $\tau \in \mathbb{H}$.

Note that the functions $\tau^a \,(\frac{\partial}{\partial \tau}\chi^{N=1}_{ns,-}(\tau))^b\, \chi^{N=1}_{ns,-}(\tau)^{N-b}$, for $a \in \Zb_{\ge 0}$ and $b=0,1,\dots,N$, are linearly independent. 

From \eqref{eq:xi1-final} we see that $\varphi|_{\Ec_1}(e_T) = 0$ and
$\varphi|_{\Ec_1}(\K e_T) = 0$, and thus $\varphi|_{\Ec_1}=0$.
In \eqref{eq:xi0-final}, consider all terms where no derivative acts on the characters and use that $\exp(2 \pi i \gamma_j \tau_j) = 1 + 2 \pi i \gamma_j \tau_j$ to see, for $\nu \in \{0,1\}$,
\begin{align}
 \varphi|_{\Ec_0}\Big(\K^\nu \alpha^{j_1} \cdots \alpha^{j_s} 
 \hspace{-1em} \prod_{j\in\{l_1,\dots,l_r\}} \hspace{-.8em}
 (2 \pi i \gamma_j)
 \hspace{-.5em} \prod_{j\notin\{l_1,\dots,l_r\}}  \hspace{-.5em}(1 + 2 \pi i \gamma_j \tau) \Big)
= 0 \ .
\end{align}
As this holds for all $r,s$, $j_1,\dots,j_s$, $l_1,\dots,l_r$ and $\tau$, we conclude $\varphi|_{\Ec_0} = 0$.
\end{proof}

\medskip

\noindent
{\bf Modular $S$-transformation:}
The behaviour of the characters \eqref{eq:ch-N=1-ns} and \eqref{eq:ch-N=1-r} under $S$-transformation is \cite{Kausch:1995py,Gaberdiel:1996np,Abe:2005}
\begin{align}
\chi^{N=1}_{ns,+}(\tfrac{-1}\tau) &\,=\, \tfrac12 \,\chi^{N=1}_{r,-}(\tau) \ ,
&
\chi^{N=1}_{r,+}(\tfrac{-1}\tau) &\,=\, \chi^{N=1}_{r,+}(\tau) \ ,
\nonumber\\
\chi^{N=1}_{ns,-}(\tfrac{-1}\tau) &\,=\, -i \tau \,\chi^{N=1}_{ns,-}(\tau) \ ,
&
\chi^{N=1}_{r,-}(\tfrac{-1}\tau) &\,=\, 2 \, \chi^{N=1}_{ns,+}(\tau) \ .
\label{eq:N=1-Strans}
\end{align}

We use the isomorphism $Z(\Ec)\to C(\Ec)$ given by $\hat\eps(z) = \eps(z \cdot (-))$ with $\eps$ as in \eqref{eq:SF-eps-norm}. We split $Z(\Ec) = Z_\Lambda \oplus Z_P$ as in \eqref{eq:ZE-splitting-Lam_P}. The basis for $Z_P$ used in Lemma \ref{lem:tildeS-trans-SF} is
\be
	z_1 = \alpha^1 \cdots \alpha^{2N} \, \K
	~~,\quad
	z_2 = e_T \, (\K+1)
	~~,\quad
	z_3 = e_T \, (\K-1) \ .
\ee
Abbreviate $\zeta^{\hat\eps(z)}_\Gc =: \zeta^z_\Gc$. Then
\begin{align}
\zeta^{z_1}_\Gc(w,\tau)
~&=~ 
\tfrac12 \, \delta_{s,0} \,(2 \pi i)^{-r}
\tfrac{\partial}{\partial\tau_{l_1}} \cdots \tfrac{\partial}{\partial\tau_{l_r}}
\prod_{j=1}^N \chi^{N=1}_{ns,+}(\tau_j)
\Big|_{\tau_j = \tau} \ ,
\nonumber\\
\zeta^{z_2}_\Gc(w,\tau)
~&=~ 
\tfrac12 \, \delta_{s,0} \,(2 \pi i)^{-r}
\tfrac{\partial}{\partial\tau_{l_1}} \cdots \tfrac{\partial}{\partial\tau_{l_r}}
\Big\{ 
\prod_{j=1}^N \chi^{N=1}_{r,+}(\tau_j)
~+~ 
\prod_{j=1}^N \chi^{N=1}_{r,-}(\tau_j)
 \Big\}\Big|_{\tau_j = \tau} \ ,
\nonumber\\
\zeta^{z_3}_\Gc(w,\tau)
~&=~ 
\tfrac12 \, \delta_{s,0} \,(2 \pi i)^{-r}
\tfrac{\partial}{\partial\tau_{l_1}} \cdots \tfrac{\partial}{\partial\tau_{l_r}}
\Big\{ 
\prod_{j=1}^N \chi^{N=1}_{r,+}(\tau_j)
~-~ 
\prod_{j=1}^N \chi^{N=1}_{r,-}(\tau_j)
 \Big\}\Big|_{\tau_j = \tau} \ .
 \label{eq:pseudo-for-z123}
\end{align}
We note that $\zeta^{z_2}_\Gc(w,\tau)$  and $\zeta^{z_3}_\Gc(w,\tau)$ from~\eqref{eq:pseudo-for-z123} are equal to $\zeta^{\id^*}_{\widehat{T}_{\mathrm{ev}}}(w,\tau)$ and $\zeta^{\id^*}_{\widehat{T}_{\mathrm{odd}}}(w,\tau)$ correspondingly. So by definition~\eqref{varphiU}, we conclude that $\varphi_{T}=\eps(z_2 \cdot -)$ and $\varphi_{\Pi T}=\eps(z_3 \cdot -)$ which agrees with~\eqref{eq:varphiU-SF}, where we set $c_T = z_2$ and $c_{\Pi T}=z_3$.

Note that for $F(\tau) = \frac{\partial}{\partial \tau} f(\tau)$ we have $F(-1/\tau) = \tau^2 \frac{\partial}{\partial \tau}\big( f(-1/\tau)\big)$.
The $S$-transformation reads
\begin{align}
\tau^{-2r} \,\zeta^{z_1}_\Gc(w,\tfrac{-1}\tau)
~&=~   2^{-N-1} \,\big( \, \zeta^{z_2}_\Gc(w,\tau) - \zeta^{z_3}_\Gc(w,\tau) \, \big) \ ,
\nonumber\\
\tau^{-2r} \,\zeta^{z_2}_\Gc(w,\tfrac{-1}\tau)
~&=~  \tfrac12 \, \zeta^{z_2}_\Gc(w,\tau) + \tfrac12\, \zeta^{z_3}_\Gc(w,\tau) +2^N \,\zeta^{z_1}_\Gc(w,\tau) \ ,
\nonumber\\
\tau^{-2r} \,\zeta^{z_3}_\Gc(w,\tfrac{-1}\tau)
~&=~   \tfrac12 \,\zeta^{z_2}_\Gc(w,\tau) + \tfrac12\, \zeta^{z_3}_\Gc(w,\tau) -2^N \,\zeta^{z_1}_\Gc(w,\tau)  \ ,
\end{align}
where the factor $\tau^{-2r}$ is the prefactor in \eqref{eq:modxfer} for $w \in (\VOA_\mathrm{ev})_{[2r+s]}$ (the above 
	pseudo-trace functions
can be non-zero only for $s=0$). This is in agreement with $\tilde S_{Z_P}$ in 
Lemma~\ref{lem:tildeS-trans-SF}.

For $z \in Z_\Lambda$ the pseudo-trace function becomes
\begin{align}
\zeta^{z}_\Gc(w,\tau) ~=~ 
\tfrac12 \,(2 \pi i)^{-r}
\tfrac{\partial}{\partial\tau_{l_1}} \cdots \tfrac{\partial}{\partial\tau_{l_r}}
\Big\{ & \eps\big(z\K\alpha^{j_1} \cdots \alpha^{j_s} \, {\textstyle \prod_{j=1}^N} (1 + 2 \pi i \gamma_j \tau_j)\big)
\nonumber \\
& \qquad
\times
{\textstyle \prod_{j=1}^N} \, \chi^{N=1}_{ns,-}(\tau_j)
 \Big\}\Big|_{\tau_j = \tau} \ .
 \label{eq:pseudo-for-ZLam}
\end{align}
where we used that $\exp(2 \pi i \gamma_j \tau_j) = 1 + 2 \pi i \gamma_j \tau_j$.
We note that $\zeta^{c_{\one}}_\Gc(w,\tau)$  and $\zeta^{c_{\Pi\one}}_\Gc(w,\tau)$ for the central elements from~\eqref{eq:varphiU-SF} are equal to $\zeta^{\id^*}_{\VOA_{\mathrm{ev}}}(w,\tau)$ and $\zeta^{\id^*}_{\VOA_{\mathrm{odd}}}(w,\tau)$ correspondingly. We thus conclude that $\varphi_{\one}=\eps(c_{\one} \cdot -)$ and $\varphi_{\Pi\one}=\eps(c_{\Pi\one} \cdot -)$ which agrees with~\eqref{eq:varphiU-SF}.

The $S$-transformation of~\eqref{eq:pseudo-for-ZLam} is
\begin{align}
\tau^{-2r-s} \,\zeta^{z}_\Gc(w,\tfrac{-1}\tau)
~=~ \tau^{-s}
\tfrac12 \,(2 \pi i)^{-r}
\tfrac{\partial}{\partial\tau_{l_1}} \cdots \tfrac{\partial}{\partial\tau_{l_r}}
\Big\{ & \eps\big(z\K\alpha^{j_1} \cdots \alpha^{j_s} \,
 {\textstyle \prod_{j=1}^N} (- i \tau_j - 2 \pi  \gamma_j )
 \big)
\nonumber \\
& \qquad
\times
{\textstyle \prod_{j=1}^N} \, \chi^{N=1}_{ns,-}(\tau_j)
 \Big\}\Big|_{\tau_j = \tau} \ .
\end{align}
We now need to assume that Conjecture~\ref{conj:SF-ZE=C1V} holds (i.e.\ that 
$\xi_\Gc : C(\Ec) \xrightarrow{\sim} C_1(\VOA_\mathrm{ev})$).\footnote{
If we do not assume that 
$\xi_\Gc$
 is an isomorphism, then the calculation giving $y$ shows the identity \eqref{eq:SF-Strans-Z_Lam_ydef} for all $w$ of the form \eqref{eq:w-insertion-pt}. However, we cannot exclude that outside of such $w$ the two sides of \eqref{eq:SF-Strans-Z_Lam_ydef} start to differ. By Lemma~\ref{lem:xi-inj-minus1-modes} this can only happen if $\tau^{-2r-s} \,\zeta^{z}_\Gc(w,\tfrac{-1}\tau)$ is a torus 1-point function which is not in the image of $\xi_\Gc$.
 This possibility is excluded by Conjecture~\ref{conj:SF-ZE=C1V}.
}
Then by Theorem \ref{thm:C1-mod-inv} and Lemma \ref{lem:xi-inj-minus1-modes}, 
there exists a unique $y \in Z(\Ec)$ such that for all $w$ of the form \eqref{eq:w-insertion-pt} and $\tau \in \mathbb{H}$,
\be\label{eq:SF-Strans-Z_Lam_ydef}
	\zeta^{y}_\Gc(w,\tau) ~=~ \tau^{-2r-s} \,\zeta^{z}_\Gc(w,\tfrac{-1}\tau) \ .
\ee
We will derive a series of necessary conditions from this equation which will ultimately determine $y$ uniquely. By the above argument, this $y$ then automatically solves \eqref{eq:SF-Strans-Z_Lam_ydef}.

When comparing the coefficients of $ \chi^{N=1}_{ns,-}(\tau)^{N}$ in \eqref{eq:SF-Strans-Z_Lam_ydef}, i.e.\ the case where no $\tau$-derivatives act on the characters, one finds that \eqref{eq:SF-Strans-Z_Lam_ydef} implies
\begin{align}
&
 \tau^{s} \,\eps\Big(y\K\alpha^{j_1} \cdots \alpha^{j_s} 
 \hspace{-1em} \prod_{j\in\{l_1,\dots,l_r\}} \hspace{-.8em}
 (2 \pi i \gamma_j)
 \hspace{-.5em} \prod_{j\notin\{l_1,\dots,l_r\}}  \hspace{-.5em}(1 + 2 \pi i \gamma_j \tau) \Big)
 \nonumber\\
&\quad =~ 
 \eps\Big(z\K\alpha^{j_1} \cdots \alpha^{j_s} \,
   (- i)^r \,
  \hspace{-.6em}\prod_{j\notin\{l_1,\dots,l_r\}}\hspace{-.8em}
  (- i \tau - 2 \pi  \gamma_j)
  \Big)
 \ .
\end{align}
In this expression in turn, we compare the coefficient of $\tau^{s}$. 
To write the resulting identity, let $[k] \in \{1,\dots,N\}$ denote the pair that $k \in \{1,\dots,2N\}$ belongs to, i.e.\ $k \in \{ 2[k]-1,2[k] \}$. We obtain
\be\label{eq:y-nec-cond}
 \eps\Big(y\K\alpha^{j_1} \cdots \alpha^{j_s} \,
 \gamma_{l_1} \cdots \gamma_{l_r}
 \Big)
~=~ 
 \eps\Big(z\K\alpha^{j_1} \cdots \alpha^{j_s} \,
   (- i)^{s} \, (-2\pi)^{N-2r-s}
  \hspace{-2em}\prod_{j\notin\{l_1,\dots,l_r,[j_1],\dots,[j_s]\}}\hspace{-2em}
  \gamma_j
  \Big)
 \ .
\ee
Requiring this identity for all $r,s$ and $j_1,\dots,j_s$, $l_1,\dots,l_r$ determines $y$ uniquely. In particular, we see that necessarily $y \in Z_\Lambda$. To compute $y$ explicitly, let $z \in Z_\Lambda$ be given by
\be
	z = \gamma_{k_1} \cdots \gamma_{k_p} \alpha^{i_1} \cdots \alpha^{i_q} \ . \ee
To establish the formula for $\tilde S_{Z_\Lambda}$ in Lemma \ref{lem:tildeS-trans-SF}, one needs to verify that the unique $y$ corresponding to this $z$ is given by
\be
	y = (-2 \pi)^{N-2p-q}\, (-i)^q \, \alpha^{i_1} \cdots \alpha^{i_q}
	\hspace{-2em}\prod_{j\notin\{k_1,\dots,k_p,[i_1],\dots,[i_q]\}}\hspace{-2em}
  \gamma_j \ .
\ee
That the above pair of $z$ and $y$ indeed solves \eqref{eq:y-nec-cond} for all choices of $r,s$ and $j_1,\dots,j_s$, $l_1,\dots,l_r$ can now be checked by direct calculation.

\medskip

This completes the proof of Lemma \ref{lem:tildeS-trans-SF}.

\newcommand\arxiv[2]      {\href{http://arXiv.org/abs/#1}{#2}}
\newcommand\doi[2]        {\href{http://dx.doi.org/#1}{#2}}
\newcommand\httpurl[2]    {\href{http://#1}{#2}}


\begin{thebibliography}{YYY0}
\setlength{\itemsep}{-0.1em}
\small

\bibitem[AA]{Abe:2011ab}
T.~Abe, Y.~Arike,
{\it Intertwining operators and fusion rules for vertex operator algebras arising from symplectic fermions},
\doi{10.1016/j.jalgebra.2012.09.022}{J.\ Algebra {\bf 373} (2013) 39--64}
\arxiv{1108.1823}{[1108.1823 [math.QA]]}

\bibitem[Ab]{Abe:2005}
T.~Abe,
{\it A $\Zb_2$-orbifold model of the symplectic fermionic vertex operator superalgebra},
\doi{10.1007/s00209-006-0048-5}{Mathematische Zeitschrift {\bf 255}  (2007) 755--792}
\arxiv{math/0503472}{[math.QA/0503472]}.

\bibitem[AM]{AM09}
D.~Adamovic, A.~Milas, 
{\it An analogue of modular BPZ-equation
in logarithmic (super)conformal field theory}, in M.~Bergvelt, G.~Yamskulna, W.~Zhao, (eds.) Vertex operator algebras and related areas,
\doi{10.1090/conm/497}{Contemp.\ Math.\ {\bf 497} (2009) 1--17}.

\bibitem[AN]{Arike:2011ab}
Y.~Arike, K.~Nagatomo,
{\it Some remarks on pseudo-trace functions for orbifold models associated with symplectic fermions},
\doi{10.1142/S0129167X13500080}{Int.\ J.\ Math.\ {\bf 24} (2013) 1350008}
\arxiv{1104.0068}{[1104.0068 [math.QA]]}.

\bibitem[Ar]{Arike:2010}
Y.~Arike,
{\it Some remarks on symmetric linear functions and pseudotrace maps},
\doi{10.3792/pjaa.86.119}{Proc.\ Japan Acad.\ Ser.\ A Math.\ Sci.\ {\bf 86} 
(2010) 119--124}
\arxiv{1001.2696}{1001.2696 [math.RA]}.

\bibitem[BDSPV]{Bartlett:2015baa}
  B.~Bartlett, C.L.~Douglas, C.J.~Schommer-Pries, J.~Vicary,
  {\it Modular categories as representations of the 3-dimensional bordism 2-category},
  \arxiv{1509.06811}{1509.06811 [math.AT]}.
    
\bibitem[Bro]{Broue:2009}
M.~Brou\'e,
{\it Higman's criterion revisited},
\doi{10.1307/mmj/1242071686}{Michigan Math.\ J.\ {\bf 58} (2009) 125--179}. 

\bibitem[Bru]{Bruguieres:2000}
A.~Brugui\`eres,
  {\it Cat\'egories pr\'emodulaires, modularisations et invariants des vari\'et\'es de dimension 3},
  \doi{10.1007/s002080050011}{Math.\ Annal.\ {\bf 316} (2000) 215--236}. 

\bibitem[CG]{Creutzig:2016fms}
  T.~Creutzig, T.~Gannon,
  {\it Logarithmic conformal field theory, log-modular tensor categories and modular forms},
\arxiv{1605.04630}{1605.04630 [math.QA]}.

\bibitem[CW]{Cohen:2008}
M.~Cohen, S.~Westreich,
{\it Characters and a Verlinde-type formula for symmetric Hopf algebras},
\doi{10.1016/j.jalgebra.2008.08.025}{J.\ Algebra {\bf 320} (2008) 4300--4316}.

\bibitem[DLM1]{Dong:1995}
  C.~Dong, H.~Li, G.~Mason,
  {\it Twisted representations of vertex operator algebras},
  \doi{10.1007/s002080050161}{Math.\ Ann.\ {\bf 310} 571--600}
  \arxiv{q-alg/9509005}{[q-alg/9509005]}.
  
\bibitem[DLM2]{Dong:1997ea}
  C.~Dong, H.~Li, G.~Mason,
  {\it Modular invariance of trace functions in orbifold theory},
  \doi{10.1007/s002200000242}{Commun.\ Math.\ Phys.\  {\bf 214} (2000) 1--56}
  \arxiv{q-alg/9703016}{[q-alg/9703016]}.
  
\bibitem[DR1]{Davydov:2012xg}
 A.~Davydov, I.~Runkel,
 {\it $\Zb/2\Zb$-extensions of Hopf algebra module categories by their base categories},
 \doi{10.1016/j.aim.2013.06.024}{Adv.\ Math.\ {\bf 247} (2013) 192--265}
 \arxiv{1207.3611}{[1207.3611 [math.QA]]}.


\bibitem[DR2]{Davydov:2016euo}
A.~Davydov, I.~Runkel,
{\it Holomorphic Symplectic Fermions},
\doi{10.1007/s00209-016-1734-6}{Mathematische Zeitschrift {\bf 285}
(2017) 967--1006}
\arxiv{1601.06451}{[1601.06451 [math.QA]]}.
  
\bibitem[Dri]{Drinfeld} V.G.~Drinfeld, \textit{On Almost Cocommutative
    Hopf Algebras}, Leningrad Math.\ J. 1 (1990) No.2, 321--342.

\bibitem[EGNO]{EGNO-book}
P.I.~Etingof, S.~Gelaki, D.~Nikshych, V.~Ostrik, 
{\it Tensor categories},
Math.\ Surveys and Monographs {\bf 205}, AMS, 2015.

\bibitem[ENO]{Etingof:2004}
P.I.~Etingof, D.~Nikshych, V.~Ostrik, 
{\it An analogue of Radford's $S^4$ formula for finite tensor categories},
\doi{10.1155/S1073792804141445}{Int.\ Math.\ Res.\ Notices {\bf 54} (2004) 2915--2933}
\arxiv{math/0404504}{[math.QA/0404504]}.
  
\bibitem[EO]{Etingof:2003}
P.I.~Etingof, V.~Ostrik, 
{\it Finite tensor categories},
Moscow Math.\ J.\ {\bf 4} (2004), 627--654,
\arxiv{math/0301027}{[math.QA/0301027]}.

\bibitem[FG]{Flohr:2005cm}
  M.~Flohr, M.~R.~Gaberdiel,
  {\it Logarithmic torus amplitudes},
  \doi{10.1088/0305-4470/39/8/012}{J.\ Phys.\ A {\bf 39} (2006) 1955--1968}
  \arxiv{hep-th/0509075}{[hep-th/0509075]}.
  

\bibitem[FGR1]{FGR1}
V.~Farsad, A.M.~Gainutdinov, I.~Runkel,
{\it $SL(2,\Zb)$-action for ribbon quasi-Hopf algebras},
\arxiv{1702.01086}{1702.01086 [math.QA]}.

\bibitem[FGR2]{FGR2}
V.~Farsad, A.M.~Gainutdinov, I.~Runkel,
{\it The symplectic fermion ribbon quasi-Hopf algebra and the $SL(2,\Zb)$-action on its centre},
\arxiv{1706.08164}{1706.08164 [math.QA]}.

\bibitem[FGST1]{[FGST]}
  B.L.~Feigin, A.M.~Gainutdinov, A.M.~Semikhatov, I.Y.~Tipunin,
  \textit{Modular group representations and fusion in logarithmic conformal field theories and in the quantum group center},
  \doi{10.1007/s00220-006-1551-6}{Commun.\ Math.\ Phys.\  {\bf 265} (2006) 47--93}
  \arxiv{hep-th/0504093}{[hep-th/0504093]}.

\bibitem[FGST2]{Feigin:2005xs}
B.L.~Feigin, A.M.~Gainutdinov, A.M.~Semikhatov, I.Y.~Tipunin,
{\it Kazhdan--Lusztig correspondence for the representation category
of the triplet W-algebra in logarithmic conformal field theory},
\doi{10.1007/s11232-006-0113-6}{Theor.\ Math.\ Phys.\ {\bf 148} (2006) 1210--1235}
\arxiv{math.qa/0512621}{[math.QA/0512621]}.

\bibitem[FHST]{Fuchs:2003yu}
  J.~Fuchs, S.~Hwang, A.M.~Semikhatov, I.Y.~Tipunin,
  {\it Nonsemisimple fusion algebras and the Verlinde formula},
  \doi{10.1007/s00220-004-1058-y}{Commun.\ Math.\ Phys.\  {\bf 247} (2004) 713--742}
  \arxiv{hep-th/0306274}{[hep-th/0306274]}.

\bibitem[FK]{Flohr:2007jy}
  M.~Flohr, H.~Knuth,
  {\it On Verlinde-Like Formulas in c(p,1) Logarithmic Conformal Field Theories},
  \arxiv{0705.0545}{0705.0545 [math-ph]}.

\bibitem[Fl]{Flohr:2001zs}
  M.~Flohr,
  {\it Bits and pieces in logarithmic conformal field theory},
  \doi{10.1142/S0217751X03016859}{Int.\ J.\ Mod.\ Phys.\ A {\bf 18} (2003) 4497}
  \arxiv{hep-th/0111228}{[hep-th/0111228]}.
  
\bibitem[FS1]{Fuchs:2010mw}
  J.~Fuchs, C.~Schweigert,
  {\it Hopf algebras and finite tensor categories in conformal field theory},
  \httpurl{inmabb.criba.edu.ar/revuma/revuma.php?p=toc/vol51}{Rev.\ Union Mat.\ Argentina {\bf 51} (2010) 43--90}
  \arxiv{1004.3405}{[1004.3405 [hep-th]]}.

\bibitem[FS2]{Fuchs:2016wjr}
  J.~Fuchs, C.~Schweigert,
  {\it Consistent systems of correlators in non-semisimple conformal field theory},
  \arxiv{1604.01143}{1604.01143 [math.QA]}.
  
\bibitem[FS3]{Fuchs:2016whb}
  J.~Fuchs, C.~Schweigert,
  {\it Coends in conformal field theory},
  \arxiv{1604.01670}{1604.01670 [math.QA]}.

\bibitem[FSS]{Fuchs:2013lda}
  J.~Fuchs, C.~Schweigert, C.~Stigner,
  {\it From non-semisimple Hopf algebras to correlation functions for logarithmic CFT},
  \doi{10.1088/1751-8113/46/49/494008}{J.\ Phys.\ A {\bf 46} (2013) 494008}
  \arxiv{1302.4683}{[1302.4683 [hep-th]]}.
 
\bibitem[Fu]{Fuchs:2006nx}
  J.~Fuchs,
  {\it On non-semisimple fusion rules and tensor categories},
  \doi{10.1090/conm/442}{Contemp.\ Math.\ {\bf 442} (2007) 315--337}
  \arxiv{hep-th/0602051}{[hep-th/0602051]}.
  
\bibitem[GK1]{Gaberdiel:1996kx}
M.R.~Gaberdiel, H.G.~Kausch,
{\it Indecomposable fusion products},
\doi{10.1016/0550-3213(96)00364-1}{Nucl.\ Phys.\ B {\bf 477} (1996) 293--318}
\arxiv{hep-th/9604026}{[hep-th/9604026]}.
  
\bibitem[GK2]{Gaberdiel:1996np}
  M.R.~Gaberdiel, H.G.~Kausch,
  {\it A rational logarithmic conformal field theory},
  \doi{10.1016/0370-2693(96)00949-5}{Phys.\ Lett.\  B {\bf 386} (1996) 131--137}
  \arxiv{hep-th/9606050}{[hep-th/9606050]}.

\bibitem[GN]{Gaberdiel:2000qn}
  M.R.~Gaberdiel, A.~Neitzke,
  {\it Rationality, quasirationality and finite W-algebras},
  \doi{10.1007/s00220-003-0845-1}{Commun.\ Math.\ Phys.\  {\bf 238} (2003) 305--331}
  \arxiv{hep-th/0009235}{[hep-th/0009235]}.
  
\bibitem[GlR]{Gaberdiel:2007jv}
M.R.~Gaberdiel, I.~Runkel,
{\it From boundary to bulk in logarithmic CFT},
\doi{10.1088/1751-8113/41/7/075402}{J.\ Phys.\ A {\bf 41} (2008) 075402}
\arxiv{0707.0388}{[0707.0388[hep-th]]}.

\bibitem[GvR]{Gainutdinov:2015lja}
A.M.~Gainutdinov, I.~Runkel,
{\it Symplectic fermions and a quasi-Hopf algebra structure on $\overline{U}_{\mathrm{i}} s\ell(2)$},
\doi{10.1016/j.jalgebra.2016.11.026}{J.\ Algebra {\bf 476} (2017) 415--458}
\arxiv{1503.07695}{[1503.07695 [math.QA]]}.

  
\bibitem[GT]{Gainutdinov:2007tc}
A.M.~Gainutdinov, I.Y.~Tipunin,
{\it Radford, Drinfeld, and Cardy boundary states in (1,p) logarithmic conformal field models},
\doi{10.1088/1751-8113/42/31/315207}{J.\ Phys.\ A {\bf 42} (2009) 315207}
\arxiv{0711.3430}{[0711.3430[hep-th]]}.

\bibitem[Gu]{Gurarie:1993xq}
V.~Gurarie,
{\it Logarithmic operators in conformal field theory},
\doi{10.1016/0550-3213(93)90528-W}{Nucl.\ Phys.\ B {\bf 410} (1993) 535--549}
\arxiv{hep-th/9303160}{[hep-th/9303160]}.  



\bibitem[Ha]{Hattori:1965}
A.~Hattori,
{\em Rank element of a projective module},
\httpurl{projecteuclid.org/euclid.nmj/1118801428}{Nagoya Math.\ J.\ {\bf 25} (1965) 113--120}.

\bibitem[HLZ]{Huang:2010}
Y.-Z.~Huang, J.~Lepowsky, L.~Zhang,
{\it Logarithmic tensor category theory for generalized modules for a conformal vertex algebra, I--VIII},
\arxiv{1012.4193}{1012.4193},
\arxiv{1012.4196}{1012.4196},
\arxiv{1012.4197}{1012.4197},
\arxiv{1012.4198}{1012.4198},
\arxiv{1012.4199}{1012.4199},
\arxiv{1012.4202}{1012.4202},
\arxiv{1110.1929}{1110.1929},
\arxiv{1110.1931}{1110.1931}.

\bibitem[Hu1]{Huang:2004bn}
Y.-Z.~Huang,
{\it Vertex operator algebras and the Verlinde conjecture},
\doi{10.1142/S0219199708002727}{Commun.\ Contemp.\ Math.\ {\bf 10} (2008) 103}
\arxiv{math/0406291}{[math.QA/0406291]}.
  
\bibitem[Hu2]{Huang2005}
Y.-Z.~Huang, 
{\it Rigidity and modularity of vertex tensor categories}, 
\doi{10.1142/S0219199708003083}{Commun.\ Contemp.\ Math.\ {\bf 10} (2008) 871--911}
\arxiv{math/0502533}{[math.QA/0502533]}.

\bibitem[Hu3]{Huang:2007mj}
  Y.-Z.~Huang,
  {\it Cofiniteness conditions, projective covers and the logarithmic tensor product theory},
  \doi{10.1016/j.jpaa.2008.07.016}{J.\ Pure and Appl.\ Algebra {\bf 213} (2009) 458--475}
  \arxiv{0712.4109}{[0712.4109 [math.QA]]}.

\bibitem[Hu4]{Huang:2009xq}
  Y.-Z.~Huang,
  {\it Representations of vertex operator algebras and braided finite tensor categories},
  \doi{10.1090/conm/497}{Contemp.\ Math.\ {\bf 497} (2009) 97--112}
  \arxiv{0903.4233}{[0903.4233 [math.QA]]}.
  
\bibitem[Kau]{Kausch:1995py}
  H.G.~Kausch,
  {\it Curiosities at c = -2},
  \arxiv{hep-th/9510149}{hep-th/9510149}.

\bibitem[KL]{Kerler:2001}
T.~Kerler, V.V.~Lyubashenko,
{\it Non-Semisimple Topological Quantum Field Theories for 3-Manifolds with Corners},
\doi{10.1007/b82618}{Springer Lecture Notes in Mathematics {\bf 1765} (2001)}.

\bibitem[Ly1]{Lyubashenko:1995}
V.V.~Lyubashenko,
{\it Modular transformations for tensor categories},
\doi{10.1016/0022-4049(94)00045-K}{J.\ Pure and Appl.\ Algebra {\bf 98} (1995) 279--327}.

\bibitem[Ly2]{Lyubashenko:1994tm}
  V.V.~Lyubashenko,
  {\it Invariants of three manifolds and projective representations of mapping class groups via quantum groups at roots of unity},
  \doi{10.1007/BF02101805}{Commun.\ Math.\ Phys.\  {\bf 172} (1995) 467--516}
  \arxiv{hep-th/9405167}{[hep-th/9405167]}.
  
\bibitem[Me]{Meir-priv}
E.~Meir, private communication.

\bibitem[Miy]{Miyamoto:2002ar}
M.~Miyamoto,
{\it Modular invariance of vertex operator algebras satisfying C(2) cofiniteness},
\doi{10.1215/S0012-7094-04-12212-2}{Duke Math.\ J.\ {\bf 122} (2004) 51--91}
\arxiv{math/0209101}{[math.QA/0209101]}.

\bibitem[MS]{Moore:1989vd} 
G.W.~Moore, N.~Seiberg,
\href{http://www.physics.rutgers.edu/~gmoore/LecturesRCFT.pdf}{\it Lectures on RCFT}, in ``Strings '89'',
proceedings of the Trieste Spring School on Superstrings,
M.\ Green, et.\ al.\ Eds.\ (World Scientific, 1990).

\bibitem[M\"u]{Muger2001b}
  M.~M\"uger,
  {\it From subfactors to categories and topology II.\ The quantum double of tensor categories and subfactors},
  \doi{10.1016/S0022-4049(02)00248-7}{J.\ Pure Appl.\ Alg.\ {\bf 180} (2003) 159--219}
  \arxiv{math/0111205}{[math.CT/0111205]}.

\bibitem[NT]{Nagatomo:2009xp}
K.~Nagatomo, A.~Tsuchiya,
{\it The triplet vertex operator algebra $W(p)$ and the restricted quantum group at root of unity}, in:
Adv.\  Studies in Pure Math.\ {\bf 61} (2011), 
``Exploring new structures and natural constructions in mathematical physics'',
K.~Hasegawa et al.\ (eds.)
\arxiv{0902.4607}{[0902.4607 [math.QA]]}.

\bibitem[PRR]{Pearce:2009pg}
  P.A.~Pearce, J.~Rasmussen, P.~Ruelle,
{\it Grothendieck ring and Verlinde formula for the W-extended logarithmic minimal model WLM(1,p)},
\doi{10.1088/1751-8113/43/4/045211}{J.\ Phys.\ A {\bf 43} (2010) 045211}
\arxiv{0907.0134}{[0907.0134 [hep-th]]}.

\bibitem[RS]{Rozansky:1992rx}
L.~Rozansky, H.~Saleur,
{\it Quantum field theory for the multivariable Alexander-Conway polynomial},
\doi{10.1016/0550-3213(92)90118-U}{Nucl.\ Phys.\ B {\bf 376} (1992) 461--509}.

\bibitem[RT]{retu}
N.~Reshetikhin, V.G.~Turaev,
{\it Invariants of 3-manifolds via link polynomials and quantum groups},
\doi{10.1007/BF01239527}{Inv.\ Math.\ {\bf 103} (1991) 547--597}.

\bibitem[Ru]{Runkel:2012cf}
 I.~Runkel,
 {\it A braided monoidal category for free super-bosons},
 \doi{10.1063/1.4868467}{J.\ Math.\ Phys.\ {\bf 55} (2014) 041702}
 \arxiv{1209.5554}{[1209.5554 [math.QA]]}.

\bibitem[RW]{Ridout:2014yfa}
  D.~Ridout, S.~Wood,
  {\it The Verlinde formula in logarithmic CFT},
  \doi{10.1088/1742-6596/597/1/012065}{J.\ Phys.\ Conf.\ Ser.\ {\bf 597} (2015) 012065}
  \arxiv{1409.0670}{[1409.0670 [hep-th]]}.
  
\bibitem[Sh1]{Shimizu:2014}
K.~Shimizu,
{\it On unimodular finite tensor categories},
\doi{10.1093/imrn/rnv394}{Int.\ Math.\ Res.\ Notices (2016) online first}
\arxiv{1402.3482}{[1402.3482 [math.QA]]}.

\bibitem[Sh2]{Shimizu:2015}
K.~Shimizu,
{\it The monoidal center and the character algebra},
\arxiv{1504.01178}{1504.01178 [math.QA]}.

\bibitem[Sh3]{Shimizu:2016}
K.~Shimizu,
{\it Non-degeneracy conditions for braided finite tensor categories},
\arxiv{1602.06534}{1602.06534 [math.QA]}.

\bibitem[St]{Stallings:1965}
J.~Stallings,
{\it Centerless groupsÃÂÃÂÃÂÃÂ¢ÃÂÃÂÃÂÃÂÃÂÃÂÃÂÃÂan algebraic formulation of GottliebÃÂÃÂÃÂÃÂ¢ÃÂÃÂÃÂÃÂÃÂÃÂÃÂÃÂs theorem},
\doi{10.1016/0040-9383(65)90060-1}{Topology {\bf 4} (1965) 129--134}.


\bibitem[T]{tur} V.G.~Turaev, 
{\it Quantum Invariants of Knots and 3-Manifolds},
de Gruyter, 1994.

\bibitem[V]{Verlinde:1988sn}
E.P.~Verlinde, {\it Fusion rules and modular transformations in 2D
conformal field theory}, \doi{10.1016/0550-3213(88)90603-7}{Nucl.\
Phys.\ B {\bf 300} (1988) 360--376}.

\bibitem[W]{Witten:1988hf}
E.~Witten,
  {\it Quantum field theory and the Jones polynomial},
  \doi{10.1007/BF01217730}{Commun.\ Math.\ Phys.\  {\bf 121} (1989) 351--399}.

\bibitem[Z]{Zhu1996}
Y.-C.~Zhu, 
{\it Modular invariance of vertex operator algebras}, 
\doi{10.1090/S0894-0347-96-00182-8}{J.\ Amer.\ Math.\ Soc.\ {\bf 9} (1996) 237--302}.

\end{thebibliography}
\end{document}